\documentclass[12 pt]{amsart}

\usepackage{amssymb}
\usepackage{amsbsy}
\usepackage{amscd}
\usepackage{amsmath}
\usepackage{amsthm}

\usepackage{bm}


\newtheorem{theorem}{Theorem}[section]
\newtheorem{defn}[theorem]{Definition}
\newtheorem{lem}[theorem]{Lemma}
\newtheorem{prop}[theorem]{Proposition}
\newtheorem{rem}[theorem]{Remark}
\newtheorem{example}[theorem]{Example}
\newtheorem{corollary}[theorem]{Corollary}
\newtheorem{pbm}[theorem]{Problem}
\newtheorem{conj}[theorem]{Conjecture}



\newcommand{\codim}{\operatorname{codim}}
\newcommand{\Coker}{\operatorname{Coker}}
\newcommand{\Der}{\operatorname{Der}}
\newcommand{\Hilb}{\operatorname{Hilb}}
\newcommand{\Image}{\operatorname{Im}}
\newcommand{\HOM}{\operatorname{{\mathcal H}om}}

\newcommand{\pdeg}{\operatorname{pdeg}}

\newcommand{\Proj}{\operatorname{Proj}}
\newcommand{\Real}{\operatorname{Re}}

\newcommand{\A}{\mathcal{A}}
\newcommand{\calC}{\mathcal{C}}
\newcommand{\calE}{\mathcal{E}}
\newcommand{\calF}{\mathcal{F}}

\newcommand{\calM}{\mathcal{M}}
\newcommand{\calO}{\mathcal{O}}

\newcommand{\bC}{\mathbb{C}}
\newcommand{\bF}{\mathbb{F}}

\newcommand{\bP}{\mathbb{P}}
\newcommand{\bQ}{\mathbb{Q}}
\newcommand{\bR}{\mathbb{R}}
\newcommand{\bZ}{\mathbb{Z}}

\newcommand{\m}{{\bf m}}



\author[M. Yoshinaga]{Masahiko Yoshinaga}
\title[Free Arrangements]{Freeness of hyperplane arrangements and related topics}
\address{Department of Mathematics, Hokkaido University, 
N 10 W 8 Sapporo 060-0810, Japan}
\curraddr{}
\email{yoshinaga@math.sci.hokudai.ac.jp}


\begin{document}

\maketitle

\begin{abstract}

These are the expanded notes of the lecture by the author in 
``Arrangements in Pyr\'en\'ees'', June 2012. 
We are discussing relations of freeness 
and splitting problems of vector bundles, several techniques 
proving freeness of hyperplane arrangements, 
K. Saito's theory of primitive derivations for 
Coxeter arrangements, their application to combinatorial problems 
and related conjectures. 
\end{abstract}

\addtocounter{section}{-1}
\section{Introduction}


Roughly speaking, there are two kind of objects in mathematics: 
general objects and specialized objects. In the study of 
general objects, individual objects are 
not so important, the totality of 
general objects is rather interesting (e.g. stable algebraic curves 
and moduli spaces). On the other hand, specialized objects are 
isolated, tend to be studied individually. 

Let us fix a manifold (algebraic, complex analytic, whatever) $X$. 
Then the divisors on $X$ are general objects. In 1970's Kyoji Saito 
\cite{sai-log} introduced the notion of {\em free divisors} with the 
motivation to compute Gauss-Manin connections for universal 
unfolding of isolated singularities. It was proved that the 
discriminant in the parameter space of the universal unfolding 
is a free divisor. 
Free divisors are specialized objects. 
The discriminant for a simple singularity is 
obtained as a quotient of the union of hyperplanes of finite 
reflection group, which implies that the union of reflecting 
hyperplanes (Coxeter arrangement) is also a free divisor (free 
arrangement). He also studied Coxeter arrangements in terms 
of invariant theory and found deep structures related to freeness. 
This has made deep impact on combinatorics of Coxeter arrangements 
(which is summarized in \S\ref{sec:prim}). Subsequently 
Terao developed basic techniques and laid the foundations of 
the theory of free arrangements. Now this becomes a rich area 
which is related to combinatorics and algebraic geometry. 

The purpose of this article is to survey the aspects of 
free arrangements. 
\S\ref{sec:spl} is devoted to describe techniques proving 
freeness. 
In early days, the freeness of arrangements was studied 
mainly from algebraic and combinatorial view point. 
It was pointed out by Silvotti \cite{sil-poi} and Schenck 
\cite{sch-rk2, mus-sch} that the freeness 
is equivalent to splitting of a reflexive 
sheaf on the projective space $\bP^n$ into sum of line bundles 
(``splitting problem''). This point of view has been a source of 
ideas of several recent studies on freeness of arrangements. 
The importance of multiarrangements emerged in these researches, and 
the general theory of free multiarrangements has been developed in the 
last decade. 
We are trying to depict them in \S\ref{sec:spl}. 

In \S\ref{sec:prim}, we summarize the theory of primitive derivation 
for Coxeter arrangements. 
I recommend \cite{sai-orb} for full details. 

\S\ref{sec:appl} is devoted to the problems concerning 
truncated affine Weyl arrangements. As an application of 
results in previous sections, it is proved that 
the so-called extended Catalan and extended Shi arrangements 
are free, which has also combinatorial consequences 
via Terao's factorization theorem \cite{ter-fact} and Solomon-Terao's 
formula \cite{st-stf}. 
This settled the conjecture by Edelman and Reiner \cite{ede-rei}. 
We also try to convince that some 
open problems (including ``Riemann Hypothesis'' by Postnikov and 
Stanley \cite{ps-def}) seems to be related to the algebraic structures 
studied in \S\ref{sec:prim}. 

\medskip

The author would like to thank Takuro Abe, Daniele Faenzi and 
Michele Torielli 
for comments and sharing unpublished ideas concerning this article. 
He also thanks organizers of the school 
``Arrangements in Pyr\'en\'ees'', Pau, June 2012.

\section{Splitting v.s. Freeness}
\label{sec:spl}

In this section, we discuss relations of freeness of divisors 
(especially hyperplane arrangements) and splitting problems of 
vector bundles. We emphasize parallelism and subtle 
differences. 

\subsection{Splitting problems}

First let us recall the correspondence between graded modules 
and coherent sheaves on the projective space. 
(Basic reference is \cite[Chap II \S5]{har-ag}) 
Let $S=\bC[x_1, x_2, \dots, x_\ell]$ be the polynomial ring 
and $\bP_\bC^{\ell-1}=\Proj S$ the projective $(\ell-1)$-space (denote 
$\bP^{\ell-1}$ for simplicity). $\bP^{\ell-1}$ 
is covered by open subsets $U_{x_i}$ ($i=1, \dots, \ell$), 
where $U_{x_i}$ is an open subset defined by $\{x_i\neq 0\}$. 
Let $M$ be a graded $S$-module. 
Then $M$ induces a sheaf $\widetilde{M}$ on $\bP^{\ell-1}$, with sections 
$$
\Gamma(U_{x_i}, \widetilde{M})=(M_{x_i})_0, 
$$
where $M_{x_i}=M\otimes_S S[\frac{1}{x_i}]$ is the localization by 
$x_i$ and $(-)_d$ denotes the degree $d$ component of the graded module. 
For $k\in\bZ$, 
define the graded module $M[k]$ by shifting degrees by $k$, namely, 
$M[k]_d=M_{d+k}$. Denote $\calO=\widetilde{S}$. 
The sheaf $\widetilde{S[k]}$ is a rank one module over $\calO$, which 
is denoted by $\calO(k)$. 

Using the natural map 
$\Gamma(\bP^{\ell-1}, \calE)\times\Gamma(\bP^{\ell-1}, \calF)\longrightarrow 
\Gamma(\bP^{\ell-1}, \calE\otimes\calF)$, we can define a graded ring 
structure on $\Gamma_*(\calO):=
\bigoplus_{d\in\bZ}\Gamma(\bP^{\ell-1}, \calO(d))$, 
which is isomorphic to $S$. 
More generally, for any sheaf ($\calO$-module) $\calF$ on $\bP^{\ell-1}$, 
$$
\Gamma_*(\calF):=
\bigoplus_{d\in\bZ}\Gamma(\bP^n, \calF\otimes\calO(d))
$$
has a graded $S$-module structure. 
For a graded $S$-module $M$, $\Gamma_*(\widetilde{M})$ is expressed 
as 
\begin{equation*}
\begin{split}
\Gamma_*(\widetilde{M})&=
\{
(f_1, \dots, f_\ell)\mid 
f_i\in M_{x_i},\ 
f_i=f_j \mbox{ in }M_{x_ix_j}\}\\
&=
\{f\in M_{x_1x_2\dots x_\ell}\mid 
\exists N\gg 0, x_i^N f\in M, \forall i=1, \dots, \ell
\}. 
\end{split}
\end{equation*}
Hence there is a natural homomorphism 
$\alpha:M\longrightarrow\Gamma_*(\widetilde{M})$. 
The above map $\alpha$ is not necessarily isomorphic. 

\begin{defn}
\normalfont
A sheaf of $\calO$-modules $\calF$ on $\bP^n$ 
is said to be {\em splitting} if there exist integers 
$d_1, \dots, d_r\in\bZ$ such that 
$$
\calF\simeq \calO(d_1)\oplus\dots\oplus\calO(d_r). 
$$
(Note that if we pose $d_1\geq d_2\geq\dots\geq d_r$, the degrees 
are uniquely determined.) 
\end{defn}

Let $\calE$ be an $\calO$-module. Denote by $\calE^\vee=
\HOM_\calO(\calE, \calO)$ the dual module of $\calE$. An $\calO$-module 
$\calE$ is called {\em reflexive} if the natural map 
$\calE\longrightarrow\calE^{\vee\vee}$ is an isomorphism. 
$\calE$ is called a {\em vector bundle} if it is locally free. 

A torsion free $\calO$-module on $\bP^1$ is always splitting. 
\begin{theorem}
(Grothendieck's splitting theorem) 
Let $\calE$ be a vector bundle on $\bP^1$. Then $\calE$ is 
splitting. 
\end{theorem}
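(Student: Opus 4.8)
The plan is to prove the theorem by induction on the rank $r$ of $\calE$. The base case $r=1$ is the computation $\operatorname{Pic}(\bP^1)\cong\bZ$: every line bundle on $\bP^1$ is isomorphic to exactly one $\calO(d)$. This follows by covering $\bP^1$ with two copies of $\bA^1$ and using that the units of $\bC[t]$ and of $\bC[t,t^{-1}]$ are $\bC^\times$ and $\bC^\times\cdot t^{\bZ}$ respectively. So I would assume $r\geq 2$ and that the statement holds in rank $r-1$.

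For the inductive step, the first move is to split off a sub-line-bundle of maximal degree. Set
\begin{equation*}
D=\{a\in\bZ\mid \operatorname{Hom}_{\calO}(\calO(a),\calE)\neq 0\}=\{a\in\bZ\mid H^0(\bP^1,\calE(-a))\neq 0\}.
\end{equation*}
This set is nonempty, since $H^0(\bP^1,\calE(-a))\neq 0$ for $a\ll 0$ (Serre's theorem), and bounded above, since $H^0(\bP^1,\calE(m))=0$ for $m\ll 0$: Serre duality gives $H^0(\calE(m))^\vee\cong H^1(\calE^{\vee}(-m-2))$, which vanishes for $m\ll 0$ by Serre vanishing. (Here one uses essentially that $\calE$ is locally free.) Let $a=\max D$ and fix a nonzero $\phi\colon\calO(a)\to\calE$. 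As $\calO(a)$ has rank one and $\calE$ is torsion free, $\phi$ is injective, and its image is moreover saturated: otherwise its saturation would be a line bundle $\calO(a')$ with $a'\geq a$ (it receives a nonzero map from $\calO(a)$) embedding into $\calE$, so $a'\in D$, forcing $a'=a$. Hence there is a short exact sequence
\begin{equation*}
0\longrightarrow\calO(a)\longrightarrow\calE\longrightarrow\calG\longrightarrow 0,
\end{equation*}
with $\calG$ locally free of rank $r-1$ (a torsion free sheaf on the smooth curve $\bP^1$ is locally free). By the induction hypothesis $\calG\cong\calO(b_1)\oplus\cdots\oplus\calO(b_{r-1})$.

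The heart of the matter --- and the step I expect to be the only real obstacle --- is to prove $b_i\leq a$ for every $i$. For this, twist the sequence by $\calO(-a-1)$ and pass to the long exact cohomology sequence on $\bP^1$:
\begin{equation*}
0\to H^0(\calO(-1))\to H^0(\calE(-a-1))\to\bigoplus_i H^0(\calO(b_i-a-1))\to H^1(\calO(-1))\to\cdots.
\end{equation*}
Since $H^0(\bP^1,\calO(-1))=H^1(\bP^1,\calO(-1))=0$, the second and third terms are isomorphic; if $b_j\geq a+1$ for some $j$ then the third term is nonzero, hence so is $H^0(\calE(-a-1))$, i.e.\ $a+1\in D$, contradicting $a=\max D$. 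Granting $b_i\leq a$, one has $a-b_i\geq -1$, so $H^1(\bP^1,\calO(a-b_i))=0$, so
\begin{equation*}
\operatorname{Ext}^1_{\calO}(\calG,\calO(a))=\bigoplus_i H^1(\bP^1,\calO(a-b_i))=0,
\end{equation*}
and therefore the short exact sequence splits, giving $\calE\cong\calO(a)\oplus\calO(b_1)\oplus\cdots\oplus\calO(b_{r-1})$ and closing the induction. I would emphasize that this degree comparison is exactly where one must combine the maximality of $a$ with the vanishing $H^1(\bP^1,\calO(-1))=0$; neither $\operatorname{Pic}(\bP^n)\cong\bZ$ nor the analogous cohomology vanishing holds for $n\geq 2$, which is precisely why Grothendieck's theorem is special to the projective line --- and why the ``splitting problems'' discussed in the sequel are genuinely subtle. (An alternative, more computational route realizes $\calE$ by a transition function valued in $\operatorname{GL}_r(\bC[t,t^{-1}])$ on the overlap of the two standard charts and reduces it, by operations over $\bC[t]$ and over $\bC[t^{-1}]$, to the diagonal form $\operatorname{diag}(t^{d_1},\dots,t^{d_r})$; this is the same statement, recast as a normal form for $\operatorname{GL}_r(\bC[t])\backslash\operatorname{GL}_r(\bC[t,t^{-1}])/\operatorname{GL}_r(\bC[t^{-1}])$.)
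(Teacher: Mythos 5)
Your proof is correct. Note that the paper itself does not prove this theorem --- it is quoted as a classical fact and then used (e.g.\ in the proof of Horrocks' criterion and in the discussion of $2$-multiarrangements) --- so there is no in-text argument to compare against; what you have written is the standard induction-on-rank proof, and every step checks out. In particular the two delicate points are handled properly: you saturate the maximal-degree sub-line-bundle so that the quotient $\calG$ is again locally free, and you derive $b_i\leq a$ from the maximality of $a$ together with $H^0(\bP^1,\calO(-1))=H^1(\bP^1,\calO(-1))=0$, after which $\operatorname{Ext}^1(\calG,\calO(a))=\bigoplus_i H^1(\bP^1,\calO(a-b_i))=0$ splits the extension. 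Your closing remarks are also well taken and fit the theme of \S\ref{sec:spl}: the argument genuinely uses $\operatorname{Pic}(\bP^1)\cong\bZ$ and $H^1(\bP^1,\calO(d))=0$ for $d\geq -1$, which is exactly what fails for $n\geq 2$ and why the tangent bundle example and the Horrocks-type criteria of Theorem \ref{thm:horrocks} become necessary there; the Birkhoff-factorization reformulation you sketch is an equally valid alternative route. One tiny stylistic quibble: the paper's statement asserts splitting for \emph{torsion free} $\calO$-modules on $\bP^1$ just before the theorem, and your observation that a torsion-free coherent sheaf on a smooth curve is locally free is precisely the reduction needed to deduce that slightly stronger assertion from the vector-bundle case.
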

A vector bundle $\calE$ on $\bP^n$, with $n\geq 2$ is non-splitting 
in general. For example, 
the tangent bundle $T_{\bP^n}$ is irreducible 
rank $n$ vector bundle on $\bP^n$ for $n\geq 2$, i.e. not 
a sum of proper sub-bundles. 

Let $\calE$ be a torsion free sheaf. 
Let $H$ be a hyperplane defined by a linear form $\alpha$. 
Since $\alpha\in\Gamma(\bP^n, \calO(1))$, we have the following 
short exact sequence 
\begin{equation}
\label{eq:short}
0\longrightarrow
\calE(-1)
\stackrel{\alpha\cdot}{\longrightarrow}
\calE
\longrightarrow
\calE|_H
\longrightarrow
0. 
\end{equation}
The short exact sequence (\ref{eq:short}) plays a crucial 
role in splitting problems. 

Let $\calE$ be a rank $r$ vector bundle on $\bP^n$. 
Then 
$\det\calE:=\bigwedge^r\calE$ is a line bundle and is called 
the {\em determinant bundle}. {\em The first Chern number} 
of $\calE$ 
is the integer 
$c_1\in\bZ$ 
satisfying $\det\calE=\calO(c_1)$. 

\begin{prop}
\label{prop:easy}
Let $\calE$ be a rank $r$ vector bundle on $\bP^n$ with $n\geq 2$. 
\begin{itemize}
\item[$(i)$] 
Let $\delta_i\in\Gamma(\calE\otimes\calO(-d_i))$ for certain 
$d_i\in\bZ,\ i=1, \dots, r$. Assume that $\delta_1, \dots, \delta_r$ are 
linearly independent over rational function field (or equivalently, 
$\delta_1\wedge\dots\wedge\delta_r\in
\Gamma(\det\calE\otimes\calO(-d_1-\dots-d_r))$ is non-zero) and 
$\sum_{i=1}^rd_i=c_1(\calE)$. 
Then $\calE$ is splitting and $\calE=\bigoplus_{i=1}^r
\calO(d_i)$. 
\item[$(ii)$] 
Let $H\subset\bP^n$ be a hyperplane. If the restriction 
$\calE|_H$ to $H$ is splitting and the induced map 
$$
\Gamma_*(\calE)\longrightarrow
\Gamma_*(\calE|_H)
$$
is surjective, then $\calE$ is also splitting. 
\end{itemize}
\end{prop}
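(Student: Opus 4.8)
The plan is to prove $(i)$ directly and then deduce $(ii)$ from it.

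For $(i)$, I would first translate the data into a morphism of bundles: a section $\delta_i\in\Gamma(\calE\otimes\calO(-d_i))$ is the same as a map $\calO(d_i)\to\calE$, so the $\delta_i$ assemble into a morphism $\varphi\colon\bigoplus_{i=1}^r\calO(d_i)\longrightarrow\calE$ of rank $r$ vector bundles. Applying $\bigwedge^r$, the induced map on determinants is $\delta_1\wedge\dots\wedge\delta_r$, viewed as an element of $\Gamma(\det\calE\otimes\calO(-\textstyle\sum d_i))=\Gamma(\calO(c_1(\calE)-\sum d_i))=\Gamma(\calO)=\bC$, where the middle identification uses the hypothesis $\sum d_i=c_1(\calE)$. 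The assumed generic linear independence says this constant is nonzero, hence a unit, so $\bigwedge^r\varphi$ is an isomorphism. It then remains to observe that a morphism between vector bundles of equal rank with invertible determinant is itself an isomorphism: its cokernel is supported on the (empty) zero locus of the determinant, so $\varphi$ is surjective, and a surjection between locally free sheaves of the same rank is an isomorphism. This yields $\calE\cong\bigoplus_{i=1}^r\calO(d_i)$.

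For $(ii)$, write the splitting $\calE|_H\cong\bigoplus_{i=1}^r\calO_H(d_i)$ and let $\bar\delta_i\in\Gamma(H,(\calE\otimes\calO(-d_i))|_H)$ be the inclusion of the $i$-th summand; then $\bar\delta_1\wedge\dots\wedge\bar\delta_r$ is a nowhere-vanishing section of $\det(\calE|_H)\otimes\calO_H(-\sum d_i)$, so this line bundle on $H\cong\bP^{n-1}$ is trivial. Comparing with $(\det\calE)|_H=\det(\calE|_H)$, obtained by applying $\bigwedge^r$ to the restriction sequence (\ref{eq:short}), and using that since $n\geq 2$ a line bundle on $H$ is determined by its degree, I get $\sum_{i=1}^r d_i=c_1(\calE)$. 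Next I would invoke the surjectivity hypothesis: viewing $\bar\delta_i$ as an element of $\Gamma_*(\calE|_H)$ in degree $-d_i$, pick a lift $\delta_i\in\Gamma(\calE\otimes\calO(-d_i))$ along $\Gamma_*(\calE)\to\Gamma_*(\calE|_H)$. Finally $\delta_1\wedge\dots\wedge\delta_r\in\Gamma(\det\calE\otimes\calO(-\sum d_i))=\Gamma(\calO)=\bC$, and its restriction to $H$ equals $\bar\delta_1\wedge\dots\wedge\bar\delta_r\neq 0$; a constant section that restricts to something nonzero is nonzero, so the $\delta_i$ are linearly independent over the rational function field and part $(i)$ finishes the argument.

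The routine parts are the bookkeeping: sections versus bundle maps, compatibility of $\bigwedge^r$ with restriction, and the computation $\Gamma(\bP^n,\calO)=\bC$. The one genuinely delicate point is the independence step in $(ii)$: a priori the lifts $\delta_i$ might be linearly dependent (their wedge, though a global constant, could vanish), and what saves us is precisely that restricting back to $H$ recovers the non-degenerate frame $\{\bar\delta_i\}$, forcing the constant to be nonzero. The other place to be careful is deducing $\sum d_i=c_1(\calE)$ from the determinant, where the hypothesis $n\geq 2$ (so that $\operatorname{Pic}H\cong\bZ$) is essential.
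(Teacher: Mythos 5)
Your proposal is correct and follows essentially the same route as the paper: in $(i)$ the $\delta_i$ define a map $\bigoplus\calO(d_i)\to\calE$ whose determinant is a nonzero constant, hence an isomorphism, and in $(ii)$ one lifts a splitting frame of $\calE|_H$ via the surjectivity of $\Gamma_*(\calE)\to\Gamma_*(\calE|_H)$ and applies $(i)$ after checking the wedge restricts to something nonvanishing on $H$. Your explicit verification that $\sum d_i=c_1(\calE)$ via $\operatorname{Pic}(H)\cong\bZ$ is a detail the paper leaves implicit, but it is the same argument.
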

\begin{proof}
$(i)$ Let $\calF=\bigoplus_{i=1}^r\calO(d_i)$. Then 
$(\delta_1, \dots, \delta_r)$ determines a 
homomorphism 
$$
\calF\longrightarrow\calE\ : \ 
(f_1, \dots, f_r)\longmapsto
f_1\delta_1+\dots+f_r\delta_r. 
$$
The Jacobian of this map is an element of 
$\Gamma(\HOM(\calO(d_1+\dots+d_r), \calO(c_1)))=\Gamma(\calO)=\bC$. 
By assumption, the Jacobian is nowhere vanishing, hence 
$\calF\simeq\calE$. 

$(ii)$ Suppose that $\calE|_H=\bigoplus_{i=1}^r F_i$ and 
$F_i\simeq\calO_H(d_i)$. Then by the surjectivity assumption, 
there is $\delta_i\in\Gamma(\calE\otimes\calO(-d_i))$ such that 
$\delta_i|_H$ is a nowhere vanishing section of 
$\Gamma(H, \calF_i\otimes\calO(-d_i))=\bC$. 
Since $\delta_1|_H, \dots, \delta_r|_H$ are linearly independent, 
so are $\delta_1, \dots, \delta_r$. Then by $(i)$, 
$\calE$ is also splitting. 
\end{proof}
\begin{rem}
\normalfont
Comments to those who are already familiar with free arrangements: 
$(i)$ and $(ii)$ in Proposition \ref{prop:easy} 
are analogies of 
Saito's and Ziegler's criteria respectively. 
See Theorem \ref{thm:saitocri} and 
Corollary \ref{cor:zie}. 
\end{rem}
Here we present some criteria for splitting. 

\begin{theorem}
\label{thm:horrocks}
Let $\calF$ be a vector bundle on $\bP^n$. 
\begin{itemize}
\item[$(1)$] (Horrocks) 
Assume that $n\geq 2$. 
$\calF$ is splitting $\Longleftrightarrow$
$H^i(\calF(d))=0$, for any $0<i<n$ and $d\in\bZ$ 
$\Longleftrightarrow$ 
$H^1(\calF(d))=0$, for any $d\in\bZ$. 
\item[$(2)$] (Horrocks) 
Assume that $n\geq 3$. Fix a hyperplane $H\subset\bP^n$. 
Then $\calF$ is splitting $\Longleftrightarrow$ 
$\calF|_H$ is splitting. 
\item[$(3)$] (Elencwajg-Forster, \cite{el-fo}) 
Assume that $n\geq 2$. Let $L\subset\bP^n$ be a generic line and 
set $\calE|_L=\calO_L(d_1)\oplus\dots\oplus\calO_L(d_r)$. Then 
$$
c_2(\calE)\geq\sum_{i<j}d_id_j, 
$$
and $\calE$ is splitting if and only if the equality holds. 
\end{itemize}
\end{theorem}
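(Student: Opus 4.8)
I would prove the three parts in turn --- parts (2) and (3) use (1) --- with the restriction sequence \eqref{eq:short} as the common tool.

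\emph{Part (1).} ``Splitting $\Rightarrow$ vanishing'' is the computation $H^i(\bP^n,\calO(d))=0$ for $0<i<n$ and all $d$, together with additivity of cohomology; in particular this yields the $H^1$-condition, which the middle condition trivially implies as well. For the converse I would induct on $n$: assuming $H^i(\calF(d))=0$ for all $0<i<n$ and $d\in\bZ$, show $\calF$ splits. The case $n=1$ is Grothendieck's theorem. For $n\ge2$, fix a hyperplane $H\cong\bP^{n-1}$. The long exact sequence of \eqref{eq:short} (with $\calE=\calF$), read in degrees $0<j<n-1$, traps $H^j(\calF|_H(d))$ between $H^j(\calF(d))=0$ and $H^{j+1}(\calF(d-1))=0$, so $\calF|_H$ satisfies the inductive hypothesis and hence splits; read in degree $0$, the same sequence shows $\Gamma_*(\calF)\to\Gamma_*(\calF|_H)$ is surjective, the obstruction lying in $H^1(\calF(d-1))=0$; Proposition \ref{prop:easy}$(ii)$ then concludes. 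The only nuisance here is the index bookkeeping. (Conceptually the hypothesis says exactly that $\Gamma_*(\calF)$ has depth $\dim S$ over the polynomial ring, i.e. is maximal Cohen--Macaulay, hence free by Auslander--Buchsbaum, so $\calF=\widetilde{\Gamma_*(\calF)}$ splits; the precise relation to the single condition $H^1_*(\calF)=0$ needs extra care and I do not pursue it.)

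\emph{Part (2).} Here $n\ge3$ and ``$\Rightarrow$'' is clear. For ``$\Leftarrow$'', I would verify the hypothesis of part (1) for $\calF$. Since $\calF|_H$ splits on $\bP^{n-1}$, part (1) gives $H^j(\calF|_H(d))=0$ for $0<j<n-1$. Substituting into the long exact sequence of \eqref{eq:short}, multiplication by the defining form of $H$, viewed as $H^i(\calF(d-1))\to H^i(\calF(d))$, becomes an isomorphism for $2\le i\le n-2$, a surjection for $i=1$, and an injection for $i=n-1$ --- in each case because the neighbouring cohomology of $\calF|_H$ is intermediate on $\bP^{n-1}$, hence zero. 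Now $\bigoplus_d H^i(\calF(d))$ vanishes for $d\gg0$ (Serre vanishing) and, $\calF$ being locally free, for $d\ll0$ (Serre duality); a bounded graded module on which this multiplication is everywhere surjective, or everywhere injective, must vanish. Hence $H^i(\calF(d))=0$ for all $0<i<n$, and part (1) gives splitting. The hypothesis $n\ge3$ is exactly what keeps these index ranges consistent; for $n=2$ the boundary cases $i=1=n-1$ collide and the argument rightly breaks down.

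\emph{Part (3).} This is the hardest. First I would reduce to $\bP^2$: restrict $\calF$ to a generic plane $P\cong\bP^2$, which leaves $c_2$ and the generic splitting type on a line unchanged, and --- iterating part (2) --- $\calF$ splits if and only if $\calF|_P$ does. On $\bP^2$ set $\delta(\calF):=c_2(\calF)-\sum_{i<j}d_id_j$; it is twist-invariant, vanishes on split bundles, and by Riemann--Roch equals $\chi\bigl(\bigoplus_i\calO(d_i)\bigr)-\chi(\calF)$. To show $\delta(\calF)\ge0$ I would use the Harder--Narasimhan filtration of $\calF$: by the Grauert--M\"ulich restriction theorem the generic splitting type of each semistable graded piece $\operatorname{gr}_j$ is balanced about its slope, so the generic splitting type of $\calF$ is the concatenation of those of the $\operatorname{gr}_j$; a Whitney-formula computation then gives $\delta(\calF)=\sum_j\delta(\operatorname{gr}_j)$, and $\delta(\operatorname{gr}_j)\ge0$ follows from the Bogomolov inequality for the semistable bundle $\operatorname{gr}_j$. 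Equality forces each $\operatorname{gr}_j$ to have vanishing discriminant, hence (semistable with $\Delta=0$ on the simply connected $\bP^2$) to be $\calO(a_j)^{\oplus r_j}$; then $\calF$ is an iterated extension of these, which splits because $\operatorname{Ext}^1(\calO(a),\calO(b))=H^1(\bP^2,\calO(b-a))=0$. The main obstacle is that this route imports several deep theorems (Grauert--M\"ulich, Bogomolov) and needs care with the splitting-type bookkeeping; for a self-contained argument one would instead follow the incidence-variety Chern-class computation of \cite{el-fo}.
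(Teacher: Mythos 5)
Your argument for part (1) is essentially the paper's own: induction on $n$ with Grothendieck's theorem as the base case, using the restriction sequence (\ref{eq:short}) both to show that $\calF|_H$ inherits the cohomological hypothesis and that $\Gamma_*(\calF)\to\Gamma_*(\calF|_H)$ is surjective, then concluding via Proposition \ref{prop:easy}$(ii)$ (the paper cites $(i)$ but clearly means the restriction criterion $(ii)$). The one substantive difference is that you induct on the full intermediate-cohomology hypothesis and explicitly decline to derive the $H^1$-only formulation; this caution is warranted. The inductive step genuinely needs the higher groups (to kill $H^1(\calF|_H(d))$ from the long exact sequence one needs $H^2(\calF(d-1))=0$), and in fact the $H^1$-only equivalence asserted in the statement fails for $n\geq 3$: the tangent bundle $T_{\bP^3}$ satisfies $H^1(T_{\bP^3}(d))=0$ for all $d$ (Euler sequence) yet does not split, so the version you prove is the correct one, while the paper's ``for $n\geq 3$ it is proved by induction'' glosses over this. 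For parts (2) and (3) the paper gives no proof at all, only attributions, so here you go beyond it. Your proof of (2) --- using the splitting of $\calF|_H$ to make multiplication by the linear form an isomorphism, surjection, or injection on the intermediate $H^i_*(\calF)$ and then propagating Serre vanishing from $d\gg 0$ and $d\ll 0$ --- is the standard argument and is correct. Your route to (3) via Harder--Narasimhan, Grauert--M\"ulich and Bogomolov is legitimate but, as you acknowledge, much heavier than Elencwajg--Forster's direct cohomological bound; the one step needing an explicit check is that the generic splitting type of $\calF$ is the concatenation of those of the graded pieces, which holds because consecutive Harder--Narasimhan slopes differ by less than the gap required for $\operatorname{Ext}^1(\calO_L(b),\calO_L(a))=H^1(\calO_L(a-b))$ to be nonzero. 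Overall the proposal is correct, matches the paper where the paper actually argues, and is more complete (and in one place more careful) than the source.
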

\begin{proof}
Here we give the proof for $(1)$. The direction $\Longrightarrow$ 
is well-known ($H^i(\bP^n, \calO(d))=0$, for $0<i<n$ and $\forall 
d\in\bZ$). 
Let us assume that $H^1(\bP^n, \calF(d))=0$ for $d\in\bZ$. Let us first 
consider the case $n=2$. By Grothendieck's splitting theorem, 
$\calF|_H$ is splitting. 
By the long exact sequence associated with (\ref{eq:short}), 
we have the surjectivity of 
$\Gamma_*(\calF)\longrightarrow\Gamma_*(\calF|_H)$. 
Hence by Proposition \ref{prop:easy} $(i)$, $\calF$ is splitting. 
For $n\geq3$, it is proved by induction. 
\end{proof}

\begin{rem}
\label{rem:horrocksref}
\normalfont
Horrocks' restriction criterion $(2)$ is generalized to 
reflexive sheaves (\cite{ay-split}). 
Later we will give a refinement of $(3)$ for $n=r=2$ (see 
Theorem \ref{thm:rk2}). 
\end{rem}

\subsection{Basics of arrangements}

Let $V$ be an $\ell$-dimensional vector space. 
A finite set of affine hyperplanes 
$\A=\{H_1, \ldots, H_n\}$ is called a {\it hyperplane arrangement}. 
For each hyperplane $H_i$ we fix a defining 
equation $\alpha_i$ such that $H_i=\alpha_i^{-1}(0)$. 
An arrangement $\A$ is called {\em central} if each $H_i$ passes 
the origin $0\in V$. In this case, the defining equation $\alpha_i
\in V^*$ is linear homogeneous. 
Let $L(\A)$ be the set of non-empty intersections of 
elements of $\A$. Define a partial order on $L(\A)$ 
by $X\leq Y\Longleftrightarrow Y\subseteq X$ for $X, Y\in L(\A)$. 
Note that this is reverse inclusion.

Define a {\it rank function} on $L(\A)$ by $r(X)=\codim X$. 
Denote $L^p(\A)=\{X\in L(\A)|\ r(X)=p\}$. We call $\A$ 
{\it essential} if $L^\ell(\A)\neq\emptyset$. 

Let $\mu:L(\A)\rightarrow \bZ$ be the {\it M\"obius function} 
of $L(\A)$ defined by 
$$
\mu(X)=
\left\{
\begin{array}{ll}
1 &\mbox{ for }X=V\\
-\sum_{Y<X}\mu(Y), &\mbox{ for } X>V. 
\end{array}
\right.
$$
The {\it characteristic polynomial} of $\A$ is 
$\chi(\A, t)=\sum\nolimits_{X\in L(\A)}\mu(X)t^{\dim X}$. 
The characteristic polynomial is characterized by the following 
recursive relations. 
\begin{prop}
Let $\A=\{H_1, \dots, H_n\}$ be a hyperplane arrangement in $V$. 
Let $\A'=\{H_1, \dots, H_{n-1}\}$ and $\A''=H_n\cap\A'$ the induced 
arrangement on $H_n$. Then 
\begin{itemize}
\item in case $\A$ is empty, $\chi(\emptyset, t)=t^{\dim V}$, and 
\item $\chi(\A, t)=\chi(\A', t)-\chi(\A'', t)$. 
\end{itemize}
\end{prop}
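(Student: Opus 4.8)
The plan is to dispatch the empty case by inspection and to prove the recursion by comparing the M\"obius functions of the three intersection lattices. For $\A=\emptyset$ we have $L(\emptyset)=\{V\}$ with $\mu(V)=1$, so $\chi(\emptyset,t)=t^{\dim V}$. For the recursion set $H:=H_n$. First I would record three elementary compatibilities of the lattices. (a) If $X\in L(\A)$ and $X\not\subseteq H$, then in any expression $X=\bigcap_{i\in I}H_i$ one has $n\notin I$, so $X\in L(\A')$; thus $\{X\in L(\A):X\not\subseteq H\}=\{X\in L(\A'):X\not\subseteq H\}$. (b) The inclusion of subspaces identifies $\{X\in L(\A):X\subseteq H\}$ with $L(\A'')$, viewed as a poset of affine subspaces of $H$. (c) If $X\in L(\A')$ and $X\subseteq H$, then $X=X\cap H\in L(\A'')$, so $\{X\in L(\A'):X\subseteq H\}$ is a sub-poset of $L(\A'')$, equal to $\{X\in L(\A''):X\in L(\A')\}$.

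The heart of the matter is the following pair of identities, which I would prove simultaneously by induction on $\codim X$: for $X\in L(\A)$,
(A) if $X\not\subseteq H$ then $\mu_\A(X)=\mu_{\A'}(X)$; and
(B) if $X\subseteq H$ then $\mu_\A(X)=\mu_{\A'}(X)-\mu_{\A''}(X)$, under the convention $\mu_{\A'}(X):=0$ when $X\notin L(\A')$.
The base case $X=V$ is (A) and is trivial. For the inductive step of (A): by (a) the principal interval below $X$ is literally the same in $L(\A)$ and in $L(\A')$, so the defining recursion for $\mu$ gives $\mu_\A(X)=\mu_{\A'}(X)$. For (B), I would expand $\mu_\A(X)=-\sum_{Y\supsetneq X,\ Y\in L(\A)}\mu_\A(Y)$, split the sum according to whether $Y\subseteq H$, and apply (A) to the first group and the inductive form of (B) to the second; using once more that $Y\not\subseteq H$ forces $Y\in L(\A')$, the regrouped expression collapses to
\[
\mu_\A(X)=-\sum_{Y\supsetneq X,\ Y\in L(\A')}\mu_{\A'}(Y)
+\sum_{Y\supsetneq X,\ Y\in L(\A'')}\mu_{\A''}(Y).
\]

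The sub-lemma that makes this work is: $\sum_{Y\in L(\A'),\ Y\supseteq X}\mu_{\A'}(Y)=0$ whenever $X\subsetneq H$. Indeed, the members of $L(\A')$ containing $X$ form the interval $[V,X_0]$ in $L(\A')$, where $X_0:=\bigcap\{Y\in L(\A'):Y\supseteq X\}$ is the smallest member of $L(\A')$ containing $X$; and $X_0\neq V$ precisely because $X\subsetneq H$ forces $X$ to lie in some $H_i$ with $i\le n-1$, so the M\"obius sum over $[V,X_0]$ vanishes. Plugging this in (together with the analogous $\sum_{Y\in L(\A''),\ Y\supseteq X}\mu_{\A''}(Y)=0$ for $X\neq H$), the displayed formula yields $\mu_\A(X)=\mu_{\A'}(X)-\mu_{\A''}(X)$ for $X\subsetneq H$; the single remaining case $X=H$ is checked directly, since $\mu_\A(H)=-1$, $\mu_{\A'}(H)=0$, $\mu_{\A''}(H)=1$. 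This finishes the induction.

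Finally I would substitute (A) and (B) into $\chi(\A,t)=\sum_{X\in L(\A)}\mu_\A(X)t^{\dim X}$, splitting the sum over the parts $X\not\subseteq H$ and $X\subseteq H$ and using (a)--(c). The first part becomes $\sum_{X\in L(\A'),\ X\not\subseteq H}\mu_{\A'}(X)t^{\dim X}$; the second becomes $\sum_{X\in L(\A'),\ X\subseteq H}\mu_{\A'}(X)t^{\dim X}-\sum_{X\in L(\A'')}\mu_{\A''}(X)t^{\dim X}$. Adding the two, the $\mu_{\A'}$-terms recombine into $\chi(\A',t)$ and what is left is $-\chi(\A'',t)$, giving $\chi(\A,t)=\chi(\A',t)-\chi(\A'',t)$. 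I expect the only genuine obstacle to be the bookkeeping: keeping the three compatibilities (a)--(c) straight, and isolating the degenerate cases $X=H_n$ and $X_0=V$ so that the M\"obius recursion is invoked only where it is legitimate.
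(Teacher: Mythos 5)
Your proof is correct. The paper states this proposition without proof, so there is no argument of the author's to compare against; what you give is the standard M\"obius-theoretic derivation (essentially Lemma 2.55 and Theorem 2.56 of Orlik--Terao, \emph{Arrangements of Hyperplanes}): first establish $\mu_{\A}(X)=\mu_{\A'}(X)$ for $X\not\subseteq H_n$ and $\mu_{\A}(X)=\mu_{\A'}(X)-\mu_{\A''}(X)$ for $X\subseteq H_n$, then sum against $t^{\dim X}$. You have handled the points where such proofs usually go wrong: the identification of $\{X\in L(\A):X\subseteq H_n\}$ with $L(\A'')$, the convention $\mu_{\A'}(X)=0$ for $X\notin L(\A')$ together with its justification via the vanishing of the M\"obius sum over the interval $[V,X_0]$ (including the check that $X_0\neq V$ because $X\subsetneq H_n$ forces $X$ to lie on some $H_i$ with $i\leq n-1$), and the separate base case $X=H_n$, where $(\mu_{\A},\mu_{\A'},\mu_{\A''})=(-1,0,1)$. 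For the record, a frequently used alternative is Whitney's theorem, $\chi(\A,t)=\sum_{\mathcal{B}}(-1)^{|\mathcal{B}|}t^{\dim\bigcap\mathcal{B}}$ summed over subarrangements $\mathcal{B}\subseteq\A$ with $\bigcap\mathcal{B}\neq\emptyset$: splitting on whether $H_n\in\mathcal{B}$ instantly isolates $\chi(\A',t)$, but identifying the remaining sum with $-\chi(\A'',t)$ still requires matching subsets of $\A'$ with their traces on $H_n$, so it is not genuinely shorter than the route you took.
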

We also define the $i$-th Betti number 
$b_i(\A)$ ($i=1, \dots, \ell$) 
by the formula 
$$
\chi(\A, t)
=\sum\nolimits_{i=0}^\ell (-1)^ib_i(\A)t^{\ell-i}. 
$$
This naming and the importance of the characteristic polynomial 
in combinatorics would be justified by the following result. 
\begin{theorem}
\label{thm:chpoly}
$(1)$ If $\A$ is an arrangement in $\bF_q^\ell$ (vector space over 
a finite field $\bF_q$), then 
$|\bF_q^\ell\setminus\bigcup_{H\in\A}H|=\chi(\A, q)$. 

$(2)$ If $\A$ is an arrangement in $\bC^\ell$, then the topological 
$i$-th Betti number of the complement is 
$b_i(\bC^\ell\setminus\bigcup_{H\in\A}H)=b_i(\A)$. 

$(3)$ If $\A$ is an arrangement in $\bR^\ell$, then 
$|\chi(\A, -1)|$ is the number of chambers and 
$|\chi(\A, 1)|$ is the number of bounded chambers. 
\end{theorem}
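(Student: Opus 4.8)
The plan is to prove all three statements together, by induction on the pair $(\dim V,\,\#\A)$, using the deletion--restriction recursion $\chi(\A,t)=\chi(\A',t)-\chi(\A'',t)$ and the base case $\chi(\emptyset,t)=t^{\dim V}$ recorded in the preceding proposition: each of the three quantities to be computed satisfies exactly this recursion and the same base case. For $(1)$, put $f(\A)=\#\bigl(\bF_q^\ell\setminus\bigcup_{H\in\A}H\bigr)$. Then $f(\emptyset)=q^\ell$, and for $\A=\A'\cup\{H_n\}$ a point of $\bF_q^\ell$ avoids every member of $\A$ iff it avoids every member of $\A'$ and is not on $H_n$; the points avoiding $\A'$ that do lie on $H_n$ form exactly the complement of $\A''=\A^{H_n}$ inside $H_n\cong\bF_q^{\ell-1}$, so there are $f(\A'')$ of them. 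Hence $f(\A)=f(\A')-f(\A'')$ and the induction gives $f(\A)=\chi(\A,q)$. (Equivalently one can argue directly by M\"obius inversion: for $x\in\bF_q^\ell$ let $c(x)=\bigcap_{x\in H}H\in L(\A)$; for each $Z\in L(\A)$ one has $\sum_{Y\ge Z}\#\{x:c(x)=Y\}=\#(Z\cap\bF_q^\ell)=q^{\dim Z}$, so M\"obius inversion at $Z=V$ yields $\#\{x:c(x)=V\}=\sum_{Z}\mu(Z)q^{\dim Z}=\chi(\A,q)$, the left side being the size of the complement.)

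For $(2)$, write $M(\A)=\bC^\ell\setminus\bigcup_{H\in\A}H$ and $m_i(\A)=\dim_\bQ H^i(M(\A);\bQ)$. Expanding $\chi(\A,t)=\chi(\A',t)-\chi(\A'',t)$ in terms of the $b_i$, and using $\dim H_n=\ell-1$, gives the numerical recursion $b_i(\A)=b_i(\A')+b_{i-1}(\A'')$ (with $b_{-1}:=0$), while $\chi(\emptyset,t)=t^\ell$ gives $b_0(\emptyset)=1$ and $b_i(\emptyset)=0$ for $i>0$; since $M(\emptyset)=\bC^\ell$ is contractible, it suffices to establish the short exact sequence
\[
0\longrightarrow H^i(M(\A'))\longrightarrow H^i(M(\A))\longrightarrow H^{i-1}(M(\A''))\longrightarrow 0 .
\]
Its geometric origin is that $M(\A'')=\{\alpha_n=0\}\cap M(\A')$ is a smooth closed complex hypersurface of $M(\A')$ whose complement is exactly $M(\A)$; the Gysin (Thom) long exact sequence of this pair reads
\[
\cdots\to H^{i-2}(M(\A''))\xrightarrow{\ \gamma\ }H^i(M(\A'))\to H^i(M(\A))\to H^{i-1}(M(\A''))\xrightarrow{\ \gamma\ }H^{i+1}(M(\A'))\to\cdots ,
\]
and it breaks into the displayed short exact pieces exactly when every Gysin map $\gamma$ vanishes, i.e. when the restriction-type map $H^i(M(\A))\to H^{i-1}(M(\A''))$ is onto. \textbf{This surjectivity is the crux}, and the one point where real topology (beyond the formal recursion) enters: I would establish it, following Brieskorn and Orlik--Solomon, either by exhibiting a section of it built from the logarithmic one-form $d\alpha_n/\alpha_n$, or by first proving that $H^*(M(\A))$ is generated as an algebra by the classes of the forms $d\alpha_H/\alpha_H$ $(H\in\A)$, which forces $\gamma=0$ by a rank count. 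Granting the short exact sequence, $m_i(\A)=m_i(\A')+m_{i-1}(\A'')$, and the induction closes with $m_i(\A)=b_i(\A)$. (Alternatively one can deduce $(2)$ from $(1)$: spreading $\A$ out over a finitely generated subring of $\bC$, part $(1)$ gives $\#M(\A)(\bF_q)=\chi(\A,q)$ for almost all $q$, and since an arrangement complement is smooth with cohomology of Tate type, comparison of $\ell$-adic with singular cohomology turns this polynomial point count into the Poincar\'e polynomial of $M(\A)(\bC)$.)

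For $(3)$, let $r(\A)$ denote the number of chambers of $\bR^\ell\setminus\bigcup_{H\in\A}H$. When $H_n$ is adjoined to $\A'$, a chamber of $\A'$ splits into two chambers of $\A$ precisely when it meets $H_n$ in its relative interior, and the chambers that split are in bijection with the chambers of $\A''=\A^{H_n}$ inside $H_n\cong\bR^{\ell-1}$ (each chamber of $\A''$ sits inside a unique chamber of $\A'$, which it bisects). Thus $r(\A)=r(\A')+r(\A'')$, the very recursion satisfied by $(-1)^{\dim V}\chi(\A,-1)$; with $r(\emptyset)=1$ the induction gives $r(\A)=(-1)^\ell\chi(\A,-1)=|\chi(\A,-1)|$ (the last equality because $r(\A)\ge 1$). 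For bounded chambers I would run the same argument on the number $b(\A)$ of bounded chambers, this time deleting an $H_n$ chosen so that $\A'$ remains essential --- possible once $\#\A>\dim V$, since a spanning family of more than $\dim V$ hyperplanes has a proper spanning subfamily --- and then $\A^{H_n}$ is automatically essential too (a nonzero direction common to all traces $H_i\cap H_n$ would be common to all the $H_i$, contradicting essentiality of $\A'$). A short case analysis of the contribution of a cut chamber $R'$ of $\A'$ --- if $R'$ is bounded, both halves are, a net gain of one bounded chamber; if $R'$ is unbounded but $R'\cap H_n$ is bounded, exactly one half is bounded, again a net gain of one; and if $R'\cap H_n$ is unbounded then both halves are, no gain --- shows $b(\A)-b(\A')$ equals the number of bounded chambers of $\A^{H_n}$, i.e. $b(\A)=b(\A')+b(\A^{H_n})$. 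Here the essentiality of $\A'$ enters exactly to rule out the anomalous case ``$R'$ unbounded, both halves unbounded, yet $R'\cap H_n$ bounded'', via the fact that an essential arrangement has no chamber whose recession cone contains a line. This matches the recursion for $(-1)^\ell\chi(\A,1)$ on essential arrangements, with arrangements in general position ($\chi=(t-1)^\ell$, $b=0$) as base case, and gives $b(\A)=|\chi(\A,1)|$ for essential $\A$ (for general $\A$ both sides are read off the essentialization). The bounded-chamber bookkeeping is the delicate ingredient of $(3)$, as it was already in Zaslavsky's original proof.
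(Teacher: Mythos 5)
The paper states this theorem without proof --- it is a compendium of classical results (part $(1)$ is the finite-field method of Crapo--Rota/Athanasiadis, part $(2)$ is Orlik--Solomon building on Brieskorn's lemma, part $(3)$ is Zaslavsky's theorem) --- so there is no in-paper argument to compare yours against; your unified deletion--restriction strategy is the standard route and is correct. The counting recursion in $(1)$, the bijection in $(3)$ between chambers of $\A''$ and the chambers of $\A'$ cut by $H_n$, and the identification in $(2)$ of $M(\A'')$ as a smooth divisor in $M(\A')$ with complement $M(\A)$ are all right, and each induction closes against the recursion $\chi(\A,t)=\chi(\A',t)-\chi(\A'',t)$ recorded in the preceding proposition. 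Two caveats. First, in $(2)$ all of the topological content is concentrated in the vanishing of the Gysin maps; you correctly flag this as the crux, but it is not a routine verification --- it is essentially Brieskorn's theorem (equivalently, generation of $H^*(M(\A))$ by the classes of the forms $d\alpha_H/\alpha_H$) --- so as written your argument for $(2)$ is a reduction to that result rather than a self-contained proof. Second, your bounded-chamber bookkeeping in $(3)$ is the genuinely delicate step and you handle it correctly, including the observation that essentiality of $\A'$ is exactly what excludes a cut chamber whose recession cone contains a line; it is worth noting that the statement as printed is literally false for non-essential arrangements (two parallel lines in $\bR^2$ have no bounded chamber, yet $|\chi(\A,1)|=1$), so the essentiality hypothesis you impose is a necessary reading of the theorem, not merely a convenience of the induction.
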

$(1)$ of the above theorem can be used for the computation 
of $\chi(\A, t)$ for $\A$ defined over $\bQ$. 
It is sometimes called ``Finite field method''. Athanasiadis 
pointed out that we may drop the assumption ``field''. 

\begin{theorem}
Let $\A$ be a hyperplane arrangement such that each $H\in\A$ 
is defined by a linear form $\alpha_H$ of $\bZ$-coefficients. 
For a positive integer $m>0$, consider 
$\overline{H}=\{x\in(\bZ/m\bZ)^\ell\mid \alpha_H(x)\equiv  0\mod m\}$. 
There exists a positive integer $N$ which depends only on $\A$ 
such that if $m>N$ and $m$ is coprime to $N$, then 
$$
|(\bZ/m\bZ)^n\setminus\bigcup_{H\in\A}\overline{H}|=\chi(\A, m). 
$$
\end{theorem}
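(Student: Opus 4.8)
The plan is to evaluate the left-hand side by inclusion--exclusion over the subsets of $\A$, to compute each resulting term by counting the solutions of a system of linear congruences modulo $m$, and then to recognize the alternating sum that appears as $\chi(\A,t)$ specialized at $t=m$ via Whitney's theorem for arrangements.

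First I would fix $N$. For each $S\subseteq\A$ write $A_S$ for the integer matrix whose rows are the linear parts of the forms $\alpha_H$, $H\in S$, and $[A_S\mid b_S]$ for its augmentation by the corresponding constant terms; let $N$ be a positive integer divisible by every nonzero maximal minor of every such $A_S$ and $[A_S\mid b_S]$ (finitely many, as $\A$ is finite), so $N$ depends only on $\A$. The key claim is: for $m$ with $\gcd(m,N)=1$ (in particular for $m>N$ coprime to $N$, as in the statement) and every $S\subseteq\A$,
\[
\Bigl|\bigcap_{H\in S}\overline{H}\Bigr|=
\begin{cases}
m^{\dim\bigcap_{H\in S}H},&\text{if }\bigcap_{H\in S}H\neq\emptyset,\\
0,&\text{otherwise,}
\end{cases}
\]
with the convention $\bigcap_{H\in\emptyset}=V$, giving $m^{\ell}$. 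To prove this one diagonalizes $A_S$ by a Smith normal form $A_S=U_SD_SV_S$ with $U_S,V_S$ unimodular; the change of variable $y=V_Sx$ is a bijection of $(\bZ/m\bZ)^{\ell}$ and turns the system $\{\alpha_H(x)\equiv0\pmod m:H\in S\}$ into uncoupled equations $d_iy_i\equiv c_i\pmod m$ together with equations $0\equiv c_j\pmod m$. Because $N$ kills the maximal minors, the invariant factors $d_i$ are units modulo $m$ and $\operatorname{rank}A_S$ modulo $m$ equals its rank over $\bQ$; hence each $d_iy_i\equiv c_i$ has a unique solution, the remaining coordinates are free, and the equations $0\equiv c_j$ hold modulo $m$ if and only if the system is consistent over $\bQ$, i.e. $\bigcap_{H\in S}H\neq\emptyset$, in which case $\dim\bigcap_{H\in S}H=\ell-\operatorname{rank}A_S$ is exactly the number of free coordinates. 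I expect this step---ruling out solutions modulo $m$ of subsystems that are inconsistent over $\bQ$, and getting the exact power of $m$---to be the main technical point; the choice of $N$ is precisely what makes the reduction mod $m$ behave like the situation over $\bQ$.

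Granting the claim, inclusion--exclusion gives
\[
\Bigl|(\bZ/m\bZ)^{\ell}\setminus\bigcup_{H\in\A}\overline{H}\Bigr|
=\sum_{S\subseteq\A}(-1)^{|S|}\Bigl|\bigcap_{H\in S}\overline{H}\Bigr|
=\sum_{\substack{S\subseteq\A\\ \bigcap_{H\in S}H\neq\emptyset}}(-1)^{|S|}m^{\dim\bigcap_{H\in S}H},
\]
and the right-hand side is $\chi(\A,m)$ by Whitney's theorem for arrangements---equivalently, grouping the sets $S$ by $X=\bigcap_{H\in S}H\in L(\A)$ and using the Möbius identity $\sum_{S:\bigcap_{H\in S}H=X}(-1)^{|S|}=\mu(X)$ (immediate from $\sum_{Y\leq X}\sum_{S:\bigcap S=Y}(-1)^{|S|}=[X=V]$ together with Möbius inversion), so the sum collapses to $\sum_{X\in L(\A)}\mu(X)m^{\dim X}=\chi(\A,m)$. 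I would also remark that the same argument, restricted to $m=p$ prime, re-proves Theorem \ref{thm:chpoly}(1) without appealing to it.
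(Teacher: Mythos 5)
The paper does not actually prove this theorem: it is quoted as Athanasiadis's result (with references to \cite{ath-lin, ath-gen}) immediately after the finite field method, so there is no in-paper proof to compare against. Your argument is the standard proof of Athanasiadis's lattice-point count and it is essentially correct: the Smith normal form reduction shows that for $\gcd(m,N)=1$ each subsystem contributes $m^{\dim\bigcap_{H\in S}H}$ or $0$ according to consistency over $\bQ$, and the cross-cut/Whitney identity $\sum_{S:\bigcap S=X}(-1)^{|S|}=\mu(X)$ (with the paper's convention that $V$ is the minimum of $L(\A)$) collapses the inclusion--exclusion sum to $\chi(\A,m)$. One small point of care in the definition of $N$: ``nonzero maximal minor'' should be read as ``nonzero minor of size equal to the rank'' (equivalently, take $N$ divisible by all nonzero minors of all sizes). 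If ``maximal'' meant ``of maximal size'', then for a matrix $A_S$ or $[A_S\mid b_S]$ whose rank is smaller than both of its dimensions all such minors vanish and the condition on $N$ becomes vacuous, whereas your argument needs $N$ to absorb the invariant factors $d_1\cdots d_r$ (which equal the gcd of the rank-sized minors of $A_S$) and, in the inconsistent case, the quantities $d_1\cdots d_r\,c_j$ (which generate the same ideal as the rank-sized minors of $[A_S\mid b_S]$). With that reading the two cases of your key claim go through exactly as you describe, and in fact you only use $\gcd(m,N)=1$ and $m>1$, which is slightly stronger than the stated theorem; your closing remark that the case $m=p$ prime recovers Theorem~\ref{thm:chpoly}(1) is also correct.
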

Athanasiadis systematically used this result to compute 
characteristic polynomials. 
(See \cite{ath-lin, ath-gen}.) 

\subsection{Basics of free arrangements}

Let $V=\bC^\ell$ be a complex vector space with 
coordinate $(x_1, \cdots, x_\ell)$, $\A=\{H_1, \ldots, H_n\}$ 
be a central hyperplane arrangement, namely, 
$0\in H_i$ for all $i=1, \dots, n$. 
We denote by $\Der_V=\bigoplus_{i=1}^\ell S\frac{\partial}{\partial x_i}$ 
the set of polynomial vector fields on $V$ (or $S$-derivations) and 
by 
$\Omega_V^p=\bigoplus_{i_1<\dots <i_p}S 
dx_{i_1}\wedge\dots\wedge dx_{i_p}$ the set of polynomial 
differential $p$-forms. 
\begin{defn}
Let $\theta=\sum_{i=1}^\ell f_i\partial_{x_i}$ be a polynomial vector field. 
$\theta$ is said to be homogeneous of polynomial degree $d$ when 
$f_1, \dots, f_\ell$ are homogeneous polynomial of degree $d$. 
It is denoted by $\pdeg\theta=d$. 
\end{defn}

\begin{rem}
Usually the degree of $\theta$ is considered to be 
$\deg f_i-1$ which is one less than $\pdeg\theta$. 
To avoid confusion, we use the notation $\pdeg\theta$. 
\end{rem}

Let us denote by 
$S=S(V^*)=\bC[x_1, \ldots,x_\ell]$ the polynomial ring and fix 
$\alpha_i\in V^*$ a defining equation of $H_i$, i.e., 
$H_i=\alpha_i^{-1}(0)$.

\begin{defn}
A multiarrangement is a pair $(\A, \m)$ of an arrangement 
$\A$ with a map $ \m: \A\rightarrow \bZ_{\geq 0}$, called 
the multiplicity.  
\end{defn}
An arrangement $\A$ can be identified with a multiarrangement 
with constant multiplicity $m\equiv 1$, which is sometimes called 
a {\em simple arrangement}. 
With this notation, the main object is 
the following module of $S$-derivations which has contact 
to each hyperplane of order $m$. 
We also put $Q=Q(\A,\m)=\prod_{i=1}^n\alpha_i^{\m(H_i)}$ and 
$|\m|=\sum_{H\in\A}\m(H)$. 

\begin{defn}
Let $(\A, \m)$ be a multiarrangement, and define 
the module of vector fields 
logarithmic tangent to $\A$ with multiplicity $\m$ 
(logarithmic vector field) by 
$$
D(\A, \m)=\{\delta\in\Der_V| \delta\alpha_i\in(\alpha_i)^{\m(H_i)},
\forall i\}, 
$$
and differential forms with logarithmic poles along $\A$ 
(logarithmic forms) by 
$$
\Omega^p(\A, \m)=\left\{\left.
\omega\in\frac{1}{Q}\Omega^p_V\right| d\alpha_i\wedge\omega
\mbox{ \normalfont{does not have pole along} }H_i, 
\forall i\right\}. 
$$
\end{defn}
The module $D(\A, \m)$ is obviously a graded $S$-module. 
It is proved in \cite{sai-log} that $D(\A, \m)$ and $\Omega^1(\A, \m)$ 
are dual modules to each other. Therefore, they are reflexive modules. 
A multiarrangement $(\A, \m)$ is said to be {\em free} with 
exponents $(e_1, \ldots, e_\ell)$ if and only if 
$D(\A, \m)$ is an $S$-free module and there exists a 
basis $\delta_1, \ldots,\delta_\ell\in D(\A, \m)$ such that 
$\pdeg \delta_i=e_i$. 
When $\m\equiv 1$, $D(\A, 1)$ and $\Omega^p(\A, 1)$ is denoted 
by $D(\A)$ and $\Omega^p(\A)$ for simplicity. 
An arrangement $\A$ is said to be free if $(\A, 1)$ is free. 
The Euler vector field 
$\theta_E=\sum_{i=1}^\ell x_i\partial_i$ 
is always contained in $D(\A)$ for simple case. 

Let $\delta_1, \ldots, \delta_\ell\in D(\A, \m)$. Then 
$\delta_1\wedge\cdots\wedge\delta_\ell$ is divisible by 
$Q(\A, \m)
\frac{\partial}{\partial x_1}
\wedge\dots\wedge
\frac{\partial}{\partial x_\ell}$. 
The determinant of coefficient matrix of $\delta_1, \dots, 
\delta_\ell$ can be used to characterize freeness. 
\begin{theorem}
\label{thm:saitocri}
(Saito's criterion, \cite{sai-log}) 
Let $\delta_1, \dots, \delta_\ell\in D(\A, \m)$. 
Then the following are equivalent: 
\begin{itemize}
\item[(i)] $D(\A, \m)$ is free with basis $\delta_1, \dots, \delta_\ell$, 
i. e., $D(\A, \m)=S\cdot\delta_1\oplus\dots\oplus S\cdot\delta_\ell$. 
\item[(ii)] 
$\delta_1\wedge\cdots\wedge\delta_\ell=
c\cdot Q(\A, \m)\cdot\frac{\partial}{\partial z_1}
\wedge\cdots\wedge\frac{\partial}{\partial z_\ell}$, where $c\in\bC^*$. 
\item[(iii)] $\delta_1, \dots, \delta_\ell$ are linearly independent 
over $S$ and $\sum_{i-1}^\ell\pdeg\delta_i=|\m|=
\sum_{H\in\A}\m(H)$. 
\end{itemize}
\end{theorem}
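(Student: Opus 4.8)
The plan is to prove the three implications $(ii)\Rightarrow(i)$, $(i)\Rightarrow(iii)$, and $(iii)\Rightarrow(ii)$, starting from the easy observation already recorded above that for \emph{any} $\delta_1,\dots,\delta_\ell\in D(\A,\m)$ the wedge $\delta_1\wedge\cdots\wedge\delta_\ell$ equals $h\cdot Q(\A,\m)\cdot\partial_{x_1}\wedge\cdots\wedge\partial_{x_\ell}$ for some $h\in S$: indeed the coefficient determinant $\det(f_{ij})$, where $\delta_i=\sum_j f_{ij}\partial_{x_j}$, is divisible by $Q$ because along each $H_i$ the matrix $(f_{ij})$ drops rank modulo $\alpha_i^{\m(H_i)}$ (each row lies in the appropriate power of the ideal after an orthogonal change of coordinates adapted to $H_i$). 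Write $\det(f_{ij})=h\cdot Q$.

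\emph{$(ii)\Rightarrow(i)$.} Assume $h=c\in\bC^*$. First I would show the $\delta_i$ span $D(\A,\m)$ over the fraction field: given $\theta\in D(\A,\m)$, Cramer's rule lets me write $\theta=\sum_i (g_i/(cQ))\,\delta_i$ with $g_i\in S$, where $g_i$ is (up to sign) $Q$ times the determinant obtained by replacing the $i$-th row of $(f_{ij})$ by the coefficients of $\theta$. The point is that this latter determinant is again divisible by $Q$, by the same rank-drop argument applied to the matrix with rows $\delta_1,\dots,\theta,\dots,\delta_\ell$ (all of which lie in $D(\A,\m)$); hence $g_i/(cQ)\in S$ and $\theta\in S\delta_1+\cdots+S\delta_\ell$. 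Linear independence over $S$ is immediate from $c\neq 0$, so $D(\A,\m)=\bigoplus S\delta_i$.

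\emph{$(i)\Rightarrow(iii)$.} Linear independence is clear. For the degree equality I would pass to the associated sheaf on $\bP^{\ell-1}$, or argue with Hilbert series: freeness gives $D(\A,\m)\cong\bigoplus_{i=1}^\ell S[-e_i]$ as graded modules, so the wedge product $\bigwedge^\ell D(\A,\m)$ is the rank-one free module $S[-\sum e_i]$ generated by $\delta_1\wedge\cdots\wedge\delta_\ell$; comparing with the identification of $\bigwedge^\ell\Der_V$ with $S$ via $\partial_{x_1}\wedge\cdots\wedge\partial_{x_\ell}$, the generator $\delta_1\wedge\cdots\wedge\delta_\ell$ is $h\cdot Q$ times the generator of $\bigwedge^\ell\Der_V$, forcing $hQ$ to have degree $\sum e_i$. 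It remains to see $h\in\bC^*$, i.e.\ that a basis wedges to a \emph{minimal}-degree element; for this I would use that $\Omega^1(\A,\m)$ is the dual of $D(\A,\m)$ and hence also free of the same rank, compute that $Q(\A,\m)$ itself (times a volume form) is a logarithmic $\ell$-form, and match Chern classes / first-order terms so that the determinant generated by the basis cannot properly divide $Q$. Then $\deg h=0$ and $\sum e_i=\deg Q=|\m|$.

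\emph{$(iii)\Rightarrow(ii)$.} If the $\delta_i$ are $S$-independent then $h\neq 0$, and $\deg(hQ)=\sum\pdeg\delta_i=|\m|=\deg Q$ forces $\deg h=0$, i.e.\ $h=c\in\bC^*$, which is exactly $(ii)$.

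The main obstacle I expect is the step $(i)\Rightarrow(iii)$ isolating that the determinant of a genuine \emph{basis} is a nonzero constant times $Q$ rather than a higher multiple — equivalently, ruling out that a free basis could have total degree strictly larger than $|\m|$. The clean way is the duality $D(\A,\m)^\vee\cong\Omega^1(\A,\m)$ from \cite{sai-log}: freeness of $D$ with exponents $(e_i)$ forces $\Omega^1$ to be free with exponents $(|\m|-e_i)$ after a determinant twist, and consistency of these two free resolutions pins down $\sum e_i=|\m|$. Everything else is the linear-algebra-over-$S$ bookkeeping sketched above, which I would not grind through in detail.
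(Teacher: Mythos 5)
The paper does not actually prove this theorem; it is stated with a citation to \cite{sai-log}, so there is no in-text proof to compare against. Judged on its own, your skeleton is the standard one and two of the three implications are sound: the key divisibility lemma (for any $\delta_1,\dots,\delta_\ell\in D(\A,\m)$ the coefficient determinant lies in $Q(\A,\m)\cdot S$, seen by adapting coordinates to each $H$ so that the column of values $\delta_j\alpha_H$ is divisible by $\alpha_H^{\m(H)}$ --- note it is a column, not a row, that carries the divisibility), the Cramer's-rule argument for $(ii)\Rightarrow(i)$, and the degree count for $(iii)\Rightarrow(ii)$ are all correct, modulo the garbled description of $g_i$ (it should be the replaced determinant itself, which is divisible by $Q$, not ``$Q$ times'' it).

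The genuine gap is exactly where you flagged it: $(i)\Rightarrow(iii)$, i.e.\ ruling out that a free basis has $\delta_1\wedge\cdots\wedge\delta_\ell=hQ\,\partial_{x_1}\wedge\cdots\wedge\partial_{x_\ell}$ with $\deg h>0$. The route you sketch --- ``match Chern classes / first-order terms'' and ``consistency of the two free resolutions'' of $D(\A,\m)$ and $\Omega^1(\A,\m)$ --- is not an argument as it stands: knowing $\Omega^1(\A,\m)\simeq D(\A,\m)^\vee$ is free with the negated exponents tells you nothing about $\sum e_i$ unless you also prove that $\bigwedge^\ell$ of a basis of $\Omega^1(\A,\m)$ generates all of $\frac{1}{Q}\Omega^\ell_V$, which is the same difficulty in dual clothing. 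The standard way to close this is localization: off $\bigcup_{H}H$ one has $D(\A,\m)_p=\Der_{V,p}$, so the determinant of any basis is a unit there and hence vanishes only on the arrangement; and at a generic point $p$ of $H$ (where no other hyperplane passes), $D(\A,\m)_p$ is free with the explicit basis $\alpha_H^{\m(H)}\partial_{x_1},\partial_{x_2},\dots,\partial_{x_\ell}$ in adapted coordinates, whose determinant has order exactly $\m(H)$ along $H$; since two local bases differ by a unit, $\operatorname{ord}_H(hQ)=\m(H)$, forcing $h$ to be a nonvanishing constant. Without this (or an equivalent completion of the duality argument), the chain of implications is not closed.
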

From Saito's criterion, we also obtain that if 
a multiarrangement $(\A, \m)$ is free with 
exponents $(e_1, \ldots, e_\ell)$, then 
$|\m|=\sum_{i=1}^\ell e_i$. 

\begin{prop}
If $\A$ is free, then $\A$ is locally free, i.e., 
$\A_X=\{H\in\A\mid X\subset H\}$ is free for any $X\in L(\A)$, $X\neq 0$. 
\end{prop}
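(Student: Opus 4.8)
The plan is to compare the module $D(\A_X)$ with a localization of $D(\A)$, and then reduce to an essential arrangement by a coordinate splitting. Fix $X\in L(\A)$. We may assume $X\neq V$, since then $\A_X=\emptyset$ is free, and $X\neq 0$ is the case left open in the statement (in any event $\A_0=\A$ is free); thus $r:=\codim X$ satisfies $1\le r\le\ell-1$. Let $\mathfrak p_X\subset S$ be the prime ideal generated by the linear forms vanishing on $X$. First I would record the localization identity
$$
D(\A)_{\mathfrak p_X}=D(\A_X)_{\mathfrak p_X}.
$$
The inclusion $D(\A)\subseteq D(\A_X)$ is clear, hence so is $\subseteq$; conversely, if $H\in\A\setminus\A_X$ then $X\not\subseteq H$, so $\alpha_H\notin\mathfrak p_X$ becomes a unit in $S_{\mathfrak p_X}$ and the condition $\delta\alpha_H\in(\alpha_H)$ is automatic there, giving $D(\A_X)_{\mathfrak p_X}\subseteq D(\A)_{\mathfrak p_X}$. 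Since $\A$ is free, $D(\A)$ is a free $S$-module, and therefore $D(\A_X)_{\mathfrak p_X}$ is free over $S_{\mathfrak p_X}$.

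Next I would pass to the essentialization on $V/X$. Choose coordinates so that $X=\{x_1=\dots=x_r=0\}$; then every $\alpha_H$ with $H\in\A_X$ lies in $S'=\bC[x_1,\dots,x_r]$, and these forms define an arrangement $\mathcal B$ in $\bC^r$. Writing a derivation $\sum_{i=1}^\ell f_i\partial_{x_i}$ and expanding each $f_i$ as a polynomial in $x_{r+1},\dots,x_\ell$ with coefficients in $S'$, one checks directly — using only that each $\alpha_H\in S'$, so that $(\alpha_H)S\cap S'=(\alpha_H)S'$ and $\partial_{x_i}\alpha_H\in S'$ for $i\le r$ — the graded $S$-module decomposition
$$
D(\A_X)\;\cong\;\bigl(S\otimes_{S'}D(\mathcal B)\bigr)\;\oplus\;\bigoplus_{i=r+1}^{\ell}S\,\partial_{x_i}.
$$
Because $S$ is faithfully flat over $S'$ and a direct summand complementary to a free module does not affect freeness, $D(\A_X)$ is $S$-free if and only if $D(\mathcal B)$ is $S'$-free. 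Localizing the same decomposition at $\mathfrak p_X=(x_1,\dots,x_r)S$ and using that $S'_{\mathfrak m'}\to S_{\mathfrak p_X}$ is faithfully flat (with $\mathfrak m'=(x_1,\dots,x_r)S'$, the irrelevant ideal of $S'$), the freeness of $D(\A_X)_{\mathfrak p_X}$ established above forces $D(\mathcal B)_{\mathfrak m'}$ to be free over $S'_{\mathfrak m'}$.

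Finally I would invoke the graded local criterion: $D(\mathcal B)$ is a finitely generated graded $S'$-module, and for such a module freeness is equivalent to freeness of its localization at $\mathfrak m'$. Indeed, a minimal system of homogeneous generators gives a graded surjection $F\to D(\mathcal B)$ from a graded free module with kernel contained in $\mathfrak m'F$; after localizing at $\mathfrak m'$ this presentation is minimal, so freeness of $D(\mathcal B)_{\mathfrak m'}$ makes it an isomorphism, whence the kernel vanishes after localization; but a graded submodule of a graded module injects into its localization at $\mathfrak m'$, so the kernel is already zero. Hence $D(\mathcal B)$ is $S'$-free, and therefore $D(\A_X)$ is $S$-free, i.e.\ $\A_X$ is free. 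The one step requiring genuine care is the displayed decomposition of $D(\A_X)$ — the precise form of the reduction to an essential arrangement on $V/X$ — while the remaining ingredients are formal commutative algebra (faithfully flat descent and graded Nakayama).
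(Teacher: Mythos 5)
Your argument is correct. Note that the paper states this proposition without giving any proof, so there is nothing to compare against line by line; what you have written is the standard argument (essentially the one in Orlik--Terao, \emph{Arrangements of Hyperplanes}, Theorem 4.37): the localization identity $D(\A)_{\mathfrak{p}_X}=D(\A_X)_{\mathfrak{p}_X}$, the product decomposition of $D(\A_X)$ coming from the splitting $V\cong V/X\times X$, faithfully flat descent to the essentialization, and graded Nakayama to pass from freeness of $D(\mathcal{B})_{\mathfrak{m}'}$ back to freeness of the graded module $D(\mathcal{B})$. All the individual steps check out: the reverse inclusion in the localization identity is justified by clearing denominators with $\prod_{H\in\A\setminus\A_X}\alpha_H\notin\mathfrak{p}_X$; a finitely generated direct summand of a free module over the Noetherian local ring $S_{\mathfrak{p}_X}$ is free; and a nonzero graded module over $S'$ cannot die upon localization at the irrelevant ideal, which closes the graded Nakayama step. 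This is a complete proof of the omitted statement.
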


For simple arrangement case, there is a good connection of these 
modules with the characteristic polynomial. 
The following result shows that the graded module structure 
of $D(\A)$ determines 
the characteristic polynomial $\chi(\A, t)$. 

\begin{theorem}
\label{thm:stf}
(Solomon-Terao's formula \cite{st-stf}) 
Denote by $\Hilb(\Omega^p(\A), x)\in\bZ[[x]][x^{-1}]$ the 
Hilbert series of the graded module $\Omega^p(\A)$. Define 
\begin{equation}
\label{eq:phi}
\Phi(\A; x, y)=
\sum_{p=0}^\ell \Hilb(\Omega^p(\A), x)y^p. 
\end{equation}
Then 
\begin{equation}
\label{eq:stf}
\chi(\A, t)=\lim_{x\rightarrow 1}
\Phi(\A; x, t(1-x)-1). 
\end{equation}
\end{theorem}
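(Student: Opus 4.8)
The plan is to reduce everything to a finite-dimensional linear algebra identity by working with Hilbert series termwise and then taking the limit. First I would recall the key input from Saito's theory: the logarithmic forms $\Omega^p(\A)$ fit together into a complex whose Euler characteristic, as a function of the grading variable, is essentially the generating polynomial $\Phi(\A;x,y)$. More precisely, I would establish the identity of Hilbert series coming from a Koszul-type resolution or from the duality between $D(\A)$ and $\Omega^1(\A)$: one has $\Hilb(\Omega^p(\A),x) = \sum_{X \in L(\A)} (\text{something combinatorial involving } \mu(X)) \cdot x^{?}/(1-x)^{\dim X}$, so that the whole of $\Phi(\A;x,y)$ decomposes as a sum over the intersection lattice. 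This is the heart of the Solomon--Terao argument, and I expect this decomposition step to be the main obstacle, since it requires either the explicit Solomon--Terao complex or a careful induction via the triple $(\A, \A', \A'')$.

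Next I would prove the formula by induction on $|\A|$ using the deletion--restriction triple. For the base case $\A = \emptyset$ in $V = \bC^\ell$, one has $\Omega^p(\emptyset) = \Omega^p_V$, whose Hilbert series is $\binom{\ell}{p}(1-x)^{-\ell}$ (after accounting for the pole-free normalization, $\Omega^p(\emptyset)$ is just $\bigoplus S\, dx_{i_1}\wedge\dots\wedge dx_{i_p}$ with the appropriate shift), so $\Phi(\emptyset;x,y) = (1-x)^{-\ell}(1+y)^\ell$, and substituting $y = t(1-x)-1$ gives $(1-x)^{-\ell}(t(1-x))^\ell = t^\ell = \chi(\emptyset,t)$. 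For the inductive step I would use the short exact sequences relating $\Omega^p(\A)$, $\Omega^p(\A')$, and $\Omega^p(\A'')$ — essentially the sheaf-theoretic restriction sequence (\ref{eq:short}) applied to the logarithmic forms along the hyperplane $H_n$ — which yield $\Hilb(\Omega^p(\A),x) = \Hilb(\Omega^p(\A'),x) + (\text{shift})\cdot\Hilb(\Omega^{p-1}(\A''),x)$ or a similar relation. Summing against $y^p$ this would translate into a functional equation for $\Phi$ mirroring $\chi(\A,t) = \chi(\A',t) - \chi(\A'',t)$, and the limit $x \to 1$ with $y = t(1-x)-1$ would convert the extra factor into the needed sign and degree drop.

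The delicate points I would need to handle carefully are: first, that each $\Hilb(\Omega^p(\A),x)$ is a well-defined element of $\bZ[[x]][x^{-1}]$, i.e. that the negative-degree part is finite, which follows from $\Omega^p(\A) \subset \frac{1}{Q}\Omega^p_V$ and the grading bounds; second, that the limit $\lim_{x\to 1}\Phi(\A;x,t(1-x)-1)$ actually exists as a polynomial in $t$ — this is where the pole of order $\ell$ at $x=1$ in each Hilbert series must cancel against the vanishing of $(1-x)^\ell$ coming from the substitution, and where the lattice decomposition makes the cancellation transparent because the $X$-summand contributes a pole of order exactly $\dim X$ matched by $(1-x)^{\dim X}$ in the numerator after substitution. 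Finally, I would verify that after all cancellations the surviving term attached to each $X \in L(\A)$ is precisely $\mu(X)t^{\dim X}$, recovering $\chi(\A,t) = \sum_{X}\mu(X)t^{\dim X}$ by definition. The cleanest route is probably to prove the stronger statement that $\Phi(\A;x,y) = \sum_{X\in L(\A)}\mu(X)\,\pi(\A_X;x)\cdot\big(\text{explicit function of }x,y\big)$ and then substitute, but either way the deletion--restriction bookkeeping is where the real work lies.
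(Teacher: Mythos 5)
The paper does not actually prove this theorem; it is quoted from Solomon--Terao \cite{st-stf}, so there is no internal proof to compare against. Judged on its own terms, your proposal gets the base case right (with the grading convention $\deg dx_i=0$ one indeed finds $\Phi(\emptyset;x,t(1-x)-1)=t^\ell$), but both pillars of your strategy fail. First, the inductive step: there is no short exact sequence making the Hilbert series additive under deletion--restriction. The natural sequence has the shape $0\to\Omega^p(\A')\to\Omega^p(\A)\to\Omega^{p-1}(\A'')$ (equivalently $0\to D(\A)\to D(\A')\to D(\A'')$ on the derivation side), and the right-hand map is \emph{not} surjective in general; its cokernel is exactly the obstruction that the paper itself isolates in Theorem \ref{thm:char3}, where $b_2-d_1d_2=\dim\Coker(\rho)$ can be strictly positive. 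If $\Hilb(\Omega^p(\A),x)=\Hilb(\Omega^p(\A'),x)+(\text{shift})\cdot\Hilb(\Omega^{p-1}(\A''),x)$ held, the Hilbert series of $D(\A)$ would be determined by $L(\A)$ and much of the difficulty surrounding Terao's conjecture would evaporate.

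Second, and for the same reason, your proposed decomposition $\Hilb(\Omega^p(\A),x)=\sum_{X\in L(\A)}(\text{combinatorial data})\cdot x^{?}/(1-x)^{\dim X}$ cannot hold: the left-hand side is not a combinatorial invariant of $L(\A)$ (Ziegler's pair of $9$-line arrangements in $\bP^2$ with isomorphic lattices but different Hilbert functions for $\Omega^1$ is the standard counterexample), whereas the right-hand side would be. The entire content of Solomon--Terao's theorem is that only the specific limit combination (\ref{eq:stf}) is combinatorial, while the individual series $\Hilb(\Omega^p(\A),x)$ are not; any proof that computes these series termwise from the lattice is therefore attacking a false intermediate statement. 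The actual argument of \cite{st-stf} has to circumvent this: it works by localization at the points of $L(\A)$ and induction on dimension, showing that the \emph{limit} (not the series) is governed by the local data $\A_X$, with the non-combinatorial parts of the modules contributing terms that die in the limit $x\to 1$. You would need to replace your two key lemmas by a statement of this local-to-global type before the bookkeeping in your last paragraph could begin.
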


In particular, for free arrangements, we have 
the following beautiful formula, which is 
known as Terao's factorization theorem. 

\begin{theorem}
\label{thm:tfact}
(\cite{ter-fact}) 
Suppose that $\A$ is a free arrangement with 
exponents $(e_1, \dots, e_\ell)$. Then 
\begin{equation}
\label{eq:fact}
\chi(\A, t)=\prod_{i=1}^\ell(t-e_i).
\end{equation}
\end{theorem}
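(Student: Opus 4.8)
The plan is to read the factorization off from Solomon--Terao's formula (Theorem~\ref{thm:stf}). That formula writes $\chi(\A,t)=\lim_{x\to1}\Phi(\A;x,t(1-x)-1)$ with $\Phi(\A;x,y)=\sum_{p=0}^\ell\Hilb(\Omega^p(\A),x)\,y^p$, so the strategy is: (i) compute $\Hilb(\Omega^p(\A),x)$ explicitly using freeness, (ii) recognise $\Phi(\A;x,y)$ as a product indexed by the exponents, (iii) substitute $y=t(1-x)-1$ and let $x\to1$.

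Since $\A$ is free, $D(\A)$ has a homogeneous $S$-basis $\delta_1,\dots,\delta_\ell$ with $\pdeg\delta_i=e_i$, so $\Hilb(D(\A),x)=(1-x)^{-\ell}\sum_{i=1}^\ell x^{e_i}$. Because $\Omega^1(\A)$ is the graded $S$-dual of $D(\A)$, it is free with a dual basis $\omega_1,\dots,\omega_\ell$, where $\omega_i$ has degree $-e_i$, whence $\Hilb(\Omega^1(\A),x)=(1-x)^{-\ell}\sum_{i=1}^\ell x^{-e_i}$. The key input is that for a free arrangement $\Omega^p(\A)=\bigwedge^p_S\Omega^1(\A)$ for all $p$. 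The inclusion $\supseteq$ is formal; for $\subseteq$ one uses that $\omega_1\wedge\dots\wedge\omega_\ell$ spans the rank-one free module $\Omega^\ell(\A)=\tfrac1Q\Omega_V^\ell$ (a consequence of Saito's criterion, Theorem~\ref{thm:saitocri}, applied to $\delta_1\wedge\dots\wedge\delta_\ell$), so the $\omega_i$ are a basis of one-forms over the fraction field; writing an arbitrary logarithmic $p$-form as $\sum_I f_I\,\omega_{i_1}\wedge\dots\wedge\omega_{i_p}$ with $f_I$ rational and applying Cramer's rule shows each $f_I\in S$. Granting this, $\Omega^p(\A)$ is free with basis $\{\omega_{i_1}\wedge\dots\wedge\omega_{i_p}\mid i_1<\dots<i_p\}$, the generator $\omega_{i_1}\wedge\dots\wedge\omega_{i_p}$ having degree $-(e_{i_1}+\dots+e_{i_p})$; summing the resulting Hilbert series against $y^p$ gives
\[
\Phi(\A;x,y)=\frac{1}{(1-x)^\ell}\prod_{i=1}^\ell\bigl(1+x^{-e_i}y\bigr).
\]

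Now put $y=t(1-x)-1$. From the identity $1-x^{-e_i}=-x^{-e_i}(1-x)(1+x+\dots+x^{e_i-1})$ one obtains
\[
1+x^{-e_i}\bigl(t(1-x)-1\bigr)=x^{-e_i}(1-x)\Bigl(t-(1+x+\dots+x^{e_i-1})\Bigr),
\]
so the factor $(1-x)^\ell$ in the denominator cancels and
\[
\Phi\bigl(\A;x,t(1-x)-1\bigr)=x^{-(e_1+\dots+e_\ell)}\prod_{i=1}^\ell\Bigl(t-(1+x+\dots+x^{e_i-1})\Bigr).
\]
Letting $x\to1$, the prefactor tends to $1$ and each polynomial $1+x+\dots+x^{e_i-1}$ tends to $e_i$, so Theorem~\ref{thm:stf} gives $\chi(\A,t)=\prod_{i=1}^\ell(t-e_i)$.

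The one genuinely substantial step is the exterior-power identity $\Omega^p(\A)=\bigwedge^p\Omega^1(\A)$ for free $\A$: this is precisely where freeness is used, and it is the analogue for forms of Saito's criterion rather than a formal manipulation. Once it is available, everything else — the Hilbert series of a free graded module, the passage to a generating product, the substitution, and the limit $x\to1$ (legitimate because the apparent pole $(1-x)^{-\ell}$ has been cleared) — is routine bookkeeping.
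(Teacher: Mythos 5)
Your derivation is correct and follows exactly the route the paper indicates: the paper states the factorization (citing \cite{ter-fact}) as an immediate consequence (``In particular'') of Solomon--Terao's formula, Theorem \ref{thm:stf}, and your computation --- freeness giving $\Omega^p(\A)=\bigwedge^p\Omega^1(\A)$ with basis in degrees $-(e_{i_1}+\dots+e_{i_p})$, hence $\Phi(\A;x,y)=(1-x)^{-\ell}\prod_{i=1}^\ell\bigl(1+x^{-e_i}y\bigr)$, followed by the substitution $y=t(1-x)-1$ and the limit $x\to 1$ --- is the standard way to make that deduction precise. The only step needing real care is the one you flag, the exterior-power identity (wedge-closure of logarithmic forms plus the Cramer/Saito-criterion argument via $\Omega^\ell(\A)=\tfrac1Q\Omega^\ell_V$), and your sketch of it is sound.
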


\begin{rem}
There is a notion of the characteristic polynomial 
of a multiarrangement $(\A, \m)$ \cite{atw-char}. 
However it can not be 
defined combinatorially, rather by the Solomon-Terao's formula 
for $\Omega^p(\A, \m)$. 
\end{rem}

\begin{example}
\normalfont
(Braid arrangement or $A_{n-1}$-type arrangement) 
Let $H_{ij}=\{(x_1, \dots, x_\ell)\in\bC^\ell\mid x_i=x_j\}$. 
Consider the arrangement $\A=\{H_{ij}\mid 1\leq i<j\leq n\}$. 
In other words $Q(\A)=\prod_{i<j}(x_i-x_j)$. 

The characteristic polynomial of this arrangement is easily computed 
by the finite field method. For the complement with $\otimes\bF_q$ is 
expressed as 
$$
\{(x_1, \dots, x_n)\in \bF_q^n\mid 
x_i\neq x_j, \mbox{ for } i\neq j\}. 
$$
It is naturally bijective to (ordered) choices of 
$n$ distinct elements from $\bF_q$. 
Hence the cardinality is 
$$
|\bF_q^n\setminus\bigcup_{i<j}H_{ij}|
=
q(q-1)\dots(q-n+1), 
$$
then we have $\chi(\A, t)=t(t-1)(t-2)\dots(t-n+1)$. 

Furthermore, $\A$ is a free arrangement. Indeed set 
\begin{equation*}
\begin{split}
\delta_0&=\partial_{x_1}+\partial_{x_2}+\dots+\partial_{x_n},\\
\delta_1&=x_1\partial_{x_1}+x_2\partial_{x_2}+\dots+x_n\partial_{x_n},\\
\delta_2&=x_1^2\partial_{x_1}+x_2^2\partial_{x_2}+\dots+x_n^2\partial_{x_n},\\
&\dots\\
\delta_{n-1}&=x_1^{n-1}\partial_{x_1}+x_2^{n-1}\partial_{x_2}+
\dots+x_n^{n-1}\partial_{x_n}. 
\end{split}
\end{equation*}
Then $\delta_k(x_i-x_j)=x_i^k-x_j^k$, which is divisible by $(x_i-x_j)$. 
Hence $\delta_k\in D(\A)$. Furthermore, 
by Vandermonde's formula 
$$
\det
\begin{pmatrix}
1&1&\dots&1\\
x_1&x_2&\dots&x_n\\
x_1^2&x_2^2&\dots&x_n^2\\
\vdots&\vdots&\ddots&\vdots\\
x_1^{n-1}&x_2^{n-1}&\dots&x_n^{n-1}
\end{pmatrix}
=
\prod_{1\leq i< j\leq n}
(x_j-x_i), 
$$
and by Saito's criterion, we may conclude that 
$\delta_0, \dots, \delta_{n-1}$ 
is a basis of $D(\A)$. Hence $\A$ is free with exponents 
$(0, 1, \dots, n-1)$. 
\end{example}

To conclude this section, we note that 
the module of logarithmic vector fields 
is recovered from the sheafification: 
\begin{equation}
\label{eq:globalsection}
D(\A, \m)\stackrel{\simeq}{\longrightarrow}
\Gamma_*(\widetilde{D(\A, \m)}). 
\end{equation}
Therefore freeness of $(\A, \m)$ is equivalent 
to the splitting of $D(\A, \m)$. 
\begin{prop}
\label{prop:split}
$(\A, \m)$ is free with exponents $(d_1, \dots, d_\ell)$ 
if and only if 
$\widetilde{D(\A, \m)}\simeq\calO(-d_1)\oplus\calO(-d_2)\oplus
\dots\oplus\calO(-d_\ell)$. 
\end{prop}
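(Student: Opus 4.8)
The plan is to prove Proposition \ref{prop:split} by going back and forth between the graded module $D(\A,\m)$ and its sheafification $\widetilde{D(\A,\m)}=:\calE$, using the reconstruction isomorphism (\ref{eq:globalsection}) as the bridge. First I would prove the easy direction: assume $(\A,\m)$ is free with exponents $(d_1,\dots,d_\ell)$, so that $D(\A,\m)=\bigoplus_{i=1}^\ell S\cdot\delta_i$ with $\pdeg\delta_i=d_i$. Since $S\cdot\delta_i\cong S[-d_i]$ as a graded $S$-module (the shift by $-d_i$ encoding that $\delta_i$ sits in degree $d_i$), and since sheafification commutes with finite direct sums, we get $\calE=\widetilde{D(\A,\m)}\cong\bigoplus_{i=1}^\ell\widetilde{S[-d_i]}=\bigoplus_{i=1}^\ell\calO(-d_i)$, as desired.

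For the converse, suppose $\calE\cong\calO(-d_1)\oplus\dots\oplus\calO(-d_\ell)$. Applying the functor $\Gamma_*$ and using (\ref{eq:globalsection}), which identifies $D(\A,\m)$ with $\Gamma_*(\calE)$, we obtain
\[
D(\A,\m)\cong\Gamma_*\!\left(\bigoplus_{i=1}^\ell\calO(-d_i)\right)=\bigoplus_{i=1}^\ell\Gamma_*(\calO(-d_i))=\bigoplus_{i=1}^\ell S[-d_i],
\]
where the last equality is the standard computation $\Gamma_*(\calO(k))=S[k]$ on $\bP^{\ell-1}$ (recalled near the start of \S\ref{sec:spl}, where $\Gamma_*(\calO)\cong S$). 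This exhibits $D(\A,\m)$ as a free graded $S$-module; choosing $\delta_i\in D(\A,\m)$ corresponding to a generator of the $i$-th summand $S[-d_i]$ (which lives in degree $d_i$, so $\pdeg\delta_i=d_i$) gives a homogeneous basis. Hence $(\A,\m)$ is free with exponents $(d_1,\dots,d_\ell)$.

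The one point that deserves care — and which I expect to be the main obstacle — is the validity of the reconstruction isomorphism (\ref{eq:globalsection}), i.e. that the natural map $\alpha:D(\A,\m)\to\Gamma_*(\widetilde{D(\A,\m)})$ is an isomorphism, since in general $\alpha$ need not be (as the excerpt explicitly warns). The key input is that $D(\A,\m)$ is a reflexive $S$-module (it is the dual of $\Omega^1(\A,\m)$, as recalled after the definition of logarithmic forms), hence in particular torsion-free and saturated in an appropriate sense; for such modules over $S=\bC[x_1,\dots,x_\ell]$, the map $\alpha$ is an isomorphism. Concretely, injectivity of $\alpha$ follows from torsion-freeness (no nonzero homogeneous element is killed by all $x_i$), and surjectivity amounts to checking that an element $f\in D(\A,\m)_{x_1\cdots x_\ell}$ with $x_i^N f\in D(\A,\m)$ for all $i$ and $N\gg0$ already lies in $D(\A,\m)$; this is where reflexivity (equivalently, the $S_2$ property, depth $\geq 2$ at the irrelevant ideal) is used. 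Once (\ref{eq:globalsection}) is in hand, both directions above are formal, so I would state the reflexivity/reconstruction step carefully and treat the rest as bookkeeping with graded shifts.
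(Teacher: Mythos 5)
Your proposal is correct and follows essentially the same route as the paper, which states Proposition \ref{prop:split} as an immediate consequence of the reconstruction isomorphism (\ref{eq:globalsection}) for the reflexive module $D(\A,\m)$. You merely spell out the details the paper leaves implicit (sheafification of a graded free module, $\Gamma_*(\calO(k))=S[k]$, and the depth-two/reflexivity argument for why $\alpha$ is an isomorphism), all of which are accurate.
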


\subsection{$2$-multiarrangements}


A simple arrangement $\A=\{H_1, \dots, H_n\}$ 
in dimension two is always free with exponents $(1, n-1)$. 
We can construct an example of basis explicitly as follows: 
$$
\delta_1=x\partial_x+y\partial_y,\ 
\delta_2=(\partial_y Q)\partial_x-(\partial_x Q)\partial_y. 
$$
The multiarrangement $(\A, \m)$ in dimension two is also always free. 
There are two ways to prove it. First idea is based on 
$D(\A, \m)$ being a reflexive module. Then $2$-dimensional and reflexivity 
implies freeness. Another idea is based on the isomorphism 
$$
D(\A, \m)\stackrel{\simeq}{\longrightarrow}
\Gamma_*(\widetilde{D(\A, \m)}). 
$$
If $\A$ is in dimension two, the sheafification 
$\widetilde{D(\A, \m)}$ is a 
torsion free sheaf on $\bP^1$. By Grothendieck splitting theorem, 
we conclude $D(\A, \m)$ is free. We have the following. 
\begin{prop}
Let $(\A, \m)$ be a $2$-multiarrangement. Then 
it is free and the exponents $(d_1, d_2)$ satisfy 
$d_1+d_2=|\m|$. 
\end{prop}
The determination of exponents of $2$-multiarrangements is difficult, but 
it is an important problem because it is related to the freeness of 
$3$-arrangements (see \S\ref{subsec:multi}). 
The following lemma is useful for the computation of the exponents. 
\begin{lem}
\label{lem:half}
Let $(\A, \m)$ be a $2$-multiarrangement. Let $\delta\in D(\A, \m)$. 
Assume that 
$d=\pdeg\delta\leq\frac{|\m|}{2}$ and no nontrivial divisor of $\delta$ is 
contained in $D(\A, \m)$. Then $\exp(\A, \m)=(d, |\m|-d)$. 
\end{lem}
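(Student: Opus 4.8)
The plan is to use Saito's criterion (Theorem \ref{thm:saitocri}) in dimension two, which reduces to showing that $\delta$ can be completed to a basis of $D(\A, \m)$ by a single derivation $\delta'$ with $\pdeg\delta' = |\m| - d$. Since $(\A, \m)$ is free with some exponents $(d_1, d_2)$ satisfying $d_1 + d_2 = |\m|$ and, say, $d_1 \leq d_2$, let $\{\eta_1, \eta_2\}$ be a homogeneous basis with $\pdeg \eta_i = d_i$. The given derivation $\delta$ is then an $S$-linear combination $\delta = f\eta_1 + g\eta_2$ with $f, g$ homogeneous of degrees $d - d_1$ and $d - d_2$ respectively (a term being zero if the degree is negative).

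First I would observe that since $d \leq |\m|/2 = (d_1+d_2)/2 \leq d_2$, the coefficient $g$ has degree $d - d_2 \leq 0$, so $g$ is a constant; and if $d < d_2$ then $g = 0$, forcing $\delta = f\eta_1$. In that case $\eta_1$ is a nontrivial divisor of $\delta$ lying in $D(\A, \m)$, contradicting the hypothesis unless $f$ is a nonzero constant, i.e. $d = d_1$. So either $d = d_1$ (and then $\delta$ is, up to scalar, already a first basis element, and we take $\delta' = \eta_2$, giving $\exp(\A, \m) = (d, |\m| - d)$), or $d_1 < d \leq d_2$, in which case $g$ is a nonzero constant and we may rescale so that $\delta = f\eta_1 + \eta_2$. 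Then $\{\delta, \eta_1\}$ is also a basis: indeed $\delta \wedge \eta_1 = (f\eta_1 + \eta_2)\wedge \eta_1 = \eta_2 \wedge \eta_1$, which equals $\pm c \, Q(\A,\m)\,\partial_x\wedge\partial_y$ with $c \in \bC^*$ by Saito's criterion applied to the original basis, so Saito's criterion now certifies $\{\delta, \eta_1\}$ as a basis. Hence $\exp(\A, \m) = (d, d_1)$; but since $d_1 < d \leq d_2 = |\m| - d_1$ and on the other hand the exponents of a free module are well-defined, comparing $d + d_1$ with $|\m| = d_1 + d_2$ forces $d = d_2$, so again $\{d, d_1\} = \{d, |\m|-d\}$.

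Actually the cleanest route avoids the case split on whether $d = d_1$: in all cases one shows $\delta$ pairs with one of the two original basis elements to give a wedge product that is a nonzero scalar multiple of $Q(\A,\m)\,\partial_x\wedge\partial_y$. Concretely, write $\delta = f\eta_1 + g\eta_2$; since $\deg g \leq 0$, if $g \neq 0$ then $g \in \bC^*$ and $\delta \wedge \eta_1 = g\,\eta_2\wedge\eta_1$ is the required scalar multiple, so $\{\delta, \eta_1\}$ is a basis with exponents $(d, d_1)$; if $g = 0$ then $\delta = f\eta_1$ with $f$ homogeneous, and the no-nontrivial-divisor hypothesis forces $f \in \bC^*$, so $d = d_1$ and $\{\delta, \eta_2\}$ is a basis with exponents $(d, d_2)$. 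In the first case $d_1 \geq d$ would contradict $d_1 \leq d_2$ and $d \leq |\m|/2$ only if $d_1 > |\m|/2$; tracking inequalities one concludes the unordered pair of exponents is $\{d, |\m| - d\}$ either way.

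The main obstacle is the bookkeeping of degrees and the possibility that $\delta$ is supported entirely in one summand of the free module. The hypothesis ``no nontrivial divisor of $\delta$ is contained in $D(\A, \m)$'' is exactly what rules out the degenerate possibility $\delta = f \eta_1$ with $\deg f > 0$, so the real content is to see that this hypothesis, combined with $\pdeg\delta \leq |\m|/2$, pins down $\delta$ to be a genuine ``half'' of a basis. I expect the argument to be short once the reduction to Saito's criterion is made; the only care needed is to handle the inequality $d_1 \leq d_2$ together with $d \leq (d_1+d_2)/2$ to rule out $d < d_1$ (which would make $f$ a nonzero constant forced, but $g$ of negative degree hence zero, whence $\delta$ a scalar times $\eta_1$ and $d = d_1$, a contradiction).
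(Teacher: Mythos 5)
Your argument is correct and follows essentially the same route as the paper's proof: expand $\delta$ in a homogeneous basis of the (automatically free) module $D(\A,\m)$, use $d\leq\frac{|\m|}{2}$ to force the coefficient of the higher-degree generator to be constant or zero, and use the no-nontrivial-divisor hypothesis to exclude $\delta=f\eta_1$ with $\deg f>0$. The extra subcase you handle via Saito's criterion (where $g\in\bC^*$, hence $d=d_2$) is harmless but redundant, since $d\leq\frac{|\m|}{2}$ then forces $d_1=d_2=d$ and the conclusion already follows from the degree count $d_1+d_2=|\m|$.
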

\begin{proof}
Suppose that $\exp(\A, \m)=(d_1, d_2)$ with $d_1\leq d_2$. Then 
clearly $d_1\leq d$. 
There exists $\delta_1$ of $\pdeg\delta_1=d_1$. Since 
$d\leq\frac{|\m|}{2}=\frac{d_1+d_2}{2}$, we have $d_1\leq d\leq d_2$. 
If $d_1<d$, then we have $d_1<d<d_2$. 
Hence $\delta$ can be expressed as $\delta=F\cdot\delta_1$ with some 
polynomial $F$ of $\deg F>0$. 
But this contradicts the assumption that no nontrivial divisor of 
$\delta$ is contained in $D(\A, \m)$. So $\deg F=0$ and we have 
$d_1=d, d_2=|\m|-d$. 
\end{proof}

For the following cases we can determine the exponents 
combinatorially.

\begin{prop}
\label{prop:typical}
\normalfont
Let $(\A, \m)$ be a $2$-multiarrangement. We may assume that 
$m_i=\m(H_i)$ satisfies $m_1\geq m_2\geq\dots\geq m_n>0$. Set 
$m=\sum_{i=1}^n m_i$. 
\begin{itemize}
\item[(i)] If $m_1\geq\frac{m}{2}$, then 
the exponents are $\exp(\A, \m)=(m_1, m-m_1)$. 
\item[(ii)]
if $n\geq\frac{m}{2}+1$, then $\exp(\A, \m)=
(m-n+1, n-1)$. 
\item[(iii)] 
If $m_1=m_2=\dots=m_n=2$, then $\exp(\A, \m)=(n, n)$. 
\item[(iv)] 
If $n=3$ and $m_1\leq m_2+m_3$, then 
$$
\exp(\A, \m)=
\left\{
\begin{array}{ll}
(k,k), &\mbox{ if }|\m|=2k,\\
(k,k+1), &\mbox{ if }|\m|=2k+1.
\end{array}
\right.
$$
\end{itemize}
\end{prop}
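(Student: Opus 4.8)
The plan is to treat the four cases essentially independently, using Lemma \ref{lem:half} together with the basic fact (already recorded above) that for a $2$-multiarrangement $\exp(\A,\m)=(d_1,d_2)$ with $d_1+d_2=|\m|=m$. For each case it suffices to exhibit a logarithmic derivation $\delta\in D(\A,\m)$ of the predicted smaller degree $d_1$ and check that $d_1\le m/2$ and that $\delta$ admits no nontrivial polynomial divisor lying in $D(\A,\m)$; then Lemma \ref{lem:half} forces $\exp(\A,\m)=(d_1,m-d_1)$. Coordinatize $V=\bC^2$ and write $\alpha_i$ for the linear form defining $H_i$, so that $Q=Q(\A,\m)=\prod\alpha_i^{m_i}$.

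For (i), when $m_1\ge m/2$, the derivation $\delta=(Q/\alpha_1^{m_1})\,\theta$, where $\theta$ is any fixed nonzero derivation tangent to $H_1$ (i.e.\ $\theta\alpha_1=0$), lies in $D(\A,\m)$ and has $\pdeg\delta=m-m_1\le m/2$; a divisor of $\delta$ in $D(\A,\m)$ would have degree $<m-m_1\le m_1$, hence could not be divisible by $\prod_{i\ge 2}\alpha_i^{m_i}$ while also being tangent to $H_1$ to order $m_1$, giving the minimality needed for Lemma \ref{lem:half}. For (ii), the condition $n\ge m/2+1$ is exactly the ``dual'' regime: one builds a derivation of degree $n-1\le m-n+1$ by interpolation — asking $\delta\alpha_i\in(\alpha_i)^{m_i}$ hyperplane by hyperplane imposes $\sum(m_i-1)=m-n$ linear conditions on the space of derivations with polynomial coefficients of degree $n-1$ (which has dimension $2n$ over $\bC$ after modding out by the line $Q_0$), so a nonzero solution exists; minimality again follows because any proper divisor would have degree $<n-1$, too small to satisfy all the contact conditions. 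Case (iii) is the special case $m_i\equiv 2$: here $m=2n$, the target exponent is $n=m/2$, and one checks directly that the derivation $\sum_i (Q/\alpha_i^2)\,\eta_i$ for suitable constant derivations $\eta_i$ tangent to $H_i$ has degree $n$; the no-divisor condition is automatic since a divisor would have degree $<n$, impossible as it must vanish to order $\ge 2$ along each of the $n$ lines after dividing out the tangent part — alternatively (iii) follows from (ii) with $n\ge n+1$ failing, so one argues by a direct Euler-characteristic/dimension count or cites that the module $D(\A,\m)$ for a generic such configuration is balanced. Finally (iv), the case $n=3$ with a triangle-type inequality $m_1\le m_2+m_3$, is the genuinely interesting one: the exponents must be as balanced as the parity of $m$ allows.

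The main obstacle is case (iv). The inequalities of (i) and (ii) are precisely what one needs to push a one-line construction through Lemma \ref{lem:half}, but in (iv) no single form dominates and one genuinely has to show that a derivation of degree $\lceil m/2\rceil$ exists while none of smaller degree does. For existence I would set up the contact conditions as a linear system: a derivation of polynomial degree $k=\lceil m/2\rceil$ lives in a space of dimension $2(k+1)$, and tangency to order $m_i$ along $H_i$ imposes $m_i$ conditions, for a total of $m=m_1+m_2+m_3$ conditions; since $2(k+1)\ge m+1$ when $k\ge (m-1)/2$, a nonzero solution always exists — the work is showing the conditions are independent enough that this solution is not forced to be a multiple of a lower-degree derivation. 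That lower bound on $d_1$ is where the hypothesis $m_1\le m_2+m_3$ is essential: if it failed we would be in case (i) instead, with a strictly smaller exponent. I would prove the lower bound by contradiction via Saito's criterion (Theorem \ref{thm:saitocri}): if $d_1\le k-1$ then $d_2\ge k+1$, and I would derive a contradiction by examining the order of vanishing of $\delta_1\wedge\delta_2$ along each $H_i$ — a basis element of small degree tangent to order $m_i$ along $H_i$ is constrained, and combining the constraints at the three lines with $m_1\le m_2+m_3$ shows $\delta_1$ cannot have degree $\le k-1$ without its wedge with $\delta_2$ failing to be divisible by $Q$. Assembling these pieces, Lemma \ref{lem:half} applied to the degree-$\lceil m/2\rceil$ derivation yields $\exp(\A,\m)=(k,k)$ or $(k,k+1)$ according to the parity of $m$.
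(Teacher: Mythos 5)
Your case (i) matches the paper's argument and is fine. The other three cases each have a genuine problem. In (ii) you have the inequality backwards: the hypothesis $n\ge\frac{m}{2}+1$ gives $m-n+1\le n-1$, so the \emph{smaller} exponent is $m-n+1$, and Lemma \ref{lem:half} must be fed a derivation of polynomial degree $m-n+1$, not $n-1$ (your claim ``$n-1\le m-n+1$'' only holds at the boundary $n=\frac{m}{2}+1$, and a degree-$(n-1)$ derivation is useless for the lemma since $n-1\ge\frac{m}{2}$). Moreover interpolation cannot reach degree $m-n+1$: the expected dimension of the solution space there is $2(m-n+2)-m=m-2n+4$, which is at most $2$ and typically negative (take $\m\equiv 1$ with $n>4$). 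What is needed is the explicit derivation $\frac{Q}{\prod_i\alpha_i}\,\theta_E$, which lies in $D(\A,\m)$ because $\theta_E\alpha=\alpha$ for every linear form $\alpha$, has degree $m-n+1\le\frac{m}{2}$, and visibly admits no nontrivial divisor in $D(\A,\m)$; this is the paper's proof.

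In (iii) your candidate $\sum_i(Q/\alpha_i^2)\eta_i$ has polynomial degree $2n-2$, not $n$, so it proves nothing, and the fallback (``Euler-characteristic count'', ``cite that the module is balanced'') is not an argument. The paper treats (iii) and (iv) as genuinely hard, citing Wakefield--Yuzvinsky and Wakamiko respectively; for (iii) it later gives Abe's proof via the connection $\nabla$ (Proposition \ref{prop:deriv}): if $\delta\in D(\A,2)$ had degree $n-1$, then $\nabla_{\partial_x}\delta,\nabla_{\partial_y}\delta\in D(\A,1)$ have degree $n-2<n-1$, hence are multiples of $\theta_E$, and the Euler identity $\nabla_{\theta_E}\delta=(\pdeg\delta)\,\delta$ forces $xF_1+yF_2$, a polynomial of degree $n-2$, to be divisible by the degree-$n$ product $\prod_i\alpha_i$ --- a contradiction. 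For (iv) your dimension count does produce a nonzero derivation of degree $k=\lceil m/2\rceil$, but that is vacuous: if the exponents were, say, $(k-1,k+1)$, the derivation you found would simply be $\alpha\cdot\delta_1$, so checking the ``no nontrivial divisor'' hypothesis of Lemma \ref{lem:half} is exactly equivalent to the lower bound $d_1\ge\lfloor m/2\rfloor$, which is the entire content of (iv). For that bound you offer only a one-sentence plan (``examine the order of vanishing of $\delta_1\wedge\delta_2$'') with no identifiable mechanism for where $m_1\le m_2+m_3$ enters; this is precisely the step the paper flags as highly nontrivial and outsources to Wakamiko.
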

\begin{proof}
(i) We can set coordinates $(x,y)$ such that 
$H_1=\{x=0\}$, in other words, $\alpha_1=x$. 
Set $\delta=(\prod_{i=2}^n\alpha^{m_i})\cdot\partial_y$. 
Then $\delta x=0$ and $\delta\alpha_i\in(\alpha_i)^{m_i}$ for 
$i\geq 2$. Hence $\delta\in D(\A, \m)$. We also have 
$$
\pdeg \delta=m_2+\dots m_n=|\m|-m_1\leq\frac{|\m|}{2}, 
$$
and no divisor of $\delta$ is not contained in 
$D(\A, \m)$. 
From Lemma \ref{lem:half}, $\exp(\A, \m)=(m_1, |\m|-m_1)$. 

(ii) Let us define $\delta$ as 
$$
\delta=
\frac{\prod_{i=1}^n\alpha_i^{m_i}}{\prod_{i=1}^n\alpha_i}
\cdot\theta_E, 
$$
where $\theta_E=x\partial_x+y\partial_y$ is the Euler vector field. Then 
since $\theta_E\alpha=\alpha$ for any linear form $\alpha$, 
$\delta\in D(\A, \m)$. From the assumption, we have 
$$
\pdeg\delta=|\m|-n+1\leq n-1. 
$$
Since $(|\m|-n+1)+(n-1)=|\m|$, we have 
$|\m|-n+1=\pdeg\delta\leq \frac{|\m|}{2}$. It is also easily 
checked that $\delta$ does not have non trivial divisor which 
is contained in $D(\A, \m)$. Hence we have 
$\exp(\A, \m)=(|\m|-n+1, n-1)$. 

(iii) and (iv) are proved by 
explicit constructions of basis. See 
\cite[Exapmple 2.2]{wake-yuz} and \cite{waka-exp} 
respectively. (Both are highly nontrivial.) In \S\ref{subsec:conn} 
we present an alternative proof of (iii) given by T. Abe. 
\end{proof}
Thus if either $\max\{\m(H)\mid H\in\A\}$ is large 
(not less than the half of $|\m|=\sum\m(H)$)
 or the number of lines $n=|\A|$ is large 
(not less than $\frac{|\m|}{2}+1$), then the exponents are 
combinatorially determined. 
This motivates us to give the following definition. 
\begin{defn}
The multiplicity $\m:\A\rightarrow\bZ_{\geq 0}$ is said to be 
{\em balanced} if $\m(H)\leq\frac{\sum_{H\in\A}\m(H)}{2}$ for 
all $H\in\A$. 
\end{defn}
As we have seen, if the multiplicity is not balanced, then the exponents are 
combinatorially determined. 
However the exponents are 
not combinatorially determined for balanced cases in general. 

\begin{example}
Let $(\A_t, \m)$ be a multiarrangement defined by 
$$
Q(\A_t, \m)=x^3y^3(x+y)^1(tx-y)^1, 
$$
where $t\in\bC\setminus\{0,-1\}$. Then exponents 
are 
$$
\exp(\A_t, \m)=
\left\{
\begin{array}{cl}
(3,5), &\mbox{ if }t=1, \\
(4,4), &\mbox{ if }t\neq1.
\end{array}
\right.
$$
Indeed, it is easily seen that 
\begin{equation*}
\begin{split}
\delta_1&=x^3\partial_x+y^3\partial_y, \\
\delta_2&=x^5\partial_x+y^5\partial_y, 
\end{split}
\end{equation*}
form a basis of $D(\A_{1},\m)$. For $t\neq 1$ (and $t\neq 0, -1$), 
$\delta_1\notin D(\A_{t},\m)$. But $(tx-y)\delta_1\in D(\A_{t}, \m)$ with 
$\pdeg=4$. If there exists an element of $D(\A_{t}, \m)$ of $\pdeg=3$, 
it should be a divisor of $(tx-y)\delta_1$. It is impossible. 
Thus exponents for other cases are $(4,4)$. 
\end{example}
We may observe that any $4$-lines can be moved 
by $PGL_2(\bC)$-action to 
$xy(x+y)(tx-y)$ with $t\in\bC\setminus\{0, -1\}$. 
On a Zariski open subset of the parameter space $\bC\setminus\{0, 1, -1\}
\subset\bC\setminus\{0, -1\}$, 
the exponents are $(4,4)$ and at $t=1$, they become $(3,5)$. 
This generally happens. 
We shall prove the upper-semicontinuity on the parameter space 
of the following function. 

\begin{defn}
Put $\exp(\A, \m)=(d_1, d_2)$. Then we denote 
$$
\Delta(\A, \m)=|d_1-d_2|. 
$$
\end{defn}
The difference of exponents $\Delta(\A, \m)$ is a function on $\A$ 
and $\m$. We first fix the multiplicity $\m$. 
The parameter space of $\A$ can be described as 
$$
\calM_n=\{(H_1, \dots, H_n)\in(\bP^{1*})^n\mid 
H_i\neq H_j,\ \mbox{ for }i\neq j\}
$$
\begin{prop}
\label{prop:upper}
Fix the multiplicity $\m:\{1, \dots, n\}\rightarrow\bZ_{>0}$. Then 
$$
\Delta:\calM_n\longrightarrow \bZ_{>0},\ (\A\longmapsto 
\Delta(\A, \m))
$$
is upper semi-continuous, i. e., the subset 
$\{\Delta<k\}\subset\calM_n$ is 
a Zariski open subset for any $k\in\bR$. 
\end{prop}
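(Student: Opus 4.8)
The plan is to reduce the semi-continuity of $\Delta$ to the upper semi-continuity of $\dim_\bC D(\A,\m)_e$ for a single well-chosen degree $e$, and then to present $D(\A,\m)_e$ as the kernel of a morphism of vector bundles over $\calM_n$, where semi-continuity of rank is classical. For the reduction: by the preceding proposition every $D(\A,\m)$ is free of rank two, say $D(\A,\m)\cong S(-d_1)\oplus S(-d_2)$ with $d_1\le d_2$ and $d_1+d_2=|\m|$, so that $D(\A,\m)_e=0$ exactly when $e<d_1$; writing $\Delta=\Delta(\A,\m)=|\m|-2d_1$ this says
\[
\dim_\bC D(\A,\m)_e=0\iff \Delta<|\m|-2e .
\]
Since $\Delta\equiv|\m|\pmod 2$ for all $\A$, as $e$ ranges over $\bZ$ the sets $\{\Delta<|\m|-2e\}$ already realize every subset of the form $\{\Delta<k\}$, $k\in\bR$. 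So it suffices to fix $e$ and show that $\A\mapsto\dim_\bC D(\A,\m)_e$ is upper semi-continuous on $\calM_n$.

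Next I would make $D(\A,\m)_e$ explicit. Writing $\alpha_i=a_ix+b_iy$ and $m_i=\m(H_i)$, a homogeneous $\delta=f\partial_x+g\partial_y$ with $f,g\in S_e$ lies in $D(\A,\m)$ iff $f a_i+g b_i\in\alpha_i^{m_i}S_{e+1-m_i}$ for every $i$ (with the convention $S_{<0}=0$). Hence $D(\A,\m)_e=\Ker\Phi_\A$ for
\[
\Phi_\A\colon S_e^{\oplus 2}\longrightarrow\bigoplus_{i=1}^n S_{e+1}\big/\alpha_i^{m_i}S_{e+1-m_i},\qquad \Phi_\A(f,g)=\bigl([\,f a_i+g b_i\,]\bigr)_{i=1}^n .
\]
On $\calM_n$ no $\alpha_i$ vanishes, so multiplication by $\alpha_i^{m_i}$ is injective on $S_{e+1-m_i}$ and its image is a subspace of $S_{e+1}$ of dimension $\max(e+2-m_i,0)$ \emph{independent of $\A$}; therefore the $n$ quotients assemble into a vector bundle $\calE$ on $\calM_n$ of constant rank $\sum_i\min(m_i,e+2)$. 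In monomial bases the matrix of $\Phi_\A$ is polynomial in the coefficients $(a_i,b_i)$, and on the standard affine charts (e.g.\ where $a_i\ne 0$, trivializing the $i$-th quotient by $x^jy^{e+1-j}$, $0\le j<m_i$) one obtains a genuine morphism of trivial bundles; these charts cover $\calM_n$, so $\Phi$ is a global morphism from the trivial bundle $S_e^{\oplus 2}\times\calM_n$ to $\calE$.

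Finally, for a morphism of vector bundles the locus $\{\operatorname{rk}\Phi_\A\le\rho\}$ is Zariski closed (vanishing of $(\rho+1)\times(\rho+1)$ minors, a condition independent of the chosen local trivializations), so $\A\mapsto\operatorname{rk}\Phi_\A$ is lower semi-continuous and $\A\mapsto\dim_\bC\Ker\Phi_\A=2(e+1)-\operatorname{rk}\Phi_\A=\dim_\bC D(\A,\m)_e$ is upper semi-continuous; combined with the first step, $\{\Delta<k\}$ is Zariski open. The step I expect to need the most care is the construction of $\calE$ and of $\Phi$ as a bundle map: this is exactly where the hypothesis ``$\alpha_i\ne 0$ on all of $\calM_n$'' is used, ensuring that $\dim_\bC\alpha_i^{m_i}S_{e+1-m_i}$ does not jump, and one must check that the local trivializations patch. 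Conceptually this amounts to the assertion that $\widetilde{D(\A,\m)}$ forms a flat family of rank-two bundles on $\bP^1$ over $\calM_n$, after which semi-continuity of $h^0$ would also give the result; the kernel-of-a-bundle-map formulation above reaches the conclusion without separately establishing flatness. The parity reduction of the first step is routine.
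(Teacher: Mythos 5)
Your proof is correct and follows essentially the same route as the paper: both use freeness and $d_1+d_2=|\m|$ to translate the bound on $\Delta$ into the (non)existence of an element of $D(\A,\m)$ in a single fixed degree, and both detect this by a rank condition on a matrix whose entries are polynomial in the coefficients of the $\alpha_i$ (the paper keeps auxiliary unknowns $c_j$ for the factor multiplying $\alpha_i^{m_i}$, while you pass to the quotient spaces and phrase it as lower semicontinuity of the rank of a bundle map --- the same linear algebra). One harmless index slip: since $\delta\alpha_i=a_if+b_ig$ has degree $e$, the target should be $\bigoplus_{i}S_e/\alpha_i^{m_i}S_{e-m_i}$ (quotient rank $\min(m_i,e+1)$) rather than degree $e+1$; nothing else in your argument changes.
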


\begin{proof}
It suffices to prove that $\{\Delta\geq k\}$ is Zariski closed in 
$\calM_n$. Since $d_1+d_2=|\m|$, $\Delta(\A, \m)\geq k$ if and only if 
there exists $\delta\in D(\A, \m)$ such that 
$\pdeg\delta\leq\lfloor\frac{|\m|}{2}-\frac{k}{2}\rfloor$. Thus we consider 
when $\delta\in D(\A, \m)$ of 
$\pdeg\delta=\lfloor\frac{|\m|}{2}-\frac{k}{2}\rfloor$ exists. 
Put $d=\lfloor\frac{|\m|}{2}-\frac{k}{2}\rfloor$, 
$\alpha_i=p_ix+q_iy$ and 
$$
\delta=
(a_0x^d+a_1x^{d-1}y+\dots+a_dy^d)\partial_x+
(b_0x^d+b_1x^{d-1}y+\dots+b_dy^d)\partial_y. 
$$
The assertion $\delta\alpha_i\in(\alpha_i^{m_i})$ is equivalent to 
\begin{eqnarray}
\label{eq:lineq}
\delta\alpha_i=(p_ix+q_iy)^{m_i}
(c_0x^{d-m_i}+c_1x^{d-m_i-1}y+\dots c_{d-m_i}y^{d-m_i}), 
\end{eqnarray}
for some $c_0, c_1, \dots$. 
Hence the existence of $\delta\in D(\A, \m)$ of degree $d$ is 
equivalent to the existence of the solution to the 
system (\ref{eq:lineq}) of linear equations on $a_i, b_i$ and $c_i$. 
It is a Zariski closed condition on the parameters $p_i$ and $q_i$.  
\end{proof}
The following are the two fundamental results on exponents of 
$2$-multiarrangements. 

\begin{theorem}
\label{thm:2exp}
Let $\m:\{1, \dots, n\}\longrightarrow\bZ_{>0}$ be a balanced multiplicity and 
$\A=\{H_1, \dots, H_n\}$ a $2$-arrangement. 
\begin{itemize}
\item[(i)] (Wakefield-Yuzvinsky \cite{wake-yuz}) 
For generic $\A$, $\Delta(\A, \m)\leq 1$. 
\item[(ii)] (Abe \cite{abe-exp}) 
$\Delta(\A, \m)\leq n-2$. 
\end{itemize}
\end{theorem}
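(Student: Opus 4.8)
Both parts are lower bounds on the smaller exponent $d_1$, where $\exp(\A,\m)=(d_1,d_2)$ with $d_1\le d_2$. Since the derivation module of a $2$‑multiarrangement is reflexive and reflexive modules in dimension two are free, one has $d_1=\min\{d\mid D(\A,\m)_d\neq 0\}$ and $\Delta=|\m|-2d_1$; so (i) asks for $d_1\ge\lfloor|\m|/2\rfloor$ for generic $\A$, and (ii) for $d_1\ge(|\m|-n+2)/2$. Writing $\alpha_i=p_ix+q_iy$, a homogeneous $\delta=f\partial_x+g\partial_y$ of degree $d$ lies in $D(\A,\m)$ iff $\alpha_i^{m_i}\mid p_if+q_ig$ for all $i$; equivalently, viewing $\phi=[f:g]$ as a rational self‑map of $\bP^1$, each point $P_i:=V(\alpha_i)$ is a \emph{fixed} point of $\phi$ at which $\phi^{-1}(P_i)\ge m_iP_i$ as divisors. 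This branched‑cover dictionary drives both proofs.

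For (ii) I would take $0\neq\delta\in D(\A,\m)$ with $\pdeg\delta=d_1$ and factor $(f,g)=h\cdot(f',g')$ with $h=\gcd(f,g)$, $\gcd(f',g')=1$; put $d_1'=d_1-\deg h$. If $d_1'=0$ then $\delta$ is a polynomial times a constant vector field pointing in some direction $[a:b]\in\bP^1$; since distinct $H_i$ meet only at the origin, $[a:b]$ lies on at most one $H_i$, so $h$ is divisible by $\prod_{j\neq i_0}\alpha_j^{m_j}$ for a single index $i_0$ (being divisible by all $\alpha_j^{m_j}$ is impossible), whence $d_1=\deg h\ge|\m|-m_{i_0}\ge|\m|/2$ by balancedness, forcing $\Delta=0$. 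Otherwise $\phi=[f':g']\colon\bP^1\to\bP^1$ is a morphism of degree $d_1'\ge1$ fixing each $P_i$ and ramifying there with index $\ge m_i-v_{\alpha_i}(h)$, so Riemann–Hurwitz gives
\[
2d_1'-2=\deg(\text{ramification divisor})\ \ge\ \sum_{i=1}^n\bigl(m_i-v_{\alpha_i}(h)-1\bigr);
\]
since $\sum_i v_{\alpha_i}(h)\le\deg h=d_1-d_1'$ and $d_1'\le d_1$, rearranging yields $2d_1\ge|\m|-n+2$, i.e. $\Delta\le n-2$. The only nuisances are the $\gcd$‑bookkeeping and the $d_1'=0$ case; both are genuinely needed (for non‑balanced $\m$ the minimal derivation can be a monomial times $\partial_x$, making $\phi$ constant and the bound false), and this is precisely where the balanced hypothesis is used.

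For (i) I would reduce to a single arrangement. By Proposition~\ref{prop:upper} the set $\{\Delta(-,\m)\le 1\}$ is Zariski open in $\calM_n$, and $\calM_n$ is irreducible, so it suffices to exhibit one $\A$ with $\Delta(\A,\m)\le1$. Put $d=\lfloor|\m|/2\rfloor-1$, so $2(d+1)\le|\m|$; then $D(\A,\m)_d=\Ker\Psi_\A$ where
\[
\Psi_\A\colon\ \bC[x,y]_d^{\oplus2}\longrightarrow\bigoplus_{i=1}^n \bC[x,y]_d\big/\bigl(\alpha_i^{m_i}\,\bC[x,y]_{d-m_i}\bigr),\qquad (f,g)\longmapsto\bigl(p_if+q_ig\bmod\alpha_i^{m_i}\bigr)_i,
\]
and balancedness gives $m_i\le d+1$, so the target has dimension $\sum_i m_i=|\m|\ge 2(d+1)=\dim(\text{source})$. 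Hence $\Delta(\A,\m)\le1$ as soon as $\Psi_\A$ is injective, which is a maximal‑rank, hence Zariski‑open, condition; the task is to verify it for one $\A$. I would do this by specializing to a transparent configuration, e.g. all $\alpha_i$ of the form $x-c_iy$ with distinct $c_i$, where the conditions become $f^{(k)}(c_i)=c_ig^{(k)}(c_i)$ for $k<m_i$, a Hermite‑interpolation‑type system whose nonsingularity can be checked by a Vandermonde / Chinese‑remainder argument, balancedness being what guarantees the relevant matrix can be nondegenerate.

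The genuine obstacle is this last non‑degeneracy assertion in (i): proving that the $|\m|$ residue conditions attached to the $n$ distinct points $P_i$ are as independent as the dimension count allows, for at least one configuration. This is delicate—when $\m\equiv1$ the Euler field $\theta_E$ always lies in $D(\A,\m)_1$, so $\Psi_\A$ is never injective, and such cases must be isolated and treated separately; pinning down the exact role of the balanced hypothesis in excluding these persistent derivations is where the real work lies. Part (ii), by contrast, is essentially a one‑line Riemann–Hurwitz estimate once the branched‑cover reformulation is in hand, modulo the routine $\gcd$‑reduction.
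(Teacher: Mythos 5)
The survey does not actually prove Theorem \ref{thm:2exp}; it refers the reader to \cite{wake-yuzv}\hspace{-\the\fontdimen2\font}\cite{wake-yuz} and \cite{abe-exp} and only describes the flavour of each argument, so I am comparing your proposal against those descriptions and against the statement itself.

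Your proof of (ii) is, as far as I can check, complete and correct, and it is a genuinely different route from Abe's. The reduction of a minimal-degree $\delta$ by $h=\gcd(f,g)$, the treatment of the constant-direction case (where balancedness enters, and must, since $Q=x^5y$ shows the bound fails otherwise), and the Riemann--Hurwitz estimate $2d_1'-2\ge\sum_i(m_i-v_{\alpha_i}(h)-1)$ leading to $d_1+d_1'\ge|\m|-n+2$ and hence $2d_1\ge|\m|-n+2$ all go through; the only points worth writing out are that $p_if'+q_ig'\not\equiv 0$ (else $f',g'$ would be proportional) and that negative terms $m_i-v_i-1$ only help the inequality. Abe's actual proof is entirely different: he fixes $\A$ and analyzes $\Delta$ as a function on the lattice $\bZ_{>0}^n$ of multiplicities (\cite{abe-exp}, \cite{an-lat}), extracting the bound from the structure of that function. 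Your argument is shorter and more geometric; what it does not give is the structural information about the multiplicity lattice that Abe's method produces and uses elsewhere (e.g.\ to locate the extremal $(\A,\m)$ attaining $\Delta=n-2$).

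For (i) there is a genuine gap, and you have correctly located it yourself. The reduction to exhibiting a single configuration via Proposition \ref{prop:upper} and irreducibility of $\calM_n$ is sound and matches the survey's description of the Wakefield--Yuzvinsky proof as ``a careful extension of upper semicontinuity,'' and your dimension count for $\Psi_\A$ is right. But the injectivity you need is not merely unproved: it is false at the level of generality of the statement as reproduced here. For $\m\equiv 1$ with $n\ge 4$ the multiplicity is balanced, yet $\theta_E$ lies in $D(\A,\m)_1$ for every $\A$, so $\exp(\A,\m)=(1,n-1)$ and $\Delta=n-2\ge 2$; more generally Proposition \ref{prop:typical}(ii) forces $\Delta=2n-2-|\m|$ whenever $n\ge|\m|/2+1$, which can exceed $1$ for balanced $\m$. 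So the statement of (i) as given omits a hypothesis of the original theorem (essentially that $|\m|$ be large enough relative to $n$ to rule out such persistent low-degree derivations); under that hypothesis your interpolation strategy is the right shape, but the non-degeneracy of $\Psi_\A$ for one configuration is precisely the technical core of \cite{wake-yuz} and remains to be carried out.
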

The proof of (i) is a careful extension of that of 
upper semicontinuity (Proposition \ref{prop:upper}). 
See cited papers for proof. 
The proof of (ii) is of a very different nature. Abe 
(\cite{abe-exp} and Abe-Numata \cite{an-lat}) first fix 
$\A$ and then consider $\Delta$ as a function from 
the set of multiplicities $\bZ_{>0}^n$ to $\bZ_{\geq 0}$, 
$$
\Delta:\bZ_{>0}^n\longrightarrow\bZ_{\geq 0} ,\ \m\longmapsto 
\Delta(\A, \m). 
$$
They studied the structure of this function in great detail. 
The proof of (ii) is based on this. 

(i) tells the generic behavior of the function $\Delta$. (ii) tells 
the upper bound of $\Delta$ for balanced multiplicities. 
As far as the author knows, the examples $(\A, \m)$ attaining the upper bound 
of $\Delta$ are related to 
interesting free arrangements of rank $3$. 
Abe found a class of free arrangements which are combinatorially 
characterizable \cite{abe-exp}. 
See Example \ref{ex:g2cat}. 

\begin{pbm}
Give a unified proof for Theorem \ref{thm:2exp} (i) and (ii). 
\end{pbm}

\subsection{Multiarrangements and free arrangements}
\label{subsec:multi}

Multiarrangements appear as restrictions of simple arrangements. 
Namely, let $\A$ be an arrangement in $V$ of $\dim V=\ell$. 
For $H\in\A$ let us denote by $\A^H$ the induced arrangement on $H$. 

\begin{defn}
Define the function $\m^H:\A^H\longrightarrow\bZ_{>0}$ by 
$$
X\in\A^H\longmapsto \sharp\{H'\in\A\mid H'\supset X\}-1. 
$$
We call $(\A^H, \m^H)$ the Ziegler's multirestriction. 
\end{defn}

\begin{example}
Let $V=\bC^3$ with coordinates $x, y, z$. Put 
$
H_1=\{z=0\}, 
H_2=\{x=0\}, 
H_3=\{y=0\}, 
H_4=\{x-z=0\}, 
H_5=\{x+z=0\}, 
H_6=\{y-z=0\}, 
H_7=\{y+z=0\}, 
H_8=\{x-y=0\}, 
H_9=\{x+y=0\}$. Then $\A=\{H_1, \dots, H_9\}$ is free 
with exponents $(1, 3, 5)$. 
Ziegler's multirestriction to 
$(\A^{H_1}, \m^{H_1})$ is $x^3y^3(x-y)(x+y)$. 
(See Figure \ref{fig:ex}) 

\begin{figure}[htbp]
\begin{picture}(100,150)(130,0)
\thicklines

\put(0,150){\line(1,0){140}}
\qbezier(140,150)(150,150)(150,140)
\put(150,0){\line(0,1){140}}
\put(153,5){$H_1$}

\multiput(50,0)(20,0){3}{\line(0,1){130}}
\put(70,150){\line(0,-1){20}}
\put(70,150){\line(1,-1){20}}
\put(70,150){\line(-1,-1){20}}

\multiput(0,50)(0,20){3}{\line(1,0){130}}
\put(150,70){\line(-1,0){20}}
\put(150,70){\line(-1,1){20}}
\put(150,70){\line(-1,-1){20}}

\put(10,10){\line(1,1){138}}
\put(10,130){\line(1,-1){130}}

\put(200,70){\line(1,0){150}}
\put(270,00){\line(0,1){150}}
\put(200,0){\line(1,1){130}}
\put(200,140){\line(1,-1){130}}

\put(205,135){$1$}
\put(275,135){$3$}
\put(330,120){$1$}
\put(330,73){$3$}

\end{picture}
\caption{$\A=\{H_1, \dots, H_9\}$ and $(\A^{H_1}, \m^{H_1})$}
\label{fig:ex}
\end{figure}

\end{example}

\begin{defn}
Fix a hyperplane $H_1\in\A$. Then we define a submodule $D_1(\A)$ of 
$D(\A)$ by 
$$
D_1(\A)=
\{\delta\in D(\A)\mid \delta\alpha_{H_1}=0\}. 
$$
\end{defn}

\begin{lem}
Under the above notations, 
$D(\A)=S\cdot\theta_E\oplus D_1(\A)$. 
\end{lem}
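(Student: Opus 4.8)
The plan is to establish the direct sum decomposition $D(\A) = S\cdot\theta_E \oplus D_1(\A)$ by exhibiting, for each $\delta \in D(\A)$, a canonical splitting off of a multiple of the Euler field. First I would normalize coordinates so that $\alpha_{H_1} = x_1$; this is harmless since $D(\A)$ is intrinsic. For any $\delta \in D(\A)$, write $\delta = \sum_{i=1}^\ell f_i \partial_{x_i}$, so that $\delta\alpha_{H_1} = \delta x_1 = f_1 \in S$. The candidate decomposition is
\begin{equation*}
\delta = \frac{f_1}{x_1}\,\theta_E + \left(\delta - \frac{f_1}{x_1}\,\theta_E\right).
\end{equation*}
For this to make sense I must first check that $x_1 \mid f_1$ in $S$. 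This holds because $\delta \in D(\A)$ forces $\delta\alpha_{H_1} = f_1 \in (\alpha_{H_1}) = (x_1)$ by the defining property of $D(\A)$ (with multiplicity $\m \equiv 1$). So $g := f_1/x_1 \in S$, and $g\,\theta_E \in S\cdot\theta_E \subseteq D(\A)$ since the Euler field lies in $D(\A)$ for any central arrangement and $D(\A)$ is an $S$-module.

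Next I would verify the two claims needed for a direct sum: that $\delta - g\,\theta_E$ actually lies in $D_1(\A)$, and that the sum is direct. For the first, set $\eta = \delta - g\,\theta_E$. Since both $\delta$ and $g\theta_E$ are in $D(\A)$, so is $\eta$; and $\eta\alpha_{H_1} = \eta x_1 = f_1 - g\cdot x_1 = f_1 - f_1 = 0$, so $\eta \in D_1(\A)$ by definition. This shows $D(\A) = S\cdot\theta_E + D_1(\A)$. For directness, suppose $h\,\theta_E \in D_1(\A)$ for some $h \in S$; then $(h\theta_E)\alpha_{H_1} = h\cdot x_1 = 0$ in $S$, which forces $h = 0$ since $S$ is a domain. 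Hence $S\cdot\theta_E \cap D_1(\A) = 0$, and the sum is direct. (One should also note $\theta_E$ itself is not a zerodivisor on $S$, i.e. $h \mapsto h\theta_E$ is injective, which is the same computation.)

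I do not anticipate a serious obstacle here; the only point requiring care is the very first step — the divisibility $x_1 \mid f_1$ — which is precisely where the hypothesis $\delta \in D(\A)$ (as opposed to an arbitrary derivation) is used, and it is immediate from the definition of $D(\A, \m)$ with $\m(H_1) = 1$. The rest is formal manipulation in the polynomial ring. If one wanted a coordinate-free phrasing, the map $\delta \mapsto (\delta\alpha_{H_1})/\alpha_{H_1}$ is a well-defined $S$-linear projection $D(\A) \to S$ whose composition with $g \mapsto g\theta_E$ is the identity on $S\cdot\theta_E$ (since $\theta_E\alpha_{H_1} = \alpha_{H_1}$), exhibiting $S\cdot\theta_E$ as a direct summand with complement exactly the kernel $D_1(\A)$; this is the conceptual content and could replace the explicit computation if preferred.
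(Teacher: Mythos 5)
Your proposal is correct and is essentially the paper's own argument: the paper also splits $\delta$ as $\frac{\delta\alpha_{H_1}}{\alpha_{H_1}}\cdot\theta_E+\bigl(\delta-\frac{\delta\alpha_{H_1}}{\alpha_{H_1}}\cdot\theta_E\bigr)$, and you have merely written this in coordinates with $\alpha_{H_1}=x_1$ and spelled out the divisibility and directness checks that the paper leaves implicit.
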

\begin{proof}
Let $\delta\in D(\A)$. Since 
$\delta-\frac{\delta\alpha_{H_1}}{\alpha_{H_1}}\cdot\theta_E$ is 
in $D_1(\A)$, $\delta=
\frac{\delta\alpha_{H_1}}{\alpha_{H_1}}\cdot\theta_E+
\left(\delta-\frac{\delta\alpha_{H_1}}{\alpha_{H_1}}\cdot\theta_E\right)$ 
gives the desired decomposition. 
\end{proof}

\begin{theorem} 
(Ziegler \cite{zie-multi}) 
Notations as above. 
\begin{itemize}
\item[(i)] If $\delta\in D_1(\A)$, then $\delta|_{H_1}\in 
D(\A^{H_1}, \m^{H_1})$. 
\item[(ii)] 
If $\A$ is free with exponents $(1, d_2, \dots, d_\ell)$, 
then $(\A^{H_1}, \m^{H_1})$ is free with exponents 
$(d_2, \dots, d_\ell)$. 
\end{itemize}
\end{theorem}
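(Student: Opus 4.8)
The plan is to prove part (i) first by a direct computation, and then deduce part (ii) by combining (i) with the decomposition $D(\A) = S\cdot\theta_E \oplus D_1(\A)$ of the preceding lemma together with Saito's criterion (Theorem~\ref{thm:saitocri}). For part (i), let $\delta \in D_1(\A)$, so $\delta\alpha_{H_1} = 0$ and $\delta\alpha_{H'} \in (\alpha_{H'})$ for every $H' \in \A$. I want to show that the restriction $\bar\delta := \delta|_{H_1}$ is a well-defined derivation on $H_1$ lying in $D(\A^{H_1}, \m^{H_1})$. The condition $\delta\alpha_{H_1} = 0$ means $\delta$ is tangent to $H_1$, hence descends to a polynomial vector field $\bar\delta$ on $H_1$. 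Now fix $X \in \A^{H_1}$; say $X = H_1 \cap H'$ for some $H' \in \A$, and let $H'_1, \dots, H'_k$ be all the hyperplanes of $\A$ (other than $H_1$) containing $X$, so that $\m^{H_1}(X) = k$. The defining forms $\alpha_{H'_1}, \dots, \alpha_{H'_k}$ restrict to $H_1$ to scalar multiples of a single linear form $\beta$ defining $X$ in $H_1$. For each $j$ we have $\delta\alpha_{H'_j} \in (\alpha_{H'_j})$, i.e. $\delta\alpha_{H'_j} = g_j\,\alpha_{H'_j}$ for some polynomial $g_j$. Restricting to $H_1$ and using that $\alpha_{H'_j}|_{H_1}$ is a scalar times $\beta$, one gets $\bar\delta\beta \equiv 0 \pmod{\beta}$ from each of the $k$ relations; the point is to show these $k$ congruences are independent enough to force $\bar\delta\beta \in (\beta^k)$. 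This is the standard argument: working in the local ring at a generic point of $X$, the forms $\alpha_{H'_j}/\beta$ are units whose values at $X$ are pairwise ``generic'', and $\delta$ being tangent to each $H'_j$ forces the restricted derivation to vanish to order $k$ along $\beta = 0$. I expect this local multiplicity bookkeeping to be the main obstacle — it is where the combinatorial definition of $\m^{H_1}$ really enters, and one must be careful that distinct $H'_j$ genuinely contribute distinct vanishing conditions.

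For part (ii), suppose $\A$ is free with exponents $(1, d_2, \dots, d_\ell)$ and basis $\delta_1 = \theta_E, \delta_2, \dots, \delta_\ell$ with $\pdeg\delta_i = d_i$. By the lemma $D(\A) = S\cdot\theta_E \oplus D_1(\A)$, and after subtracting suitable multiples of $\theta_E$ I may assume $\delta_2, \dots, \delta_\ell \in D_1(\A)$ form an $S$-basis of $D_1(\A)$, with degrees still $d_2, \dots, d_\ell$ (subtracting a multiple of $\theta_E$ does not change the polynomial degree as long as one argues that the leading parts remain independent — or simply note that $D_1(\A)$ is a graded direct summand, hence free with those exponents). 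By part (i), $\bar\delta_2 := \delta_2|_{H_1}, \dots, \bar\delta_\ell := \delta_\ell|_{H_1}$ all lie in $D(\A^{H_1}, \m^{H_1})$, and $\pdeg\bar\delta_i = d_i$ (the restriction does not drop the degree since $\delta_i$ is tangent to $H_1$ and nonzero modulo $\alpha_{H_1}$). To invoke Saito's criterion on the $(\ell-1)$-dimensional space $H_1$, I need two things: that $\bar\delta_2, \dots, \bar\delta_\ell$ are linearly independent over the coordinate ring of $H_1$, and that $\sum_{i=2}^\ell d_i = |\m^{H_1}|$.

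The degree sum is immediate from combinatorics: by Terao's factorization theorem (Theorem~\ref{thm:tfact}), $\sum_{i=1}^\ell e_i = \sum_i d_i$ counts $|\A|$ for $\A$ simple, so $\sum_{i=2}^\ell d_i = |\A| - 1$; on the other hand $|\m^{H_1}| = \sum_{X \in \A^{H_1}} \m^{H_1}(X) = \sum_{X}(\sharp\{H' \supset X\} - 1)$, and each hyperplane $H' \neq H_1$ meets $H_1$ in exactly one element $X$ of $\A^{H_1}$, so this sum telescopes to $|\A| - 1$ as well. For linear independence, consider $\delta_2 \wedge \dots \wedge \delta_\ell$ in $D_1(\A)$: since $\theta_E \wedge \delta_2 \wedge \dots \wedge \delta_\ell = c\,Q(\A)\,\partial_{x_1} \wedge \dots \wedge \partial_{x_\ell}$ with $c \neq 0$ by Saito's criterion for $\A$, and $Q(\A)$ is divisible by $\alpha_{H_1}$ exactly once, restricting to $H_1$ and contracting against the Euler direction shows $\bar\delta_2 \wedge \dots \wedge \bar\delta_\ell$ equals (a nonzero scalar times) $Q(\A^{H_1}, \m^{H_1})$ times the volume form on $H_1$ — in particular it is nonzero, giving independence. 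With both hypotheses of Saito's criterion verified, $D(\A^{H_1}, \m^{H_1})$ is free with basis $\bar\delta_2, \dots, \bar\delta_\ell$, hence with exponents $(d_2, \dots, d_\ell)$, which is exactly (ii).
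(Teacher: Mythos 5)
Part (ii) of your argument is, in substance, the paper's proof: decompose $D(\A)=S\cdot\theta_E\oplus D_1(\A)$, restrict a homogeneous basis of $D_1(\A)$, note that the Saito determinant of $(\theta_E,\delta_2,\dots,\delta_\ell)$ equals $x_1$ times the minor $\det(f_{ij})$ while $Q(\A)$ is divisible by $x_1=\alpha_{H_1}$ exactly once, and finish with Saito's criterion together with the telescoping count $|\m^{H_1}|=|\A|-1$. (A cosmetic point: the identity $\sum_i d_i=|\A|$ is part of Saito's criterion itself; you do not need Terao's factorization theorem for it.)

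The genuine gap is in part (i), and you have located it without closing it. Once you restrict to $H_1$, the $k$ conditions $\delta\alpha_{H'_j}\in(\alpha_{H'_j})$ all collapse to one and the same congruence $\bar\delta\beta\equiv 0\pmod{\beta}$, because every $\alpha_{H'_j}|_{H_1}$ is proportional to $\beta$; $k$ copies of a single first-order condition cannot be assembled into a $k$-th order condition, and no appeal to genericity of the ``values of $\alpha_{H'_j}/\beta$ at $X$'' can help (these ratios are not units at a generic point of $X$, since numerator and denominator both vanish there). The divisibility has to be extracted in the ambient ring \emph{before} restricting. Choose coordinates with $x_1=\alpha_{H_1}$ and write $\alpha_{H'_j}=c_jx_1+\alpha'$ with $\alpha'=\alpha'(x_2,\dots,x_\ell)$ a fixed linear form and $c_1,\dots,c_k$ pairwise distinct scalars (distinct because the $H'_j$ are distinct hyperplanes with the same trace on $H_1$). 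Since $\delta x_1=0$, the condition $\delta\alpha_{H'_j}\in(\alpha_{H'_j})$ says that the \emph{single} polynomial $\delta\alpha'=\delta\alpha_{H'_j}$ is divisible by $c_jx_1+\alpha'$ for every $j$; these linear forms are pairwise coprime, so $\delta\alpha'$ is divisible by the product $\prod_{j=1}^k(c_jx_1+\alpha')$. Only now restrict to $x_1=0$: each factor becomes $\alpha'$ and you obtain $\bar\delta\alpha'\in((\alpha')^k)$, as required. This is precisely where the hypothesis $\delta\alpha_{H_1}=0$ (rather than merely $\delta\alpha_{H_1}\in(\alpha_{H_1})$) enters in an essential way: it is what makes $\delta\alpha_{H'_j}$ independent of $j$, turning your $k$ separate congruences into $k$ coprime divisibility conditions on one polynomial.
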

\begin{proof}
We can choose coordinates $x_1, \dots, x_\ell$ in such a way that 
$x_1=\alpha_{H_1}$. Let $X\in\A^{H_1}$ and put 
$$
\A_X=\{H\in\A\mid H\supset X\}=
\{H_1, H_{i_1}, H_{i_2}, \dots, H_{i_m}\}. 
$$
Since $H\cap H_{i_1}=\dots=H\cap H_{i_m}=X$, the restriction 
$\alpha_{i_p}|_{x_1=0}$ determines the same hyperplane. 
Thus we may assume that $\alpha_{i_p}$ have the form 
\begin{equation*}
\begin{split}
\alpha_{i_1}(x_1, \dots x_\ell)&=
c_1x_1+\alpha'(x_2, \dots, x_\ell)\\
\alpha_{i_2}(x_1, \dots x_\ell)&=
c_2x_1+\alpha'(x_2, \dots, x_\ell)\\
\dots&\dots\\
\alpha_{i_m}(x_1, \dots x_\ell)&=
c_mx_1+\alpha'(x_2, \dots, x_\ell),  
\end{split}
\end{equation*}
where $c_1, \dots, c_m$ are mutually distinct. 
Let $\delta\in D_1(\A)$. By definition, 
$$
\delta(c_kx_1+\alpha'(x_2, \dots, x_\ell))\in
(c_kx_1+\alpha'(x_2, \dots, x_\ell)). 
$$
Then since $\delta x_1=0$, 
$\delta\alpha'(x_2, \dots, x_\ell)$ is divisible by 
$c_kx_1+\alpha'(x_2, \dots, x_\ell)$ for all $k=1, \dots, m$. 
Hence it is divisible by 
$$
\prod_{k=1}^m
(c_kx_1+\alpha'(x_2, \dots, x_\ell)). 
$$
Now we restrict to $x_1=0$. Then $\delta|_{x_1=0}\alpha'$ 
is divisible by $(\alpha')^m$. Thus (i) is proved. 

(ii) Let $\delta_1=\theta_E, \delta_2, \dots, \delta_\ell$ be a 
basis of $D(\A)$ such that $\delta_2, \dots, \delta_\ell
\in D_1(\A)$. Let us set 
$\delta_i=\sum_{j=2}^\ell f_{ij}\partial_{x_i}$. 
We will prove that $\delta_2|_{x_1=0}, \dots, \delta_\ell|_{x_1=0}$ 
are linearly independent over $S/x_1S=\bC[x_2, \dots, x_\ell]$. 
Indeed by Saito's criterion, the determinant 
$$
\det
\begin{pmatrix}
x_1&x_2&\dots&x_\ell\\
0&f_{22}&\dots&f_{2n}\\
\vdots&\vdots&\ddots&\vdots\\
0&f_{n2}&\dots&f_{nn}
\end{pmatrix}
=
x_1\cdot
\det
\begin{pmatrix}
f_{22}&\dots&f_{2n}\\
\vdots&\ddots&\vdots\\
f_{n2}&\dots&f_{nn}
\end{pmatrix}
$$
is divisible by $x_1$ just once. 
Hence $\det(f_{ij})$ is not divisible by $x_1$, which implies that 
$\delta_2|_{x_1=0}, \dots, \delta_\ell|_{x_1=0}$ 
are linearly independent over $S/x_1S=\bC[x_2, \dots, x_\ell]$. 
Furthermore, we have 
$$
\sum_{i=2}^\ell
\pdeg\delta_i|_{x_1=0}
=|\A|-1
=\sum_{X\in\A^{H_1}}\m^{H_1}(X). 
$$
By Saito's criterion, they form a free basis of 
$D(\A^{H_1}, \m^{H_1})$. 
\end{proof}
It seems natural to pay attention to the exact sequence 
\begin{equation}
\label{eq:rest}
0\longrightarrow
D_1(\A)\stackrel{x_1\cdot}{\longrightarrow}
D_1(\A)\stackrel{\rho}{\longrightarrow}
D(\A^{H_1}, \m^{H_1}). 
\end{equation}
From the above proof, we know that if $\A$ is free, 
then the restriction map $\rho$ is surjective. 

\begin{corollary}
\label{cor:zie}
If the restriction map $\rho$ is surjective, and 
$D(\A^{H_1}, \m^{H_1})$ is free with exponents 
$(d_2, \dots, d_\ell)$, then $\A$ is free with exponents 
$(1, d_2, \dots, d_\ell)$. 
\end{corollary}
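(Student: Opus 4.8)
The plan is to derive the statement from Corollary's hypotheses by combining the exact sequence \eqref{eq:rest}, the decomposition lemma $D(\A)=S\cdot\theta_E\oplus D_1(\A)$, and Saito's criterion (Theorem \ref{thm:saitocri}). First I would pick a free basis $\eta_2,\dots,\eta_\ell$ of $D(\A^{H_1},\m^{H_1})$ with $\pdeg\eta_i=d_i$. By the surjectivity of $\rho$ in \eqref{eq:rest}, for each $i$ I can choose $\delta_i\in D_1(\A)$ with $\rho(\delta_i)=\delta_i|_{H_1}=\eta_i$; since $\rho$ is a graded map I may take $\delta_i$ homogeneous of $\pdeg\delta_i=d_i$ (an element of higher degree restricting to $\eta_i$ would decompose into graded pieces, and only the degree-$d_i$ piece can restrict nontrivially). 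Together with $\delta_1:=\theta_E$ of polynomial degree $1$, this gives $\ell$ homogeneous elements of $D(\A)$ with $\sum_{i=1}^\ell\pdeg\delta_i=1+\sum_{i=2}^\ell d_i$.

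The next step is to verify the degree condition and linear independence required by Saito's criterion. By Terao's relation recalled after Theorem \ref{thm:saitocri}, since $D(\A^{H_1},\m^{H_1})$ is free with exponents $(d_2,\dots,d_\ell)$ we have $\sum_{i=2}^\ell d_i=|\m^{H_1}|=\sum_{X\in\A^{H_1}}\m^{H_1}(X)=|\A|-1$. Hence $\sum_{i=1}^\ell\pdeg\delta_i=1+(|\A|-1)=|\A|=|\m|$ for the simple arrangement $\A$, which is exactly condition (iii) of Theorem \ref{thm:saitocri} once independence is checked. For linear independence over $S$, I would mimic the determinant computation in the proof of Ziegler's theorem: writing $\delta_1=\theta_E=x_1\partial_{x_1}+\sum_{j\geq 2}x_j\partial_{x_j}$ and $\delta_i=\sum_{j\geq 2}f_{ij}\partial_{x_j}$ for $i\geq 2$ (the $\partial_{x_1}$-coefficient vanishes because $\delta_i\in D_1(\A)$ and $x_1=\alpha_{H_1}$), the coefficient matrix is block-triangular with determinant $x_1\cdot\det(f_{ij})_{2\leq i,j\leq\ell}$. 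Since $\delta_i|_{x_1=0}=\eta_i$ and $\eta_2,\dots,\eta_\ell$ form a free basis of $D(\A^{H_1},\m^{H_1})$, Saito's criterion applied to the multirestriction gives $\det(f_{ij})|_{x_1=0}=c\cdot Q(\A^{H_1},\m^{H_1})\neq 0$, so in particular $\det(f_{ij})\not\equiv 0$ and hence $\delta_1,\dots,\delta_\ell$ are $S$-linearly independent.

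Finally, applying Saito's criterion (in the form (iii) $\Rightarrow$ (i)) to $\delta_1,\dots,\delta_\ell\in D(\A)=D(\A,1)$ yields that they form a free basis of $D(\A)$, so $\A$ is free with exponents $(1,d_2,\dots,d_\ell)$, as claimed.

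The step I expect to be the main obstacle is the bookkeeping that guarantees one can lift each $\eta_i$ to a \emph{homogeneous} $\delta_i$ of the \emph{same} polynomial degree $d_i$: one must be careful that $\rho$ is degree-preserving and that the grading on $D_1(\A)$ is compatible with that on $D(\A^{H_1},\m^{H_1})$, so that graded surjectivity — not merely surjectivity of underlying modules — is what is being used. Everything else is a direct transcription of the determinant argument already carried out for Ziegler's theorem, now run in reverse.
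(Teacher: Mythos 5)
Your proposal is correct and follows essentially the same route as the paper: lift a homogeneous basis of $D(\A^{H_1},\m^{H_1})$ through the surjective graded map $\rho$, check that the degrees sum to $|\A|$ and that the block-triangular determinant $x_1\cdot\det(f_{ij})$ is nonzero, and conclude by Saito's criterion. The paper compresses the independence and degree checks into "as in the previous proof," and your explicit treatment of the graded lifting (that each $\eta_i$ admits a homogeneous preimage of the same polynomial degree) is a worthwhile detail the paper leaves implicit.
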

\begin{proof}
By the assumption, there exists $\delta_2, \dots, \delta_\ell
\in D_1(\A)$ such that $\rho(\delta_2)=\delta_2|_{x_1=0}, \dots, 
\rho(\delta_\ell)=\delta_\ell|_{x_1=0}$ are basis of 
$D(\A^{H_1}, \m^{H_1})$. As in the previous proof, 
$\delta_2, \dots, \delta_\ell$ and $\theta_E$ are 
linearly independent and the sum of $\pdeg$ is 
$|\A|$. Hence by Saito's criterion, $(\theta_E, \delta_2, \dots, 
\delta_\ell)$ is a basis of $D(\A)$. 
\end{proof}

Generally, $\rho$ is not surjective. However, local freeness 
implies local surjectivity. 

\begin{defn}
Let $\A$ be an arrangement and $H_1\in\A$. Then 
$\A$ is said to be locally free along $H_1$ if 
$\A_X=\{H\in\A\mid X\subset H\}$ is free for all $X\in L(\A)$ with 
$X\subset H_1$ and $X\neq 0$. 
\end{defn}
Local freeness along $H_1$ implies 
$$
0\longrightarrow
D_1(\A_X)\stackrel{x_1\cdot}{\longrightarrow}
D_1(\A_X)\stackrel{\rho}{\longrightarrow}
D(\A_X^{H_1}, \m_X^{H_1})
\longrightarrow 0
$$
for all $X\in L(\A), X\neq 0$ with $X\subset H_1$. Thus we have 
an exact sequence of sheaves over $\bP^{\ell-1}$. 
\begin{equation}
\label{eq:rest}
0\longrightarrow
\widetilde{D_1(\A)}(-1)\stackrel{x_1\cdot}{\longrightarrow}
\widetilde{D_1(\A)}\stackrel{\rho}{\longrightarrow}
\widetilde{D(\A^{H_1}, \m^{H_1})}
\longrightarrow 0.  
\end{equation}
Thus we obtain a relation between Ziegler's multirestriction 
and restriction of the sheaf $D_1(\A)$. 
\begin{prop}
\label{prop:stalk}
If $\A$ is locally free along $H_1$, then 
$$
\widetilde{D_1(\A)}|_{H_1}=
\widetilde{D(\A^{H_1}, \m^{H_1})}. 
$$
\end{prop}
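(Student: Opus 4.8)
The plan is to deduce the statement directly from the sheaf sequence (\ref{eq:rest}) on $\bP^{\ell-1}$ together with the elementary fact that, since $x_1\in\Gamma(\bP^{\ell-1},\calO(1))$ is a defining section of $H_1$, the cokernel of multiplication by $x_1$ on a coherent $\calO$-module is its restriction to $H_1$: for any $\calF$ one has $\Coker\bigl(x_1\cdot:\calF(-1)\to\calF\bigr)=\calF/x_1\calF=\calF\otimes_\calO\calO_{H_1}=\calF|_{H_1}$. Applying this with $\calF=\widetilde{D_1(\A)}$ and reading off the cokernel from (\ref{eq:rest}) gives $\widetilde{D(\A^{H_1},\m^{H_1})}=\Coker(x_1\cdot)=\widetilde{D_1(\A)}|_{H_1}$, which is the assertion. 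Here $\widetilde{D(\A^{H_1},\m^{H_1})}$ is viewed, as in (\ref{eq:rest}), through the closed immersion $H_1\hookrightarrow\bP^{\ell-1}$; after stripping this push-forward, both sides are sheaves on $H_1\cong\bP^{\ell-2}$ and the isomorphism is the one induced by $\rho$.

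So the real content is the exactness of (\ref{eq:rest}) as a sequence of sheaves, in particular the surjectivity of $\rho$ on stalks, and this is exactly where local freeness along $H_1$ is used. Exactness of a complex of sheaves is checked stalkwise. At a point $p\in\bP^{\ell-1}$ corresponding to a line $\ell_p\subset V$, put $X_p=\bigcap\{H\in\A\mid \ell_p\subseteq H\}$; because the conditions $\delta\alpha_H\in(\alpha_H)$ for hyperplanes $H$ not containing $\ell_p$ are vacuous near $p$, each of the three sheaves in (\ref{eq:rest}) has the same germ at $p$ as the sheaf built from the localized arrangement $\A_{X_p}$. If $p\notin H_1$, then $x_1$ is a unit in $\calO_{\bP^{\ell-1},p}$ and the germ of $\widetilde{D(\A^{H_1},\m^{H_1})}$ at $p$ is $0$, so the stalk sequence is trivially exact. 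If $p\in H_1$, then $\ell_p\subseteq H_1$ forces $X_p\subseteq H_1$, while $\ell_p\subseteq X_p$ gives $X_p\neq 0$ and $H_1\in\A_{X_p}$; by hypothesis $\A_{X_p}$ is free, so the corresponding restriction map $\rho$ is surjective (see the discussion preceding Corollary \ref{cor:zie}), i.e. there is a short exact sequence $0\to D_1(\A_{X_p})(-1)\xrightarrow{x_1}D_1(\A_{X_p})\xrightarrow{\rho}D(\A_{X_p}^{H_1},\m_{X_p}^{H_1})\to 0$. Sheafifying and passing to germs at $p$ yields exactness of (\ref{eq:rest}) at $p$.

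The main obstacle is precisely this stalkwise surjectivity of $\rho$, which reduces to the implication ``$\A_X$ free $\Rightarrow$ $\rho$ surjective'' for the localizations $X\subseteq H_1$ — the content guaranteed, at every stalk, by local freeness along $H_1$; granting it, the proposition is formal. One routine compatibility should also be recorded: for $X\subseteq H_1$ the hyperplanes of $\A$ through $X$ restrict on $H_1$ to the hyperplanes of $\A^{H_1}$ through $X$ with matching Ziegler multiplicities, so that the sheaf $\widetilde{D(\A^{H_1},\m^{H_1})}$ is indeed glued from the local pieces $\widetilde{D(\A_{X_p}^{H_1},\m_{X_p}^{H_1})}$ used above.
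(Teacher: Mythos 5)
Your argument is correct and follows the same route as the paper: local freeness along $H_1$ yields the short exact sequences for the localizations $\A_X$ with $X\subseteq H_1$ (via the surjectivity of $\rho$ for free arrangements established in the proof of Ziegler's theorem), these glue to the exact sheaf sequence (\ref{eq:rest}), and the cokernel of multiplication by $x_1$ is identified with the restriction to $H_1$. You merely make explicit the stalkwise verification that the paper leaves implicit.
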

By the above proposition combined with 
Proposition \ref{prop:split} and 
Horrocks criterion (Theorem \ref{thm:horrocks} (2), see 
also subsequent Remark \ref{rem:horrocksref}), 
we have the following criterion for freeness. 
\begin{theorem}
\label{thm:char4}
(\cite{yos-char}) 
Assume that $\ell\geq 4$. Then $\A$ 
is free with exponents $(1, d_2, \dots, d_\ell)$ if and only if 
the following conditions are satisfied. 
\begin{itemize}
\item $\A$ is locally free along $H_1$, 
\item Ziegler's multirestriction 
$(\A^{H_1}, m^{H_1})$ is free with exponents 
$(d_2, \dots, d_\ell)$. 
\end{itemize}
\end{theorem}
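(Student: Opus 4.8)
The plan is to prove the two implications separately, with the ``only if'' direction being essentially bookkeeping and the ``if'' direction carrying the real content. For ``only if'': assume $\A$ is free with exponents $(1, d_2, \dots, d_\ell)$. Then $\A$ is locally free (by the proposition asserting that free arrangements are locally free), so in particular $\A_X$ is free for every $X \in L(\A)$ with $X \neq 0$ and $X \subset H_1$; that is, $\A$ is locally free along $H_1$. Freeness of $(\A^{H_1}, \m^{H_1})$ with exponents $(d_2, \dots, d_\ell)$ is then exactly Ziegler's theorem (ii). Nothing new is needed.

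For the ``if'' direction I would work with the submodule $D_1(\A) \subset D(\A)$ and its sheafification $\calF := \widetilde{D_1(\A)}$ on $\bP^{\ell-1}$. Since $D(\A) = S\theta_E \oplus D_1(\A)$ and $D(\A)$ is reflexive, the direct summand $D_1(\A)$ is reflexive, hence $\calF$ is a reflexive sheaf; and since $\ell \geq 4$, the base $\bP^{\ell-1}$ has dimension $\geq 3$. Local freeness along $H_1$ lets me apply Proposition~\ref{prop:stalk} to identify $\calF|_{H_1}$ with $\widetilde{D(\A^{H_1}, \m^{H_1})}$; by the assumed freeness of the multirestriction together with Proposition~\ref{prop:split}, this restriction splits as $\bigoplus_{i=2}^\ell \calO_{H_1}(-d_i)$. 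Now the form of Horrocks' restriction criterion valid for reflexive sheaves (Theorem~\ref{thm:horrocks}(2) combined with Remark~\ref{rem:horrocksref}) forces $\calF$ itself to split, say $\calF \simeq \bigoplus_{i=2}^\ell \calO(-e_i)$. Restricting back to $H_1$ and comparing with the known splitting of $\calF|_{H_1}$, and using uniqueness of the splitting degrees, gives $(e_2, \dots, e_\ell) = (d_2, \dots, d_\ell)$ up to order, so $\calF \simeq \bigoplus_{i=2}^\ell \calO(-d_i)$.

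To conclude, I would take global sections. By (\ref{eq:globalsection}) one has $D(\A) = \Gamma_*(\widetilde{D(\A)}) = \Gamma_*\bigl(\calO(-1) \oplus \calF\bigr)$, and splitting off the summand corresponding to $\theta_E$ (compatibly with the decomposition $D(\A) = S\theta_E \oplus D_1(\A)$) yields $D_1(\A) = \Gamma_*(\calF) = \bigoplus_{i=2}^\ell S[-d_i]$. Hence $D(\A) = S\theta_E \oplus \bigoplus_{i=2}^\ell S[-d_i]$ is free with a basis $\theta_E, \delta_2, \dots, \delta_\ell$ of polynomial degrees $1, d_2, \dots, d_\ell$; equivalently $\widetilde{D(\A)} \simeq \calO(-1) \oplus \bigoplus_{i=2}^\ell \calO(-d_i)$ and Proposition~\ref{prop:split} applies, giving freeness with exponents $(1, d_2, \dots, d_\ell)$.

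The main obstacle, and the only place where the hypothesis $\ell \geq 4$ is genuinely used, is the invocation of Horrocks' restriction criterion: the vector-bundle statement Theorem~\ref{thm:horrocks}(2) does not suffice here, because $\widetilde{D_1(\A)}$ is in general merely reflexive and not locally free, so one must rely on its extension to reflexive sheaves — and that extension still requires $\dim \bP^{\ell-1} \geq 3$. A secondary point needing care is verifying that Proposition~\ref{prop:stalk} identifies $\calF|_{H_1}$ with $\widetilde{D(\A^{H_1}, \m^{H_1})}$ on the nose, with no spurious twist; this is exactly where local freeness along $H_1$ is used, namely to guarantee right-exactness of the sheaf sequence (\ref{eq:rest}).
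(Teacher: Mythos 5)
Your proposal is correct and follows exactly the route the paper indicates: the paper derives the theorem in one line by combining Proposition~\ref{prop:stalk}, Proposition~\ref{prop:split}, and the reflexive-sheaf version of Horrocks' restriction criterion (Theorem~\ref{thm:horrocks}(2) with Remark~\ref{rem:horrocksref}), which is precisely the argument you flesh out. Your identification of where $\ell\geq 4$ and local freeness along $H_1$ enter also matches the paper's intent.
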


The above criterion is not valid for $\ell=3$. Indeed 
for $\ell=3$, both conditions are automatically satisfied, however, 
there exist non free $3$-arrangements. For characterizing 
freeness of $3$-arrangements, we need characteristic polynomials. 

\begin{theorem}
\label{thm:char3}
(\cite{yos-3arr}) 
Let $\A$ be a $3$-arrangement. Set $\chi(\A, t)=(t-1)(t^2-b_1t+b_2)$ and 
$\exp(\A^{H_1}, \m^{H_1})=(d_1, d_2)$. Then 
\begin{itemize}
\item[(i)] 
$b_2\geq d_1d_2$, furthermore 
$b_2-d_1d_2=\dim\Coker(\rho:D_1(\A)\rightarrow D(\A^{H_1}, \m^{H_1}))$. 
\item[(ii)] 
If $b_2=d_1d_2$, then $\A$ is free with exponents 
$(1, d_1, d_2)$.  
\end{itemize}
\end{theorem}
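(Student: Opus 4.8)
The plan is to pass to the projective plane $\bP^2=\bP^{\ell-1}$ and study the rank-two vector bundle attached to $D_1(\A)$. Since $D(\A)$ is reflexive and $D(\A)=S\cdot\theta_E\oplus D_1(\A)$, the summand $D_1(\A)$ is reflexive, so its sheafification $\calE:=\widetilde{D_1(\A)}$ is a reflexive sheaf on the smooth surface $\bP^2$, hence a rank-two vector bundle; moreover $D_1(\A)=\Gamma_*(\calE)$. Because $\ell=3$, the arrangement is automatically locally free along $H_1$: the flats $X\subset H_1$ with $X\neq 0$ are either $H_1$ itself or lines, and the corresponding $\A_X$ are $2$-arrangements, which are always free. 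Hence Proposition \ref{prop:stalk} applies and $\calE|_{H_1}=\widetilde{D(\A^{H_1},\m^{H_1})}$. The Ziegler multirestriction $(\A^{H_1},\m^{H_1})$ is a $2$-multiarrangement, hence free with exponents $(d_1,d_2)$ and $d_1+d_2=|\m^{H_1}|=|\A|-1$, so by Proposition \ref{prop:split} one gets $\calE|_{H_1}\cong\calO_{\bP^1}(-d_1)\oplus\calO_{\bP^1}(-d_2)$. In particular $c_1(\calE)=-(d_1+d_2)=-(|\A|-1)$, which already pins down the coefficient $b_1=|\A|-1$.

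The second ingredient is the identification $b_2=c_2(\calE)$. I would deduce this from the Solomon--Terao formula (Theorem \ref{thm:stf}): since $\widetilde{D(\A)}=\calO(-1)\oplus\calE$ and $\Omega^p(\A)$ is dual to $\bigwedge^p D(\A)$, all the Hilbert series entering $\Phi(\A;x,y)$ are governed by $\calE$, and a direct evaluation of (\ref{eq:stf}) expresses $\chi(\A,t)$ as $(t-1)\bigl(t^2+c_1(\calE)t+c_2(\calE)\bigr)$. This is the standard dictionary between the characteristic polynomial and the Chern polynomial of the logarithmic bundle in rank three.

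For the numerical part, I would use the restriction sequence (\ref{eq:rest}): in each degree $t$ it yields
\[
0\longrightarrow D_1(\A)_{t-1}\stackrel{x_1}{\longrightarrow}D_1(\A)_t\stackrel{\rho_t}{\longrightarrow}D(\A^{H_1},\m^{H_1})_t ,
\]
and, since local freeness makes the sheafified sequence (\ref{eq:rest}) exact, $\Coker\rho$ has finite length. Therefore $\dim D_1(\A)_t-\dim D_1(\A)_{t-1}=\dim D(\A^{H_1},\m^{H_1})_t-\dim\Coker(\rho_t)$. Summing over $t\leq T$ with $T\gg 0$: the left-hand side telescopes to $h^0(\calE(T))=\chi(\calE(T))$ (using $D_1(\A)=\Gamma_*(\calE)$ and Serre vanishing), while the right-hand side equals $\binom{T-d_1+2}{2}+\binom{T-d_2+2}{2}-\dim_\bC\Coker\rho$. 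Riemann--Roch on $\bP^2$ gives $\chi(\calE(T))=T^2+(c_1+3)T+\tfrac12\bigl(c_1^2-2c_2+3c_1+4\bigr)$, and comparing constant terms (the linear and quadratic terms agree automatically, using $c_1=-(d_1+d_2)$) yields $c_2-d_1d_2=\dim_\bC\Coker\rho\geq 0$; with $b_2=c_2(\calE)$ this is (i). Part (ii) is then immediate: if $b_2=d_1d_2$ then $\Coker\rho=0$, so $\rho\colon D_1(\A)\to D(\A^{H_1},\m^{H_1})$ is surjective, and Corollary \ref{cor:zie} gives that $\A$ is free with exponents $(1,d_1,d_2)$.

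The main obstacle is the identification $b_2=c_2(\calE)$; everything else is either formal (the reflexivity and local-freeness reductions) or a routine Euler-characteristic computation. One should also be careful to justify that $\Coker\rho$ is genuinely of finite length (so that the sums converge and localize to $T\gg 0$) and that both $D_1(\A)$ and the multirestriction module coincide with the global sections $\Gamma_*$ of their sheafifications — both points reduce to the depth-$\geq 2$ property of reflexive modules over $S$ and over $S/x_1S$.
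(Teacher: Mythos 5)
Your proposal is correct and follows essentially the same route as the paper, which derives Theorem \ref{thm:char3} as a corollary of the rank-two bundle statement Theorem \ref{thm:rk2}: identify $\widetilde{D_1(\A)}$ with a rank-two bundle on $\bP^2$ restricting on $H_1$ to the multirestriction, identify $b_2$ with $c_2$ via the Solomon--Terao/Musta\c t\v a--Schenck formula (Theorem \ref{thm:ms}), show $c_2-d_1d_2=\dim\Coker\rho$, and conclude freeness from surjectivity of $\rho$ via Ziegler's criterion. The only difference is cosmetic: you compute $c_2-d_1d_2$ by Riemann--Roch and a telescoping Hilbert-function sum, whereas the paper's proof of Theorem \ref{thm:rk2} does the equivalent limit-of-Hilbert-series manipulation directly.
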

The proof is based on an analysis of Solomon-Terao's formula. 
Theorem \ref{thm:char3} is also a corollary of a result in 
the next section (Theorem \ref{thm:rk2}). 

By combining Theorem \ref{thm:char4} and \ref{thm:char3}, 
we recently obtained the following criterion for $\ell\geq 4$. 

\begin{theorem}
\label{thm:b2}
(Abe-Yoshinaga \cite{ay-b2}) 
Assume that $\ell\geq 4$ and the multirestriction is free 
with $\exp(\A^{H_1}, \m^{H_1})=(d_2, \dots, d_\ell)$. 
Put 
$\chi(\A, t)=(t-1)(t^{\ell-1}-b_1t^{\ell-2}+b_2t^{\ell-3}-\dots)$. 
Then 
$$
b_2\geq\sum_{2\leq i<j\leq\ell}d_id_j, 
$$
and $\A$ is free if and only if the equality holds. 
\end{theorem}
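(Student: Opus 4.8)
The plan is to translate the statement into a second Chern class computation for the rank-$(\ell-1)$ reflexive sheaf $\calE:=\widetilde{D_1(\A)}$ on $\bP^{\ell-1}$, and to reduce the equality case, via Theorem \ref{thm:char4}, to the rank-$3$ criterion of Theorem \ref{thm:char3}. First I would record the Chern data. Since $D(\A)=S\theta_E\oplus D_1(\A)$ we have $\widetilde{D(\A)}=\calO(-1)\oplus\calE$; a standard computation (via Solomon--Terao, Theorem \ref{thm:stf}, or with the Koszul-type presentation of $D(\A)$) gives $c_1(\widetilde{D(\A)})=-|\A|\,h$ and $c_2(\widetilde{D(\A)})=b_2(\A)\,h^2$, where $h$ is the hyperplane class and $b_2(\A)$ the coefficient of $t^{\ell-2}$ in $\chi(\A,t)$. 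Dividing the total Chern class by $c(\calO(-1))=1-h$ and using $|\A|=b_1+1$, $b_2(\A)=b_2+b_1$ (obtained by expanding $\chi(\A,t)=(t-1)(t^{\ell-1}-b_1t^{\ell-2}+b_2t^{\ell-3}-\cdots)$), one finds $c_1(\calE)=-b_1 h$ and $c_2(\calE)=b_2\,h^2$.

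Next I would build the relevant exact sequence on $H_1$. Choosing coordinates with $x_1=\alpha_{H_1}$, the restriction map $\rho$ fits into an exact sequence $0\to D_1(\A)\stackrel{x_1}{\longrightarrow}D_1(\A)\stackrel{\rho}{\longrightarrow}D(\A^{H_1},\m^{H_1})$ with $\ker\rho=x_1 D_1(\A)$. Sheafifying, and using that the multirestriction is free so that $\widetilde{D(\A^{H_1},\m^{H_1})}\cong\bigoplus_{i=2}^\ell\calO_{H_1}(-d_i)$ by Proposition \ref{prop:split}, this yields a short exact sequence on $H_1\cong\bP^{\ell-2}$
\[
0\longrightarrow\calE|_{H_1}\longrightarrow\bigoplus_{i=2}^\ell\calO_{H_1}(-d_i)\longrightarrow\mathcal{Q}\longrightarrow 0,
\]
where $\mathcal{Q}=\Coker\rho$ is a torsion sheaf. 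Its support consists of the subspaces $\bP(X)$, $X\in L(\A)$, $X\subseteq H_1$, along which $\rho$ fails to be surjective; since every rank-$2$ localization is free and the restriction map of a free arrangement is surjective, $\operatorname{Supp}\mathcal{Q}$ has codimension $\ge 2$ in $H_1$, so $c_1(\mathcal{Q})=0$.

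Comparing total Chern classes in the displayed sequence and using $c_2(\calE|_{H_1})=c_2(\calE)|_{H_1}=b_2\,h^2$ gives $c_2(\mathcal{Q})=\bigl(\sum_{2\le i<j\le\ell}d_id_j-b_2\bigr)h^2$. For a torsion sheaf supported in codimension $\ge 2$, $-c_2(\mathcal{Q})$ is the class of its effective codimension-$2$ support cycle, so $b_2\ge\sum_{2\le i<j\le\ell}d_id_j$, with equality if and only if $\operatorname{Supp}\mathcal{Q}$ has codimension $\ge 3$ in $H_1$. Moreover, the codimension-$2$ components of $\operatorname{Supp}\mathcal{Q}$ are exactly the $\bP(X)$ with $\codim X=3$, $X\subseteq H_1$; localizing $\rho$ at the generic point of such a $\bP(X)$ recovers the restriction map of the rank-$3$ arrangement $\A_X$, whose cokernel has dimension $b_2(\A_X)-d_1^Xd_2^X$ by Theorem \ref{thm:char3}(i) (here $(d_1^X,d_2^X)=\exp(\A_X^{H_1},\m_X^{H_1})$, free as a localization of a free multiarrangement). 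Hence $b_2-\sum_{2\le i<j\le\ell}d_id_j=\sum_X\bigl(b_2(\A_X)-d_1^Xd_2^X\bigr)\ge 0$, and equality holds precisely when every such $\A_X$ is free.

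It then remains to deduce that $\A$ itself is free from the freeness of all its codimension-$3$ localizations along $H_1$, which I would do by induction on $\ell$. For $\ell=4$ this is immediate: the only flat $X\subseteq H_1$ with $\codim X=\ell$ is $\{0\}$ (excluded), and codimension-$\le 2$ flats are always free, so $\A$ is locally free along $H_1$ and Theorem \ref{thm:char4} applies. For $\ell\ge 5$, given a flat $X\subseteq H_1$ with $4\le\codim X\le\ell-1$, the arrangement $\A_X$ is essential of strictly smaller rank, has $H_1$ as a hyperplane, has free Ziegler multirestriction, and all of its codimension-$3$ localizations along $H_1$ are free; by the inductive hypothesis $\A_X$ is free. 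Thus every $\A_X$ with $X\subseteq H_1$, $X\ne 0$, is free, so $\A$ is locally free along $H_1$, hence free with exponents $(1,d_2,\dots,d_\ell)$ by Theorem \ref{thm:char4}; the converse is clear since a free arrangement is locally free. I expect the main obstacle to be the two places where local data is assembled into global data: identifying the Chern classes of the (non-locally-free) reflexive sheaves $\widetilde{D(\A)}$ and $\calE$ with combinatorial invariants, and verifying that near the generic point of each $\bP(X)$ the sheaf $\mathcal{Q}$ is pulled back from the rank-$3$ localization $\A_X$, so that Theorem \ref{thm:char3}(i) really computes the correct multiplicity.
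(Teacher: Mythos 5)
The paper does not actually prove Theorem \ref{thm:b2}; it only cites \cite{ay-b2} and says the result is obtained ``by combining Theorem \ref{thm:char4} and Theorem \ref{thm:char3}''. Your proposal is a genuine reconstruction along exactly those lines, and most of it is sound: exactness of $0\to D_1(\A)\stackrel{x_1}{\to}D_1(\A)\stackrel{\rho}{\to}D(\A^{H_1},\m^{H_1})$ gives $\Ker\rho=x_1D_1(\A)$, so your sequence $0\to\calE|_{H_1}\to\bigoplus_i\calO_{H_1}(-d_i)\to\mathcal{Q}\to0$ is correct and $\calE\otimes\calO_{H_1}$ is torsion-free; $\mathcal{Q}$ is supported on $\bigcup\{\bP(X):X\in L(\A),\ \codim X\geq 3,\ X\subset H_1\}$; the length of $\mathcal{Q}$ at the generic point of $\bP(X)$ with $\codim X=3$ is indeed $\dim\Coker\rho_{\A_X}=b_2(\A_X)-d_1^Xd_2^X$ (localization is exact and the hyperplanes not containing $X$ are units there), so Theorem \ref{thm:char3} applies termwise; and the induction reducing local freeness along $H_1$ to freeness of the rank-$3$ localizations, followed by Theorem \ref{thm:char4}, correctly closes the equality case. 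This is the geometric avatar of the published argument, which phrases the same reduction as a local--global identity for second coefficients.

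The one genuine gap is the opening step, $c_2(\widetilde{D(\A)})=b_2(\A)h^2$ and hence $c_2(\calE)=b_2h^2$. You justify it by Solomon--Terao/Musta\c t\v a--Schenck, but the paper's corollary of Theorem \ref{thm:ms} asserts this identity only for \emph{locally free} arrangements, and local freeness is exactly what is unavailable here: when the inequality is strict, $\A$ has a non-free rank-$3$ localization and $\widetilde{D(\A)}$ is not locally free. The identity is true in general, but it needs its own argument --- for instance, that $\operatorname{ch}_2$ of a coherent sheaf is unchanged by maps that are isomorphisms off a codimension-$3$ set, that $\widetilde{D(\A)}$ is locally free and locally split according to the rank-$\leq 2$ localizations away from $\bigcup_{\codim X\geq 3}\bP(X)$, and then a deletion induction computing $c_2$ from the rank-$2$ local data. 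Alternatively, you can bypass Chern classes entirely, as Abe--Yoshinaga do: the elementary identity $b_2(\A)-(|\A|-1)=\sum_{X}\bigl(b_2(\A_X)-(|\A_X|-1)\bigr)$ (sum over $\codim X=3$, $X\subset H_1$) together with the local--global formula of \cite{atw-char}, which gives $\sum_{i<j}d_id_j=\sum_X d_1^Xd_2^X$ for the free multirestriction, reduces the inequality directly to Theorem \ref{thm:char3}(i) with no sheaf theory. As written you flag this point as the ``main obstacle'' but do not resolve it, so the proof is incomplete there; with either patch it becomes a correct proof.
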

\begin{rem}
At a glance, this result is similar to that of 
Elencwajg-Forster (Theorem \ref{thm:horrocks} (3) and 
see Bertone-Roggero \cite{br-spl} for torsion free case). 
However at this moment, we can not find any (simple) 
logical implications. 
\end{rem}

\begin{example}
\label{ex:g2cat}
Let $\A=\{H_0, H_1, \dots, H_{18}\}$ 
be the cone of the $G_2$-Catalan arrangement 
$G_2^{[-1,1]}$ (see Figure \ref{fig:G2cat}), where $H_0$ is 
corresponding to the line at infinity. 
Using Abe's inequality 
(Theorem \ref{thm:2exp} (ii)) and Theorem 
\ref{thm:char3}, we can prove the freeness of $\A$ as follows. 
First the characteristic polynomial is 
$$
\chi(\A, t)=(t-1)(t-7)(t-11). 
$$
Let us consider the multirestriction $(\A^{H_0}, \m^{H_0})$. 
Put the exponents $\exp(\A^{H_0}, \m^{H_0})=(d_1, d_2)$. 
Then by Theorem \ref{thm:char3}, 
$$
d_1d_2\leq 77.
$$
Since the multirestriction is balanced, by Abe's inequality, 
we have 
$$
|d_1-d_2|\leq 6-2=4.
$$
Combining these two inequalities, we have $d_1d_2=77$ hence 
$\A$ is free with exponents $(1, 7, 11)$. 
\begin{figure}[htbp]
\begin{picture}(400,200)(0,0)
\thicklines
\multiput(0,85.56582)(0,14.43418){3}{\line(1,0){200}}
\multiput(75,0)(25,0){3}{\line(0,1){200}}
\qbezier(200,128.86)(200,128.86)(0,13.4)
\qbezier(200,157.73)(200,157.73)(0,42.27)
\qbezier(200,186.6)(200,186.6)(0,71.14)
\qbezier(0,128.86)(0,128.86)(200,13.4)
\qbezier(0,157.73)(0,157.73)(200,42.27)
\qbezier(0,186.6)(0,186.6)(200,71.14)

\qbezier(141.068,200)(141.068,200)(25.6,0)
\qbezier(157.734,200)(100,100)(42.266,0)
\qbezier(174.401,200)(174.401,200)(58.932,0)
\qbezier(141.068,0)(141.068,0)(25.6,200)
\qbezier(157.734,0)(100,100)(42.266,200)
\qbezier(174.401,0)(174.401,0)(58.932,200)

\put(230,100){\line(1,0){140}}
\put(300,30){\line(0,1){140}}
\qbezier(335,160.62)(300,100)(265,39.38)
\qbezier(265,160.62)(300,100)(335,39.38)
\qbezier(360.62,135)(300,100)(239.38,65)
\qbezier(360.62,65)(300,100)(239.38,135)

\put(230,135){$3$}
\put(363,135){$3$}
\put(260,163){$3$}
\put(333,163){$3$}
\put(297,173){$3$}
\put(373,97){$3$}



\end{picture}
      \caption{$G_2^{[-1,1]}$ and restriction of its cone 
$cG_2^{[-1,1]}$ to $H_0$.}
\label{fig:G2cat}
\end{figure}
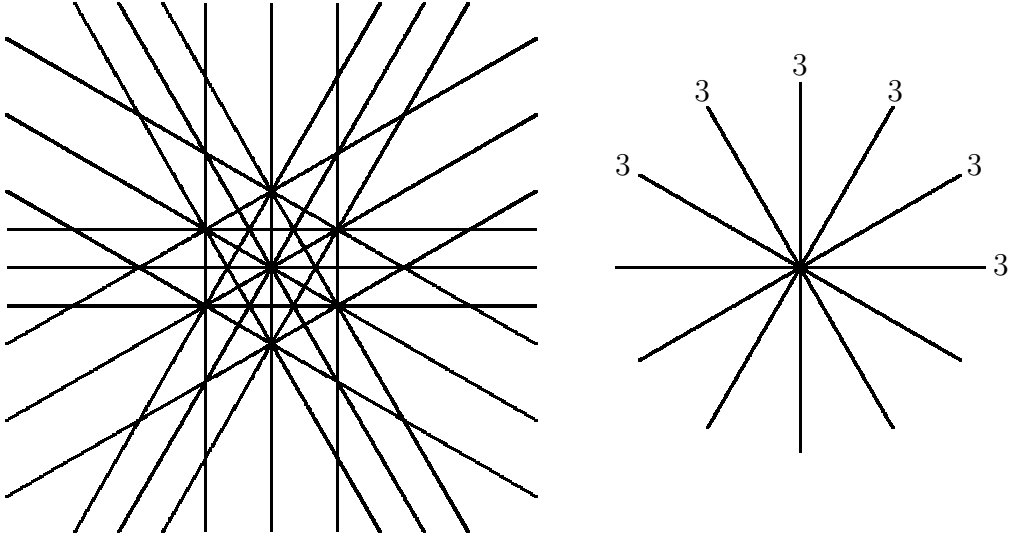
\end{example}
We emphasise that in the above example, 
only the computation of characteristic 
polynomial is enough to prove freeness. 

\subsection{Characteristic polynomials and Chern polynomials}

Let $\A$ be an arrangement in $V$ of $\dim V=\ell$. 
By Terao's factorization theorem, if $\A$ is free with exponents 
$(d_1, \dots, d_\ell)$, then 
$$
\chi(\A, t)=\prod_{i=1}^\ell(t-d_i). 
$$
On the other hand, the sheafification splits 
$\widetilde{D(\A)}=\calO_{\bP^{\ell-1}}(-d_1)\oplus\dots\oplus
\calO_{\bP^{\ell-1}}(-d_1)$. The Chern polynomial of this 
sheaf is 
\begin{equation}
\begin{split}
c_t(\widetilde{D(\A)})
&=
\sum_{i=1}^{\ell-1}c_i(\widetilde{D(\A)})t^i\\
&\equiv
\prod_{i=1}^\ell(1-d_it) \mod t^\ell,  
\end{split}
\end{equation}
where $c_i(-)$ is $i$-th Chern number. It is easily seen that 
these two polynomials are related by the following formula 
\begin{equation}
\label{eq:ch}
t^\ell\cdot
\chi(\A, \frac{1}{t})=
c_t(\widetilde{D(\A)})
\mod t^\ell. 
\end{equation}
Note that the left hand side of (\ref{eq:ch}) is computed by 
Solomon-Terao's formula (Theorem \ref{thm:stf}). 
Musta\c t\v a and Schenck proved that a similar formula computes 
the Chern polynomial for arbitrary vector bundle on the 
projective space. 

\begin{theorem}
\label{thm:ms}
(Musta\c t\v a and Schenck \cite{mus-sch}) 
Let $\calE$ be a vector bundle over $\bP^n$ of rank $r$. Then 
$$
c_t(\calE)=
\lim_{x\rightarrow 1}
(-t)^r(1-x)^{n+1-r}
\sum_{i=0}^r \Hilb(\Gamma_*(\bigwedge^i\calE), x)
\left(
\frac{x-1}{t}-1
\right)^i. 
$$
\end{theorem}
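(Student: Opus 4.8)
The plan is to first verify the formula for a direct sum of line bundles, where it reduces to an elementary computation, and then to bootstrap to an arbitrary bundle $\calE$ by means of a finite resolution by such sums, together with the multiplicativity of the $\lambda$-operations in $K$-theory.

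\emph{The split case.} Suppose $\calE=\bigoplus_{j=1}^{r}\calO(a_{j})$. Since $\Gamma_{*}(\calO(a))=S[a]$ has Hilbert series $x^{-a}/(1-x)^{n+1}$ and $\bigwedge^{i}\calE=\bigoplus_{|T|=i}\calO\bigl(\sum_{j\in T}a_{j}\bigr)$, one gets $\Hilb(\Gamma_{*}(\bigwedge^{i}\calE),x)=e_{i}(x^{-a_{1}},\dots,x^{-a_{r}})/(1-x)^{n+1}$ with $e_{i}$ the $i$-th elementary symmetric polynomial, hence $\sum_{i=0}^{r}\Hilb(\Gamma_{*}(\bigwedge^{i}\calE),x)\,u^{i}=(1-x)^{-(n+1)}\prod_{j=1}^{r}(1+u\,x^{-a_{j}})$. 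Multiplying by $(-t)^{r}(1-x)^{n+1-r}$ and substituting $u=(x-1)/t-1$ makes the expression factor as $\prod_{j=1}^{r}\dfrac{-t\,(1+u\,x^{-a_{j}})}{1-x}=\prod_{j=1}^{r}\Bigl(\dfrac{t\,(x^{-a_{j}}-1)}{1-x}+x^{-a_{j}}\Bigr)$, and each factor tends to $1+a_{j}t$ as $x\to1$; so the right-hand side of the theorem equals $\prod_{j=1}^{r}(1+a_{j}t)=c_{t}(\calE)$.

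\emph{The general case.} Every vector bundle $\calE$ on $\bP^{n}$ admits a finite resolution $0\to L_{m}\to\cdots\to L_{0}\to\calE\to0$ by direct sums of line bundles, so $[\calE]=\sum_{k}(-1)^{k}[L_{k}]$ in $K(\bP^{n})$ and, by the multiplicativity of the total $\lambda$-operation, $\sum_{i}[\bigwedge^{i}\calE]\,u^{i}=\prod_{k}\bigl(\sum_{a}[\bigwedge^{a}L_{k}]\,u^{a}\bigr)^{(-1)^{k}}$. The key observation is that, by Serre vanishing, $\Hilb(\Gamma_{*}(\bigwedge^{i}\calE),x)$ differs by a Laurent polynomial in $x$ from the rational function attached to the Hilbert polynomial of $\bigwedge^{i}\calE$, and the latter depends only on the class $[\bigwedge^{i}\calE]$, hence is computed from the line-bundle summands of the $L_{k}$ through the displayed $\lambda$-ring identity. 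Such a Laurent polynomial, multiplied by $(1-x)^{n+1-r}$ and evaluated in the limit $x\to1$, contributes nothing (at least when $r\le n$), so the limit in the theorem is unchanged if $\Hilb(\Gamma_{*}(\bigwedge^{i}\calE),x)$ is replaced by this $K$-theoretic surrogate. Feeding in the $\lambda$-ring identity, cancelling the common factor $(1-x)^{n+1}$, distributing $(-t)^{r}(1-x)^{-r}$ over the product $\prod_{k}(\cdots)^{(-1)^{k}}$ (legitimate since $r=\sum_{k}(-1)^{k}\operatorname{rk}L_{k}$), and applying the elementary limit of the split case to each line-bundle summand, the right-hand side collapses to $\prod_{k}c_{t}(L_{k})^{(-1)^{k}}=c_{t}(\calE)$.

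\emph{Main obstacle.} The delicate step is this last reduction: one must check carefully that the discrepancy between $\Hilb(\Gamma_{*}(\bigwedge^{i}\calE),x)$ and its Euler-characteristic surrogate — which collects the contributions of $H^{\ge1}(\bigwedge^{i}\calE(d))$ and of the failure of $\bigwedge^{i}$ to be exact on the resolution of $\calE$ — is genuinely a Laurent polynomial (here one uses Serre vanishing together with the fact that the principal part at $x=1$ of a Hilbert series is determined by the Hilbert polynomial) and that it makes no contribution to the limit. This is transparent when $r\le n$; when $\operatorname{rk}\calE\ge n+1$ — precisely the situation of $\calE=\widetilde{D(\A)}$ on $\bP^{\ell-1}$ — one needs the additional input that $\sum_{i}(-1)^{i}[\bigwedge^{i}\calE]=0$ in $K(\bP^{n})$ once the rank exceeds $n$, together with a slightly finer vanishing estimate, or else one reads the identity for the Chern polynomial formed from the Chern roots. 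Apart from this bookkeeping, the argument is driven entirely by the $\lambda$-ring structure and the elementary limit $-t(1+u\,x^{-a})/(1-x)\to1+at$, and it is the natural common generalization of formula~(\ref{eq:ch}) for the split (free) case and of Solomon--Terao's formula~(\ref{eq:stf}).
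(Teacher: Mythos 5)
Your verification of the split case is correct, and since the paper only quotes this theorem from \cite{mus-sch} without proof, your general-case argument has to stand on its own. It does not: the step ``feeding in the $\lambda$-ring identity \dots\ the right-hand side collapses to $\prod_k c_t(L_k)^{(-1)^k}$'' is broken, and it is broken already for $r\le n$, not only in the range $r\ge n+1$ that you flag. The theorem's sum is truncated at $i=r$, whereas the identity $\sum_i \lambda^i[\calE]\,u^i=\prod_k\bigl(\sum_a[\bigwedge^a L_k]u^a\bigr)^{(-1)^k}$ is an identity of \emph{infinite} power series in $u$. The coefficients with $i>r$ vanish in $K(\bP^n)$, but the Hilbert-series surrogates you would read off for them from the product are nonzero Laurent polynomials (the map ``$K$-class $\mapsto$ surrogate'' does not kill the relation $(1-[\calO(1)])^{n+1}=0$); each such term is individually harmless in the limit, but their infinite sum against $u^i$ has a pole at $u=-1$, coming from the factors $(1+u\,x^{-a})^{-1}$ contributed by the odd terms of the resolution --- and $u=(x-1)/t-1$ lands exactly at $u=-1$ as $x\to1$. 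One cannot interchange the limit with this tail, so the limit of the truncated surrogate sum is not the limit of the full product. Note also that $\prod_k c_t(L_k)^{(-1)^k}$ is in general only a rational function of $t$ agreeing with $c_t(\calE)$ modulo $t^{n+1}$, while the theorem produces the honest Chern polynomial $\sum_{i=0}^n c_i t^i$ exactly; the Laurent-polynomial ``errors'' you discard are precisely what effects this correction.

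A concrete check: take $\calE=T_{\bP^2}$, so $r=2\le n=2$, with the Euler resolution $0\to\calO\to\calO(1)^3\to T_{\bP^2}\to0$. Then $\Hilb(\Gamma_*(\bigwedge^i T_{\bP^2}),x)$ equals $(1-x)^{-3}$, $(3x^{-1}-1)(1-x)^{-3}$, $x^{-3}(1-x)^{-3}$ for $i=0,1,2$, and expanding at $x=1$ shows the theorem's limit is $1+3t+3t^2=c_t(T_{\bP^2})$, whereas your recipe outputs $\prod_k c_t(L_k)^{(-1)^k}=(1+t)^3$. The discrepancy $t^3$ is exactly the tail: the surrogate of $\sum_{i\ge 3}\lambda^i u^i$ is $u^3/(x^3(1+u))$, and multiplying by $(-t)^2(1-x)$ and letting $x\to 1$ gives $t^3$, not $0$. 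So the delicate point is not the one you identify (the factor $(1-x)^{n+1-r}$ failing to kill individual Laurent polynomials when $r\ge n+1$), and your proposed fix via $\sum_i(-1)^i[\bigwedge^i\calE]=0$ does not address it. Repairing the argument requires tracking the truncation at $i=r$ and the resummed Laurent discrepancies through the pole at $u=-1$; doing that bookkeeping honestly is essentially the content of the Musta\c t\v a--Schenck proof.
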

As a corollary, we have: 
\begin{corollary}
Let $\A$ be a locally free arrangement. Then 
the formula (\ref{eq:ch}) holds. 
\end{corollary}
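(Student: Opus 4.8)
The plan is to apply the Musta\c t\v a--Schenck formula (Theorem~\ref{thm:ms}) to the vector bundle $\calE=\widetilde{D(\A)}$ and match the output against Solomon--Terao's formula (Theorem~\ref{thm:stf}) for $t^\ell\chi(\A,1/t)$. Local freeness enters immediately: it means every $\A_X$ ($X\in L(\A)$, $X\neq 0$) is free, so all stalks of $\widetilde{D(\A)}$ are free, i.e.\ $\widetilde{D(\A)}$ is a genuine vector bundle on $\bP^{\ell-1}$ of rank $r=\ell$. Since here $n=\ell-1$, the exponent $n+1-r$ is $0$, the weight $(1-x)^{n+1-r}$ is $1$, and Theorem~\ref{thm:ms} reads
$$
c_t(\widetilde{D(\A)})=\lim_{x\to 1}(-t)^\ell\sum_{i=0}^\ell \Hilb\bigl(\Gamma_*(\bigwedge^i\widetilde{D(\A)}),x\bigr)\Bigl(\frac{x-1}{t}-1\Bigr)^i.
$$

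Next I would rewrite the Hilbert series here in terms of the modules $\Omega^p(\A)$ that appear in $\Phi(\A;x,y)$. Since $D(\A)$ and $\Omega^1(\A)$ are dual $S$-modules, sheafification gives $\widetilde{D(\A)}^\vee=\widetilde{\Omega^1(\A)}$, again a vector bundle; hence $\bigwedge^p\widetilde{\Omega^1(\A)}$ is locally free, and the canonical map $\bigwedge^p\widetilde{\Omega^1(\A)}\to\widetilde{\Omega^p(\A)}$ is an isomorphism in codimension one — at a generic point of each $H_i$ the arrangement is a single hyperplane and away from $\bigcup H_i$ both sheaves are free of the same rank — hence an isomorphism since both are reflexive. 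Combining this with the perfect pairing of exterior powers of the rank-$\ell$ bundle $\widetilde{D(\A)}$, namely $\bigwedge^i\widetilde{D(\A)}\cong\bigl(\bigwedge^{\ell-i}\widetilde{D(\A)}\bigr)^\vee\otimes\det\widetilde{D(\A)}$, and with $\det\widetilde{D(\A)}=\calO(-|\A|)$ (from Saito's criterion, $\delta_1\wedge\cdots\wedge\delta_\ell=c\,Q\,\partial_{x_1}\wedge\cdots\wedge\partial_{x_\ell}$, $\deg Q=|\A|$), one obtains $\bigwedge^i\widetilde{D(\A)}\cong\widetilde{\Omega^{\ell-i}(\A)}(-|\A|)$. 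Taking $\Gamma_*$ and using that $\Omega^{\ell-i}(\A)$ is reflexive — so, $\ell$ being $\geq 2$, it equals $\Gamma_*$ of its own sheafification, as in~(\ref{eq:globalsection}) — gives $\Gamma_*(\bigwedge^i\widetilde{D(\A)})\cong\Omega^{\ell-i}(\A)[-|\A|]$, hence $\Hilb(\Gamma_*(\bigwedge^i\widetilde{D(\A)}),x)=x^{|\A|}\,\Hilb(\Omega^{\ell-i}(\A),x)$.

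Substituting this into the displayed formula and reindexing by $p=\ell-i$, the right-hand side becomes $\lim_{x\to 1}$ of a factor converging to $t^\ell$ times $\Phi(\A;x,y)$ evaluated at $y=-t/(t+(1-x))$; on the other hand Theorem~\ref{thm:stf} gives $t^\ell\chi(\A,1/t)=\lim_{x\to 1}t^\ell\,\Phi\bigl(\A;x,\tfrac1t(1-x)-1\bigr)$. Both substitutions approach $y=-1$ as $x\to 1$ with the same first-order term in $1-x$, and one checks that they then have equal limits: writing $\Phi(\A;x,y)=\sum_k a_k(x)(y+1)^k$, the existence of $\lim_{x\to 1}\Phi(\A;x,T(1-x)-1)$ for every $T$ (Theorem~\ref{thm:stf}) forces each $a_k(x)$ to have a pole of order at most $k$ at $x=1$, which is precisely what kills, in the limit, the second-order discrepancy between the two substitutions. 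Reading off the coefficients of $t^0,\dots,t^{\ell-1}$ — the $t^\ell$ coefficient being dropped since $c_\ell$ vanishes on $\bP^{\ell-1}$ — yields~(\ref{eq:ch}).

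I expect the real obstacle to be this last reconciliation: the Musta\c t\v a--Schenck and Solomon--Terao formulas are ``the same formula'' only after verifying that the substitutions $y=-t/(t+(1-x))$ and $y=\tfrac1t(1-x)-1$, which disagree at order $(1-x)^2$, produce the same limit, and this is not formal — it uses the pole-order bound above, that is, the content of (the proof of) Theorem~\ref{thm:stf}. The place where the hypothesis is truly indispensable, by contrast, is at the very start (Theorem~\ref{thm:ms} needs $\widetilde{D(\A)}$ to be a vector bundle) and in the identification $\bigwedge^p\widetilde{\Omega^1(\A)}\cong\widetilde{\Omega^p(\A)}$; for a merely reflexive $\widetilde{D(\A)}$ both of these fail, and~(\ref{eq:ch}) itself can fail.
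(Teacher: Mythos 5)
Your proposal is correct and follows exactly the route the paper intends: the corollary is stated as an immediate consequence of the Musta\c t\v a--Schenck formula (Theorem~\ref{thm:ms}) applied to the vector bundle $\widetilde{D(\A)}$ (rank $\ell$ on $\bP^{\ell-1}$, so the factor $(1-x)^{n+1-r}$ disappears), combined with the identification $\Gamma_*(\bigwedge^i\widetilde{D(\A)})\cong\Omega^{\ell-i}(\A)[-|\A|]$ and Solomon--Terao's formula. The paper omits all details, and your write-up supplies them, including the genuinely non-formal point that the two substitutions $y=-t/(t+1-x)$ and $y=\tfrac{1}{t}(1-x)-1$ agree in the limit because the coefficients of $\Phi(\A;x,\cdot)$ in powers of $y+1$ have controlled pole order at $x=1$.
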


Using Musta\c t\v a-Schenck, we can prove the following. 
\begin{theorem}
\label{thm:rk2}
Let $\calE$ be a rank two vector bundle on 
$\bP^2$. Let $L\subset \bP^2$ be a line. Put 
$\calE|_L=\calO_L(d_1)\oplus\calO_L(d_2)$. Then 
$c_2(\calE)\geq d_1d_2$, furthermore 
$$
c_2(\calE)-d_1d_2=
\dim\Coker(\Gamma_*(\calE)\longrightarrow
\Gamma_*(\calE|_L)). 
$$
$\calE$ is splitting if and only if $c_2(\calE)=d_1d_2$. 
\end{theorem}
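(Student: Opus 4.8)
The plan is to analyze the short exact sequence (\ref{eq:short}) for $\calE$ restricted along the line $L\subset\bP^2$, namely
\begin{equation*}
0\longrightarrow \calE(-1)\stackrel{\alpha\cdot}{\longrightarrow}\calE\longrightarrow\calE|_L\longrightarrow 0,
\end{equation*}
and to extract from its long exact cohomology sequence a precise measure of the failure of surjectivity of $\Gamma_*(\calE)\to\Gamma_*(\calE|_L)$ in terms of $H^1(\bP^2,\calE(d))$. The cokernel of $\Gamma_*(\calE)\to\Gamma_*(\calE|_L)$ injects into $\bigoplus_d H^1(\bP^2,\calE(d-1))$, and multiplication by $\alpha$ on $\bigoplus_d H^1(\bP^2,\calE(d))$ is injective in high degrees and surjective in low degrees (by Serre vanishing and Serre duality respectively), so this cokernel is a finite-dimensional graded vector space whose total dimension is a well-defined nonnegative integer. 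The first task is therefore to identify this dimension with $c_2(\calE)-d_1d_2$; the equality $c_2(\calE)=d_1d_2$ (hence splitting, by Proposition \ref{prop:easy}(ii) together with Grothendieck on $L\cong\bP^1$) then follows once we know the dimension is zero exactly when the map is surjective.

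The key computational step is to evaluate both sides using Musta\c t\v a--Schenck (Theorem \ref{thm:ms}) in the case $n=2$, $r=2$. For a rank two bundle on $\bP^2$ the formula reads
\begin{equation*}
c_t(\calE)=\lim_{x\rightarrow 1}t^2(1-x)^{-1}\sum_{i=0}^2\Hilb(\Gamma_*(\textstyle\bigwedge^i\calE),x)\left(\frac{x-1}{t}-1\right)^i,
\end{equation*}
and since $\bigwedge^0\calE=\calO$ and $\bigwedge^2\calE=\det\calE=\calO(c_1)$ have known Hilbert series, the coefficient of $t^2$ on the left, namely $c_2(\calE)$, is expressed through $\Hilb(\Gamma_*(\calE),x)$ alone. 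On the other hand, applying the same formula on $L\cong\bP^1$ (with $n=1$, $r=2$) to $\calE|_L=\calO_L(d_1)\oplus\calO_L(d_2)$ recovers the relation $d_1d_2$ from $\Hilb(\Gamma_*(\calE|_L),x)$. Subtracting, and using that $\Hilb(\Gamma_*(\calE),x)-\Hilb(\Gamma_*(\calE|_L),x)$ differs from $(1-x)\,\Hilb(\Gamma_*(\calE(-1)),x)$ exactly by the Euler-characteristic contributions of $H^0$ and $H^1$ along the sequence above, one matches $c_2(\calE)-d_1d_2$ with $\sum_d\dim\Coker(\Gamma(\calE(d))\to\Gamma(\calE|_L(d)))$, which is $\dim\Coker(\Gamma_*(\calE)\to\Gamma_*(\calE|_L))$.

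Concretely I would organize it as: (1) write the long exact sequence of (\ref{eq:short}) twisted by $\calO(d)$ for all $d$, and sum over $d$ to get $0\to\Gamma_*(\calE(-1))\to\Gamma_*(\calE)\to\Gamma_*(\calE|_L)\to\bigoplus_d H^1(\calE(d-1))\to\bigoplus_d H^1(\calE(d))\to\cdots$; (2) observe $\Coker(\Gamma_*(\calE)\to\Gamma_*(\calE|_L))=\Ker\bigl(\alpha\colon \bigoplus_d H^1(\calE(d-1))\to\bigoplus_d H^1(\calE(d))\bigr)$, a finite-dimensional space; (3) turn Musta\c t\v a--Schenck into an identity of formal series relating $\Hilb(\Gamma_*(\calE),x)$ to $c_1,c_2$ modulo the ambiguity concentrated in $\bigoplus_d H^1(\calE(d))$; (4) do the analogous one-line computation on $\bP^1$; (5) compare. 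The inequality $c_2(\calE)\ge d_1d_2$ is then automatic since the dimension of a vector space is nonnegative, and the splitting criterion follows from Proposition \ref{prop:easy}(ii).

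The main obstacle I anticipate is step (3): making the passage from the limit formula of Musta\c t\v a--Schenck to a clean statement about $\Hilb(\Gamma_*(\calE),x)$ requires controlling the non-polynomial part of the Hilbert series. The Hilbert series $\Hilb(\Gamma_*(\calE),x)$ is a rational function whose ``polynomial-like'' part is governed by $\chi(\calE(d))$ (hence by $c_1,c_2$ via Riemann--Roch on $\bP^2$), but it also carries a genuinely finite correction coming precisely from $h^1(\calE(d))=\dim H^1(\bP^2,\calE(d))$ (note $H^2(\calE(d))$ vanishes in the relevant range by Serre duality and stability-type bounds, or can be absorbed). Disentangling which part of the Musta\c t\v a--Schenck output is $c_2$ and which part is the $H^1$-correction, and checking that the latter matches the cokernel dimension computed in step (2) rather than some shifted or doubled version of it, is the delicate bookkeeping; once the $x\to1$ limit is taken carefully the terms should align, but verifying the exact normalization is where the real work lies. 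Alternatively, one can bypass part of this by invoking Theorem \ref{thm:horrocks}(3) (Elencwajg--Forster with $r=2$, $n=2$: $c_2(\calE)\ge\sum_{i<j}d_id_j=d_1d_2$ for a \emph{generic} line, with equality iff splitting) and then upgrading to an arbitrary line $L$ together with the exact cokernel formula via the cohomology sequence — but the cited refinement is stated to rest on Musta\c t\v a--Schenck, so I expect the intended route is the Hilbert-series computation above.
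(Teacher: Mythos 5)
Your proposal follows essentially the same route as the paper: express $c_2(\calE)$ via Musta\c t\v a--Schenck in terms of $\Hilb(\Gamma_*(\calE),x)$, write $d_1d_2$ as the analogous limit built from $\Hilb(\Gamma_*(\calE|_L),x)=\frac{x^{-d_1}+x^{-d_2}}{(1-x)^2}$, and identify the difference with $\dim\Coker(\Gamma_*(\calE)\to\Gamma_*(\calE|_L))$ using the sequence (\ref{eq:short}), the splitting criterion then following from Proposition \ref{prop:easy}(ii). Two small corrections to your outline: the prefactor in Theorem \ref{thm:ms} for $n=r=2$ is $t^2(1-x)^{+1}$, not $t^2(1-x)^{-1}$; and applying Musta\c t\v a--Schenck on $L\cong\bP^1$ only determines $c_t$ modulo $t^2$, so it cannot ``recover'' $d_1d_2$ --- one instead verifies directly the elementary identity $d_1d_2=\lim_{x\to1}\bigl(1-x^{-d_1}\bigr)\bigl(1-x^{-d_2}\bigr)/(1-x)^2$, as the paper does.
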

\begin{proof}
By Theorem \ref{thm:ms}, the second Chern class is 
$$
c_2(\calE)=\lim_{x\rightarrow 1}
\left(
\frac{1}{(1-x)^2}
-(1-x)\Hilb(\Gamma_*(\calE),x)+
\frac{x^{-c_1(\calE)}}{(1-x)^2}
\right). 
$$
On the other hand, $c_1(\calE)=d_1+d_2$ and 
$$
d_1d_2=
\lim_{x\rightarrow 1}
\left(
\frac{1}{(1-x)^2}-
\frac{x^{-d_1}+x^{-d_2}}{(1-x)^2}+
\frac{x^{-d_1-d_2}}{(1-x)^2}
\right). 
$$
Hence 
\begin{equation*}
\begin{split}
c_2(\calE)-d_1d_2&=
\lim_{x\rightarrow 1}
\left(
\frac{x^{-d_1}+x^{-d_2}}{(1-x)^2}
-(1-x)\Hilb(\Gamma_*(\calE),x)
\right)\\
&=
\lim_{x\rightarrow 1}
\left(
\Hilb(\Gamma_*(\calE|_L), x)-
\Hilb(\Image(\Gamma_*(\calE)\rightarrow\Gamma_*(\calE|_L)), x)
\right)\\
&=
\dim\Coker(\Gamma_*(\calE)\longrightarrow
\Gamma_*(\calE|_L)). 
\end{split}
\end{equation*}

\end{proof}

\subsection{Around Terao Conjecture}

In \cite{ter-conj}, Terao posed the following problem. 
\begin{pbm}
Let $\A_1, \A_2$ be arrangements in $V$ s. t. 
$L(\A_1)\simeq L(\A_2)$. Assume that $\A_1$ is free. Then 
is $\A_2$ also free? 
\end{pbm}
It is obviously true in dimension 
$2$. However the cases $\ell\geq 3$ 
are still open. In view of Theorem \ref{thm:char3}, 
if the exponents of multirestriction were determined 
combinatorially, the freeness is also determined combinatorially. 
\begin{prop}
Let $\A_1, \A_2$ be in $V$ of $\dim V=\ell=3$ such that 
$L(\A_1)\simeq L(\A_2)$. Assume that $\A_1$ is free. 
If there exists a hyperplane $H\in\A$ such that 
the multirestriction $(\A^H, \m^H)$ satisfies 
one of conditions in Proposition \ref{prop:typical}, 
then $\A_2$ is also free. 
\end{prop}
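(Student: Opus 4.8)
The plan is to use Theorem~\ref{thm:char3} as the bridge between the combinatorial data and freeness. Since $L(\A_1)\simeq L(\A_2)$, the characteristic polynomials agree: $\chi(\A_1,t)=\chi(\A_2,t)=(t-1)(t^2-b_1t+b_2)$. Fix a hyperplane $H\in\A_1$ and let $H'\in\A_2$ be the hyperplane corresponding to it under the lattice isomorphism. The key point is that Ziegler's multirestriction $(\A^H,\m^H)$ depends only on the induced arrangement $\A^H$ on $H$ together with the multiplicity function $\m^H(X)=\sharp\{H'\in\A\mid H'\supset X\}-1$, and all of this data is read off from the interval $[H,\hat 1]$ (equivalently, from $L(\A_H)$) in the intersection lattice. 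Hence $L(\A_1^H)\simeq L(\A_2^{H'})$ as (multi)arrangements: they have the same number of points with the same multiplicities. So if $(\A_1^H,\m^H)$ satisfies one of the conditions (i)--(iv) of Proposition~\ref{prop:typical}, so does $(\A_2^{H'},\m^{H'})$, and both have the same exponents $(d_1,d_2)$ determined by that proposition.

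Next I would extract the numerical consequence of freeness of $\A_1$. Since $\A_1$ is free, it is free with exponents $(1,d_1,d_2)$ by Ziegler's theorem (the $\ell=3$ case of the theorem attributed to Ziegler in the excerpt), so by Terao's factorization theorem $\chi(\A_1,t)=(t-1)(t-d_1)(t-d_2)$, whence $b_2=d_1d_2$. Because $\chi$ is a combinatorial invariant, the same identity $b_2=d_1d_2$ holds for $\A_2$. Now apply Theorem~\ref{thm:char3}(ii) to $\A_2$ with the hyperplane $H'$: its hypothesis is exactly $b_2=d_1d_2$ where $(d_1,d_2)=\exp(\A_2^{H'},\m^{H'})$, and we have just verified both that these exponents equal the combinatorially determined pair and that $b_2$ equals their product. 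The conclusion is that $\A_2$ is free with exponents $(1,d_1,d_2)$.

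Let me lay out the steps in order: (1) transport the lattice isomorphism to an isomorphism of the pointed multiarrangements $(\A_1^H,\m^H)\simeq(\A_2^{H'},\m^{H'})$, noting that Ziegler's multirestriction is determined by the localization $L(\A_H)$; (2) invoke Proposition~\ref{prop:typical} to conclude both multirestrictions are free with the \emph{same} combinatorially determined exponents $(d_1,d_2)$; (3) from freeness of $\A_1$, use Ziegler plus Terao's factorization theorem to deduce $b_2=d_1d_2$, and observe $\chi(\A_1,t)=\chi(\A_2,t)$ forces the same relation for $\A_2$; (4) feed $b_2=d_1d_2$ and $\exp(\A_2^{H'},\m^{H'})=(d_1,d_2)$ into Theorem~\ref{thm:char3}(ii) to get freeness of $\A_2$.

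The main obstacle is step~(1): one must be careful that the conditions in Proposition~\ref{prop:typical} are genuinely lattice-theoretic, i.e.\ that the multiplicity $\m^H$ is determined by $L(\A)$. This is true because $\m^H(X)+1$ counts hyperplanes through $X$, a number visible in $L(\A)$, and the underlying point set $\A^H$ is in bijection with the atoms of the interval above $H$. Thus conditions like ``$m_1\geq m/2$'', ``$n\geq m/2+1$'', ``all $m_i=2$'', or ``$n=3$ with $m_1\leq m_2+m_3$'' are all preserved under $L(\A_1)\simeq L(\A_2)$. Once this bookkeeping is in place, the rest is a direct chaining of the quoted theorems; no further subtlety arises because in dimension $3$ the hypotheses ``locally free along $H$'' needed elsewhere are automatic and Theorem~\ref{thm:char3} needs only the characteristic polynomial and the multirestriction exponents, both of which we control.
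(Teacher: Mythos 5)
Your proof is correct and follows exactly the route the paper intends (the paper leaves this proposition without an explicit proof, but the preceding sentence makes clear the argument is: combinatorially determined multirestriction exponents plus Theorem \ref{thm:char3} yield combinatorially determined freeness). Your care in checking that $\m^H$ and hence the hypotheses and conclusions of Proposition \ref{prop:typical} are read off from $L(\A)$, and that $b_2=d_1d_2$ for $\A_1$ via Ziegler and Terao's factorization, fills in precisely the bookkeeping the paper omits.
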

Thus the difficulty of Terao's conjecture for $\ell=3$ is 
equivalent to the difficulty of determining 
exponents of $2$-multiarrangements. 

A possible approach to Terao's conjecture is to look 
at the set of arrangements which have prescribed 
intersection lattice, and then analyze the freeness on the set. 
We first introduce such a set, the parameter space of 
arrangements having the fixed lattice. 
Let $\ell\geq 3$, $n\geq 1$. Fix a poset $L$. Then define 
the set $\calM_{\ell,n}(L)$ of arrangements with lattice $L$ by 
$$
\calM_{\ell,n}(L)
=
\{\A=(H_1, \dots, H_n)\in
(\bP^{\ell-1*})^n\mid 
H_i\neq H_j, L(\A)\simeq L\}. 
$$
Terao's conjecture is equivalent to the preservation of the 
freeness/nonfreeness on $\calM_{\ell,n}(L)$. 
Yuzvinsky proved that free arrangements form a Zariski open subset 
in $\calM_{\ell,n}(L)$. 
\begin{theorem}
(Yuzvinsky \cite{yuz-sheaf, yuz-obst, yuz-loc}) 
$$
\calM_{\ell,n}(L)^{\tiny \mbox{free}}=
\{\A\in\calM_{\ell,n}(L)\mid 
\mbox{$\A$ is free }
\}
$$
is a Zariski open subset of $\calM_{\ell,n}(L)$. 
\end{theorem}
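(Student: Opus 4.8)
Write $\calM:=\calM_{\ell,n}(L)$ and let $\calM^{\mathrm{free}}\subseteq\calM$ be the set of free arrangements; the characteristic polynomial $\chi(\A,t)$ depends only on $L$, so denote it $\chi_L(t)$. If $\calM^{\mathrm{free}}=\emptyset$ there is nothing to prove, so the plan is to fix a free $\A_0\in\calM$ and produce a Zariski open subset of $\calM$ which contains $\calM^{\mathrm{free}}$ and consists of free arrangements. By Terao's factorization theorem (Theorem \ref{thm:tfact}) the exponents of $\A_0$, say $d_1\geq\dots\geq d_\ell\geq 0$, are precisely the roots of $\chi_L$; hence every free $\A\in\calM$ has these same exponents, so $D(\A)$ and $D(\A_0)$ are isomorphic graded modules and $\dim_\bC D(\A)_d=\dim_\bC D(\A_0)_d$ for all $d$.

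First I would use semicontinuity of the graded pieces. For fixed $d$, the condition $\delta\alpha_i\in(\alpha_i)$ on a polynomial-degree-$d$ vector field $\delta$ says that the image of $\delta\alpha_i$ in $S/(\alpha_i)$ vanishes; in a standard affine chart of each factor $\bP^{\ell-1*}$ this becomes a linear system in the coefficients of $\delta$ whose matrix $M_d(\A)$ has entries regular on $\calM$. Thus $D(\A)_d=\Ker M_d(\A)$ and, the rank being lower semicontinuous, $\A\mapsto\dim_\bC D(\A)_d$ is upper semicontinuous on $\calM$. Put $m_i:=\min_{\A\in\calM}\dim_\bC D(\A)_{d_i}$ and $U:=\bigcap_{i=1}^\ell\{\A\in\calM\mid\dim_\bC D(\A)_{d_i}=m_i\}$, a non-empty Zariski open subset. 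Over $U$ the subspaces $D(\A)_{d_i}\subseteq(\Der_V)_{d_i}$ form algebraic vector bundles $\calW_1,\dots,\calW_\ell$ (kernels of bundle maps of constant rank), and, writing $\delta_i=\sum_j f_{ij}\partial_{x_j}$, the assignment $(\delta_1,\dots,\delta_\ell)\mapsto\det(f_{ij})\in S_n$ (recall $\sum_i d_i=n$) is multilinear and polynomial in the $f_{ij}$, hence a regular section $\sigma$ of $(\calW_1\otimes\dots\otimes\calW_\ell)^\vee\otimes_\bC S_n$ over $U$. Its non-vanishing locus $\{\sigma\neq 0\}$ is Zariski open in $\calM$; and if $\sigma_\A\neq 0$ then, since $\det(f_{ij})$ is always divisible by $Q(\A)$ (of the same degree $n$), a choice of $\delta_i$ with $\det(f_{ij})\neq 0$ gives $\delta_1\wedge\dots\wedge\delta_\ell=c\,Q(\A)\,\partial_{x_1}\wedge\dots\wedge\partial_{x_\ell}$ with $c\in\bC^*$, so $\A$ is free by Saito's criterion (Theorem \ref{thm:saitocri}). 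Conversely a free $\A$ has a homogeneous free basis of $D(\A)$ in degrees $(d_1,\dots,d_\ell)$, so $\sigma_\A\neq 0$ at any free $\A$ that lies in $U$. Hence the proof reduces to showing $\calM^{\mathrm{free}}\subseteq U$; since a free $\A$ satisfies $\dim_\bC D(\A)_{d_i}=\dim_\bC D(\A_0)_{d_i}$ and $\A_0\in\calM$, it is enough to prove the lower bound
$$
\dim_\bC D(\A)_d\ \geq\ \dim_\bC D(\A_0)_d\qquad\text{for every }\A\in\calM\text{ and every }d\in\bZ,
$$
for then $m_i=\dim_\bC D(\A_0)_{d_i}$, every free $\A$ lies in $U$, and $\calM^{\mathrm{free}}=\{\sigma\neq 0\}$ is Zariski open.

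This lower bound — in words, that a free arrangement has the \emph{smallest} module of logarithmic vector fields, degree by degree, among all arrangements with its lattice — is the principal obstacle, and it is what upgrades ``free'' from a constructible to an open condition; I would prove it cohomologically. By (\ref{eq:globalsection}) and Proposition \ref{prop:split}, $\dim_\bC D(\A)_d=h^0(\bP^{\ell-1},\widetilde{D(\A)}(d))$. Two preliminary points: (a) the Hilbert polynomial of $D(\A)$ depends only on $L$ — from $0\to\widetilde{D(\A)}\to\widetilde{\Der_V}\to\calT_\A\to 0$, whose cokernel $\calT_\A$ is supported on $\bigcup_{H\in\A}H$ with $L$-determined local structure, hence $L$-determined Euler characteristic — so that $\{\widetilde{D(\A)}\}_{\A\in\calM}$ is a flat family over $\calM$ and $\chi(\widetilde{D(\A)}(d))=\chi(\widetilde{D(\A_0)}(d))$ for all $\A,d$; and (b) by the Horrocks-type criterion for reflexive sheaves (Theorem \ref{thm:horrocks}\,(1) and Remark \ref{rem:horrocksref}), freeness of $\A$ is equivalent to the vanishing of $H^i(\widetilde{D(\A)}(d))$ for all $d$ and $0<i<\ell-1$. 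Writing $h^0=\chi+\sum_{i\geq 1}(-1)^{i-1}h^i$ and subtracting the corresponding identity for $\A_0$ (where the intermediate $h^i$ vanish) yields
$$
\dim_\bC D(\A)_d-\dim_\bC D(\A_0)_d=\sum_{i=1}^{\ell-1}(-1)^{i-1}\bigl(h^i(\widetilde{D(\A)}(d))-h^i(\widetilde{D(\A_0)}(d))\bigr),
$$
and the hard part is the nonnegativity of the right-hand side in the degrees $d=d_1,\dots,d_\ell$: the top term $(i=\ell-1)$ is controlled via Serre duality and the reflexive duality $\widetilde{D(\A)}^\vee=\widetilde{\Omega^1(\A)}$ — which, together with quasi-compactness of $\calM$, also furnishes a uniform Castelnuovo–Mumford regularity bound reducing all vanishing conditions to finitely many $(i,d)$ — while the remaining positivity of the intermediate cohomology is the substance of Yuzvinsky's analysis in \cite{yuz-sheaf, yuz-obst, yuz-loc}. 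Once the lower bound is in hand the openness follows as above; alternatively, semicontinuity of $\A\mapsto h^i(\widetilde{D(\A)}(d))$ in the flat family, applied to the finitely many relevant $(i,d)$, exhibits $\calM^{\mathrm{free}}$ directly as an intersection of Zariski open sets. (For $\ell=3$ the sheaf $\widetilde{D(\A)}$ is a bundle on $\bP^2$, only $H^1$ occurs, and this last step is immediate from semicontinuity of $h^1$.)
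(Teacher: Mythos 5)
Your overall architecture is the right one (semicontinuity of $\dim_\bC D(\A)_d$ plus Saito's criterion on a locus where these dimensions are constant), but the argument has a genuine gap exactly where you say the difficulty lies, and you do not close it: the containment $\calM^{\mathrm{free}}\subseteq U$, i.e.\ the claim that a free arrangement attains the \emph{global minimum} of $\dim_\bC D(\A)_{d_i}$ over all of $\calM_{\ell,n}(L)$. Without that, upper semicontinuity only shows that the locus $\{\dim_\bC D(\A)_{d_i}=c_i\ \forall i\}$ (with $c_i$ the value at a free point) is \emph{locally closed}, so your determinant section $\sigma$ exhibits $\calM^{\mathrm{free}}$ as constructible, not open. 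Your cohomological sketch of the lower bound is not a proof: the alternating sum $\sum_{i\ge1}(-1)^{i-1}\bigl(h^i(\widetilde{D(\A)}(d))-h^i(\widetilde{D(\A_0)}(d))\bigr)$ has no evident sign, and you defer precisely this positivity to ``the substance of Yuzvinsky's analysis'' in \cite{yuz-sheaf, yuz-obst, yuz-loc} --- which is circular, since that is the theorem being proved. The auxiliary inputs are also not free of charge for $\ell\ge4$: flatness of the family $\{\widetilde{D(\A)}\}$ rests on the Hilbert polynomial of $D(\A)$ being determined by $L$, which you derive from the cokernel of $\widetilde{D(\A)}\hookrightarrow\widetilde{\Der_V}$ having ``$L$-determined local structure''; but that local structure along a flat $X$ is governed by $D(\A_X)$, whose lattice-determinacy is again an instance of the open Terao-type problem (Theorem \ref{thm:ms} only covers locally free arrangements). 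Note that the paper itself does not prove this theorem either; it describes Yuzvinsky's actual mechanism (a lattice cohomology whose vanishing in positive degrees characterizes freeness and whose dimensions are semicontinuous), and it explicitly poses as an open problem the comparison between that lattice cohomology and sheaf cohomology on projective space --- which is essentially the bridge your sketch would need.

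Where your proposal does essentially work is $\ell=3$, as you note in the final parenthesis: there $\widetilde{D_0(\A)}$ is a rank-two bundle on $\bP^2$, its Chern classes are lattice-determined by Theorem \ref{thm:ms}, only $H^1$ is intermediate, and the relevant twists are bounded, so semicontinuity of $h^1$ in a flat family does give openness. Compare this with the route the paper actually takes for $\ell=3$ (Theorem \ref{thm:copen}): fix $H_1$, use Theorem \ref{thm:char3} to reduce freeness to the equality $b_2=d_1d_2$ for the Ziegler multirestriction, and then invoke the elementary upper semicontinuity of $\Delta(\A^{H_1},\m^{H_1})$ (Proposition \ref{prop:upper}). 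That argument is self-contained and avoids sheaf cohomology entirely, at the cost of being special to $\ell=3$; your bundle-theoretic version is the natural geometric counterpart, but for general $\ell$ the crucial positivity/vanishing step remains unproved in your write-up.
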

In his proof, Yuzvinsky defines lattice cohomology using the 
structure of $L(\A)$ and $D(\A)$. Then he characterizes 
the freeness of $\A$ via vanishings of these cohomology groups. 
The statement looks very similar to that of Horrocks 
(Theorem \ref{thm:horrocks} (1)). 

\begin{pbm}
Establish the relation between 
Yuzvinsky's and Horrocks' criteria for freeness. 
(More precisely, establish the relation between 
Yuzvinsky's lattice cohomology and sheaf cohomology on 
$\bP^n$.) 
\end{pbm}

Here we recover (slightly modified version of) 
Yuzvinsky's openness result for $\ell=3$ by 
using upper semicontinuity of exponents of $2$-multiarrangements. 
Similar to $\calM_{\ell,n}(L)$, we introduce the following set 
of arrangements which have prescribed characteristic polynomial. 
Let $f(t)\in\bZ[t]$. 
$$
\calC_{\ell,n}(f)
=
\{\A=(H_1, \dots, H_n)\in
(\bP^{\ell-1*})^n\mid 
H_i\neq H_j, \chi(\A, t)=f(t)\}. 
$$

\begin{theorem}
\label{thm:copen}
The set 
$$
\calC_{\ell,n}(f)^{\tiny \mbox{free}}=
\{\A\in\calC_{\ell,n}(f)\mid\mbox{$\A$ is free}\}
$$
is a Zariski open subset of $\calC_{\ell,n}(f)$. 
\end{theorem}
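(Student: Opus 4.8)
The plan is to reduce the statement, via Terao's factorization theorem and the multirestriction criteria for freeness, to an upper semicontinuity property of logarithmic derivations in the spirit of Proposition~\ref{prop:upper}. First, if $f(t)$ does not factor as $f(t)=\prod_{i=1}^{\ell}(t-d_i)$ with all $d_i\in\bZ_{\ge 0}$, then by Terao's factorization theorem (Theorem~\ref{thm:tfact}) no arrangement in $\calC_{\ell,n}(f)$ is free, so $\calC_{\ell,n}(f)^{\mathrm{free}}=\emptyset$ is trivially Zariski open. Hence we may assume $f(t)=\prod_{i=1}^{\ell}(t-d_i)$; since the $t^{\ell-1}$-coefficient of $\chi(\A,t)$ is $-n$ for every $\A$, we may further assume $\sum_i d_i=n$, and then every free $\A\in\calC_{\ell,n}(f)$ has exponents exactly $(d_1,\dots,d_\ell)$. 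It therefore suffices to produce a Zariski open $U\subseteq(\bP^{\ell-1*})^n$ with $\calC_{\ell,n}(f)^{\mathrm{free}}=\calC_{\ell,n}(f)\cap U$.

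Consider first $\ell=3$, which carries the substance. Here $f(t)=(t-1)(t-d_2)(t-d_3)$ with $d_2\le d_3$ and $d_2+d_3=n-1$, and along $\calC_{3,n}(f)$ one has $\chi(\A,t)=(t-1)(t^2-(n-1)t+b_2)$ with $b_2=d_2d_3$ \emph{fixed}. Fix the first hyperplane $H_1$ and let $(\A^{H_1},\m^{H_1})$ be Ziegler's multirestriction, a $2$-multiarrangement with $|\m^{H_1}|=n-1$; write $\exp(\A^{H_1},\m^{H_1})=(e_1,e_2)$, so $e_1+e_2=n-1$. By Theorem~\ref{thm:char3}, $b_2\ge e_1e_2$, with equality if and only if $\A$ is free (necessarily with exponents $(1,d_2,d_3)$). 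Since $e_1e_2=\tfrac14\big((n-1)^2-(e_1-e_2)^2\big)$ and likewise $d_2d_3=\tfrac14\big((n-1)^2-(d_2-d_3)^2\big)$, the equality $b_2=e_1e_2$ is equivalent to $|e_1-e_2|=|d_2-d_3|$, i.e.\ to $\Delta(\A^{H_1},\m^{H_1})$ attaining its minimal possible value $|d_2-d_3|$. Thus, for $\A\in\calC_{3,n}(f)$, freeness is equivalent to $\Delta(\A^{H_1},\m^{H_1})<|d_2-d_3|+1$, and it remains to see this condition is Zariski open.

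Rather than extending Proposition~\ref{prop:upper} with a varying Ziegler multiplicity, I would argue on the simple arrangement side, where the relevant modules deform linearly. Choose coordinates with $x_1=\alpha_{H_1}$; then $D(\A)=S\theta_E\oplus D_1(\A)$, and $\A$ is free with exponents $(1,d_2,d_3)$ if and only if $D_1(\A)$ is free with exponents $(d_2,d_3)$, if and only if (Saito's criterion, Theorem~\ref{thm:saitocri}, for the basis $\theta_E,\delta_2,\delta_3$ of $D(\A)$) there exist $\delta_2\in D_1(\A)_{d_2}$, $\delta_3\in D_1(\A)_{d_3}$ with $\theta_E\wedge\delta_2\wedge\delta_3\ne 0$. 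Writing $\delta_j=f_{j2}\partial_{x_2}+f_{j3}\partial_{x_3}$, one has $\theta_E\wedge\delta_2\wedge\delta_3=x_1\,W(\delta_2,\delta_3)\,\partial_{x_1}\wedge\partial_{x_2}\wedge\partial_{x_3}$ with $W(\delta_2,\delta_3)=f_{22}f_{33}-f_{23}f_{32}$; since $\theta_E,\delta_2,\delta_3\in D(\A)$ this $3$-form is always divisible by $Q(\A)=x_1\prod_{i\ge 2}\alpha_i$, and $\deg W=d_2+d_3=\deg\prod_{i\ge 2}\alpha_i$, so $W(\delta_2,\delta_3)$ is automatically a scalar multiple of $\prod_{i\ge 2}\alpha_i$. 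Hence $W$ induces a bilinear pairing $D_1(\A)_{d_2}\times D_1(\A)_{d_3}\to\bC$, and $\A$ is free exactly when this pairing is nonzero. Now $D_1(\A)_d$ is the kernel of a matrix whose entries are polynomials in the coefficients of the $\alpha_i$ — the conditions $\delta\alpha_i\in(\alpha_i)$ being one linear condition per hyperplane, with no multiplicity effects — so $\dim_{\bC}D_1(\A)_d$ is upper semicontinuous in $\A$, and the spaces $D_1(\A)_{d_2},D_1(\A)_{d_3}$ together with the polynomially varying pairing $W$ assemble into a family over $(\bP^{2*})^n$. The genuinely delicate point — the technical heart — is to organize this family so that ``$W$ vanishes identically on the fiber $D_1(\A)_{d_2}\times D_1(\A)_{d_3}$'' is a closed condition on $\A$, despite the jumps of $\dim_{\bC}D_1(\A)_{d_i}$; this is the same mechanism that underlies Proposition~\ref{prop:upper} and the Wakefield--Yuzvinsky argument, and one expects the identical bookkeeping to carry it out, yielding the desired open $U$.

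For $\ell\ge 4$ the same strategy applies with Theorem~\ref{thm:char4} and Theorem~\ref{thm:b2} in place of Theorem~\ref{thm:char3}: fixing $H_1$, freeness of $\A$ amounts to local freeness along $H_1$ (an open condition which, by induction on $\ell$, is inherited from the freeness loci of the strictly lower-rank subarrangements $\A_X$, $X\subset H_1$) together with freeness of $(\A^{H_1},\m^{H_1})$ with the prescribed exponents and the equality $b_2=\sum_{2\le i<j\le\ell}d_id_j$; since $b_2$ is fixed along $\calC_{\ell,n}(f)$, this is again a minimality condition, open by the semicontinuity argument above now applied to $\widetilde{D_1(\A)}$ on $\bP^{\ell-1}$. (Alternatively, one may deduce the general case from Yuzvinsky's openness theorem applied on each stratum $\calM_{\ell,n}(L)\subseteq\calC_{\ell,n}(f)$, provided one checks compatibility across the strata.) In every case the sole real obstacle is the family/semicontinuity argument isolated above.
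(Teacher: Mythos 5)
Your reduction coincides with the paper's up to the decisive step: you use Terao's factorization theorem to dispose of non-split $f$, and Theorem~\ref{thm:char3} to translate freeness of $\A\in\calC_{3,n}(f)$ into the statement that $\Delta(\A^{H_1},\m^{H_1})$ attains its minimal possible value $|d_2-d_3|$. (The paper's proof, despite the general statement, only treats $\ell=3$ --- it immediately assumes $f(t)=(t-1)(t-d_1)(t-d_2)$ --- so your $\ell\ge 4$ paragraph attempts more than the source and I set it aside.) At that point the paper simply applies Proposition~\ref{prop:upper}: the non-free locus inside $\calC_{3,n}(f)$ is $\{\Delta\ge |d_2-d_3|+1\}$, and $\Delta\ge k$ is witnessed by the \emph{existence of a nonzero element} of $D(\A^{H_1},\m^{H_1})$ of degree at most $\lfloor(|\m^{H_1}|-k)/2\rfloor$; existence of a nonzero solution of a homogeneous linear system whose matrix varies polynomially in $\A$ is a determinantal, hence Zariski closed, condition. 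Your objection that Proposition~\ref{prop:upper} is stated for a fixed multiplicity while $\m^{H_1}$ varies with $\A$ is a fair criticism of the paper's gloss, but the remedy is to rewrite the same linear system with the divisibility conditions expressed directly in the coefficients of the ambient $\alpha_i$ --- a routine extension, not a reason to change strategy.

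The genuine gap is that you abandon this mechanism for the Wronskian pairing $W$ on $D_1(\A)_{d_2}\times D_1(\A)_{d_3}$ and then explicitly leave unproven the assertion that ``$W$ vanishes identically on the fiber'' is a closed condition. This is not ``the identical bookkeeping'' as in Proposition~\ref{prop:upper}: since $\dim_\bC D_1(\A)_d$ is only \emph{upper} semicontinuous, the kernel can jump up at special parameters and acquire new vectors on which $W$ could a priori be nonzero, so closedness of $\{W\equiv 0\}$ does not follow; symmetrically, a pair with $W\neq 0$ at $\A_0$ need not deform to nearby $\A$, so openness of the free locus does not follow either. Your formulation characterizes freeness by the existence of good elements, which is the direction that semicontinuity does not control. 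The condition that does behave well is the negation phrased as existence: within $\calC_{3,n}(f)$, $\A$ is non-free if and only if $e_1<d_2$, i.e.\ if and only if $D(\A^{H_1},\m^{H_1})$ contains a nonzero element of degree at most $\min(d_2,d_3)-1$, and \emph{that} is closed for the determinantal reason above. To close your argument you must route it back through this existence statement, which is exactly Proposition~\ref{prop:upper} and hence the paper's proof; as written, the technical heart you isolate is not only unproved but set up in the unfavorable direction.
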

\begin{proof}
By Terao's factorization theorem, if $f(t)$ is not split, then 
$\calC_{\ell,n}(f)^{\tiny \mbox{free}}$ is empty. 
We may assume that 
$f(t)=(t-1)(t-d_1)(t-d_2)$. Fix $H_1\in\A$ and set 
$\exp(\A^{H_1}, \m^{H_1})=(d_1^{H_1}, d_2^{H_1})$. Then by 
Theorem \ref{thm:char3}, 
$|d_1^{H_1}-d_2^{H_1}|\geq |d_1-d_2|$ and 
$\A$ is free if and only if the equality holds. By the 
upper semicontinuity (Proposition \ref{prop:upper}) of 
the difference $\Delta(\A^{H_1}, \m^{H_1})=|d_1^{H_1}-d_2^{H_1}|$, 
the free locus $\{\A\mid \Delta<|d_1-d_2|+\frac{1}{2}\}$ is a 
Zariski open subset of $\calC_{\ell,n}(f)$. 
\end{proof}
Let $L$ be a poset, and $f(t)$ be the corresponding characteristic polynomial. 
Then $\calM_{\ell,n}(L)\subset \calC_{\ell,n}(f)$. Since 
$\calM_{\ell,n}(L)^{\tiny \mbox{free}}=
\calM_{\ell,n}(L)\cap \calC_{\ell,n}(f)^{\tiny \mbox{free}}$. 
We have obtained Yuzvinsky's 
openness result for $\ell=3$. 


We conclude this section with an observation. 
Lots of free arrangements which are not inductively free 
are rigid. It seems natural to ask whether or not the following 
(which is stronger than Terao conjecture) holds: 
\begin{equation}
\label{eq:rigid}
\{\mbox{Free arrangements}\}\subset
\{\mbox{Inductively free}\}
\cup
\{\mbox{Rigid}\}. 
\end{equation}

\subsection{Affine connection $\nabla$}
\label{subsec:conn}

\begin{defn}
Let $\delta, \theta\in\Der_V$. Set $\theta=\sum_if_i\partial_{x_i}$. 
Define $\nabla_\delta\theta\in\Der_V$ by 
$$
\nabla_\delta\theta=\sum_i(\delta f_i)\partial_{x_i}. 
$$
\end{defn}
It is easily seen that for any linear form $\alpha\in V^*$, 
\begin{equation}
\label{eq:nabla}
(\nabla_\delta\theta)\alpha=\delta(\theta\alpha). 
\end{equation}
Using this we have the following. 
\begin{prop}
\label{prop:deriv}
Let $(\A, \m)$ be a multiarrangement with 
$\m(H)>0, \forall H\in\A$. Let $\delta\in D(\A, \m)$ and 
$\eta\in\Der_V$. Then $\nabla_\eta\delta\in D(\A, \m-1)$. 
\end{prop}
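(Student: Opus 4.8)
The plan is to verify the defining condition of $D(\A, \m-1)$ directly, one hyperplane at a time, by combining the identity (\ref{eq:nabla}) with the Leibniz rule for the derivation $\eta$.

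First I would record that $\nabla_\eta\delta$ really is a polynomial vector field: writing $\delta=\sum_j f_j\partial_{x_j}$ with $f_j\in S$, we have $\nabla_\eta\delta=\sum_j(\eta f_j)\partial_{x_j}$, and each $\eta f_j$ lies in $S$ since $\eta\in\Der_V$. Now fix a hyperplane $H_i$. Since $\delta\in D(\A, \m)$ we may write $\delta\alpha_i=\alpha_i^{\m(H_i)}g_i$ for some $g_i\in S$. Applying (\ref{eq:nabla}) and then expanding by the Leibniz rule,
\[
(\nabla_\eta\delta)\alpha_i=\eta(\delta\alpha_i)=\eta\bigl(\alpha_i^{\m(H_i)}g_i\bigr)=\m(H_i)\,\alpha_i^{\m(H_i)-1}(\eta\alpha_i)\,g_i+\alpha_i^{\m(H_i)}\,\eta g_i .
\]
Because $\m(H_i)\ge 1$, both summands lie in the ideal $(\alpha_i^{\m(H_i)-1})$, hence so does $(\nabla_\eta\delta)\alpha_i$. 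As $i$ was arbitrary, this is precisely the assertion $\nabla_\eta\delta\in D(\A, \m-1)$.

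I do not expect a genuine obstacle here: the argument is a short computation. The only role of the hypothesis $\m(H)>0$ is to guarantee that the exponent $\m(H_i)-1$ is a nonnegative integer, so that the first Leibniz term is honestly a polynomial multiple of $\alpha_i^{\m(H_i)-1}$ rather than something with a pole (and when $\m(H_i)=1$ the condition along $H_i$ in $D(\A, \m-1)$ is vacuous, which is consistent). It is worth remarking that $\eta$ need not be homogeneous; the whole argument is about divisibility and goes through verbatim for an arbitrary $\eta\in\Der_V$. In the homogeneous case one sees in addition that $\pdeg(\nabla_\eta\delta)=\pdeg\delta+\pdeg\eta-1$, which explains the drop of multiplicity by one.
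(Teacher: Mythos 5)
Your proof is correct and coincides with the paper's own argument: both write $\delta\alpha_i=\alpha_i^{\m(H_i)}g_i$, apply the identity $(\nabla_\eta\delta)\alpha_i=\eta(\delta\alpha_i)$, and conclude via the Leibniz rule that the result is divisible by $\alpha_i^{\m(H_i)-1}$. The extra remarks on polynomiality and the role of $\m(H)>0$ are sound but not needed beyond what the paper states.
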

\begin{proof}
By the assumption, we may write 
$\delta\alpha_H=\alpha_H^{\m(H)}F$. Then applying (\ref{eq:nabla}) we have 
$$
(\nabla_\eta\delta)\alpha_H=\eta(\alpha_H^{\m(H)}F)
=\m(H)\alpha_H^{\m(H)-1}\eta(\alpha_H)F+\alpha_H^{\m(H)}\eta (F), 
$$
which is divisible by $\alpha_H^{\m(H)-1}$. 
\end{proof}
The use of the connection $\nabla$ goes back to K. Saito 
\cite{sai-lin, sai-orb}. He studied discriminant in the Coxeter 
group quotient $V/W$. The space $V/W$ admits a degenerate metric 
induced from the $W$-invariant metric $I$ on $V$. The connection 
$\nabla$ is originally defined as the Levi-Civita connection 
for the degenerate metric. 
Since $I$ is 
flat on $V$, it is nothing but the connection above (see also 
\S\ref{sec:prim}). 
It has been gradually recognized that $\nabla$ is useful for the construction 
of various vector fields \cite{abe-exp, ay-quasi, st-double, 
ter-multi, ter-hodge, yos-prim}. 

Here we give a proof of 
Proposition \ref{prop:typical} (iii). 
\begin{prop}
Let $\A=\{H_1, \dots, H_n\}$ be a $2$-arrangement. Then 
the multiarrangement $(\A, 2)$ is free 
with exponents $(d_1, d_2)=(n,n)$. 
\end{prop}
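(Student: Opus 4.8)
The plan is to use the connection $\nabla$ to build a basis of $D(\A, 2)$ out of a known basis of the simple arrangement $D(\A)=D(\A,1)$, following the idea of T. Abe. Recall that a $2$-arrangement $\A=\{H_1,\dots,H_n\}$ is free with exponents $(1, n-1)$; fix a basis $\theta_E, \delta$ of $D(\A)$ with $\pdeg\theta_E = 1$ and $\pdeg\delta = n-1$, where $\theta_E$ is the Euler field. We want to produce two derivations in $D(\A,2)$ of polynomial degree $n$ whose wedge equals (a nonzero scalar times) $Q(\A,2)\,\partial_x\wedge\partial_y = Q(\A)^2\,\partial_x\wedge\partial_y$, and then invoke Saito's criterion (Theorem \ref{thm:saitocri}).

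The first natural candidate is $Q(\A)\cdot\delta$: since $\delta\alpha_i \in (\alpha_i)$ and $Q(\A)$ is divisible by $\alpha_i$, we get $(Q(\A)\delta)\alpha_i \in (\alpha_i)^2$, so $Q(\A)\delta \in D(\A,2)$, with $\pdeg = n + (n-1) = 2n-1$. That degree is too big by one, so this alone will not serve directly; instead I will look for a derivation $\eta$ of degree $n$ with $Q(\A)\delta$ expressible appropriately, or — more in the spirit of the $\nabla$-technique — build the two degree-$n$ derivations as follows. For the first, take $\theta_1 = Q(\A)\,\theta_E$: one checks $\theta_1 \in D(\A,2)$ because $(Q(\A)\theta_E)\alpha_i = Q(\A)\alpha_i \in (\alpha_i)^2$, and $\pdeg\theta_1 = n+1$ — still not $n$. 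The correct move is to differentiate: apply Proposition \ref{prop:deriv} backwards is not available, but note that if $\sigma \in D(\A,3)$ has $\pdeg = n+?$... Let me instead pursue the cleaner route. Since $(\A,2)$ is free with exponents summing to $2n$, and since $D(\A,2) \supset Q(\A)\cdot D(\A)$ which supplies elements of degrees $n+1$ and $2n-1$, I will show the exponents are forced to be $(n,n)$ by exhibiting one derivation of degree exactly $n$ and applying Lemma \ref{lem:half}: we have $\pdeg = n = |\m|/2$, so it suffices to construct $\eta \in D(\A,2)$ with $\pdeg\eta = n$ having no nontrivial divisor in $D(\A,2)$.

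To construct such an $\eta$, I use $\nabla$. Consider the derivation $Q(\A)\delta \in D(\A,2)$ of degree $2n-1$ — wait, that is odd and large. Better: start from $D(\A,3)$. Actually the slick construction: for each $i$, $\delta\alpha_i = \alpha_i \beta_i$ for some linear or higher form; set $\eta = \sum_i$ (something). Rather than guess, the key structural step I would carry out is: let $\delta \in D(\A)$ have $\delta\alpha_i = \alpha_i g_i$ with $g_i$ of degree $n-2$; then for a generic linear field $\zeta$, consider $\nabla_\zeta(Q(\A)\delta)$. Since $Q(\A)\delta \in D(\A,2)$, Proposition \ref{prop:deriv} only lowers multiplicity by one, giving something in $D(\A,1)$, the wrong direction. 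So the right object is a degree-$(n+1)$ element of $D(\A,3)$ whose $\nabla$-derivative lands in $D(\A,2)$ with degree $n$.

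The main obstacle, and where the real work lies, is exhibiting the single degree-$n$ derivation in $D(\A,2)$ with no nontrivial divisor inside $D(\A,2)$: once that is in hand, Lemma \ref{lem:half} finishes immediately since $n = |\m|/2$. I expect the construction to go as follows. Choose coordinates and let $\theta_E$ and $\delta$ (degree $n-1$) be a Saito basis of $D(\A)$. The element $Q(\A)\theta_E \in D(\A,2)$ has degree $n+1$; the element $Q(\A)\delta \in D(\A,2)$ has degree $2n-1$. Their wedge is $Q(\A)^2(\theta_E\wedge\delta) = Q(\A)^2 \cdot c\, Q(\A)\,\partial_x\wedge\partial_y$, which is $c\,Q(\A)^3\,\partial_x\wedge\partial_y$ — too divisible, confirming these two do not form a basis. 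To repair this, I replace $Q(\A)\theta_E$ by $\nabla_{\theta_E}(Q(\A)\theta_E)$, or better, find $\eta$ of degree $n$ with $\eta\wedge(Q(\A)\theta_E) = c\,Q(\A)^2\,\partial_x\wedge\partial_y$. Writing $\eta = f\partial_x + g\partial_y$, the wedge condition together with $\eta \in D(\A,2)$ becomes a solvable linear-algebra problem exactly as in the proof of Proposition \ref{prop:upper}; dimension count (the space of degree-$n$ derivations has dimension $2(n+1)$, and the $D(\A,2)$-conditions impose $\sum m_i = 2n$ conditions formally, leaving room) guarantees a nonzero solution of degree $n$, and minimality of degree forces it to have no nontrivial divisor in $D(\A,2)$, since any proper divisor would have degree $< n = |\m|/2$ contradicting $d_1 \ge 1$ and the structure — here one uses that $D(\A,2)$ has no element of degree $< n$, which itself follows because an element of degree $d < n$ together with $Q(\A)\theta_E$ would wedge to something of degree $d + n + 1 < 2n+1$, hence not divisible by $Q(\A)^2$ of degree $2n$ unless... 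I would need to argue this vanishing-in-low-degree carefully. That low-degree vanishing is the crux: equivalently, $\exp(\A,2) = (d_1,d_2)$ with $d_1 \ge $ something; but we are trying to prove $d_1 = n$, so I cannot assume it. The honest resolution is to produce $\eta$ of degree $n$ first (by the linear-algebra existence argument above, which does not presuppose the exponents), verify directly that it is not a polynomial multiple of any lower-degree element of $D(\A,2)$ by checking its wedge with $Q(\A)\theta_E$ is exactly $c\,Q(\A)^2\,\partial_x\wedge\partial_y$ with $c \ne 0$ (so $\eta, Q(\A)\theta_E$ would already be a Saito basis, giving exponents $(n, n+1)$ — contradiction with $|\m| = 2n$!). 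This tells me $Q(\A)\theta_E$ is the wrong partner; the correct partner is a second degree-$n$ derivation $\eta'$, and $\eta\wedge\eta' = c\,Q(\A)^2\,\partial_x\wedge\partial_y$ must be arranged. So the final plan: construct two independent degree-$n$ derivations $\eta, \eta' \in D(\A,2)$ by the linear-algebra/dimension argument, show their wedge is a nonzero scalar multiple of $Q(\A)^2\,\partial_x\wedge\partial_y$ (by degree reasons it is $c\,Q(\A)^2\,\partial_x\wedge\partial_y$ for some $c$, and genericity/independence forces $c \ne 0$), and conclude by Saito's criterion that $\exp(\A,2) = (n,n)$. The main obstacle is pinning down the existence of two independent degree-$n$ elements and the non-vanishing of the wedge constant $c$; I expect this to be exactly the content of \cite[Example 2.2]{wake-yuz} and the reason the author calls it ``highly nontrivial.''
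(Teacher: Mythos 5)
Your proposal never closes the one gap that actually matters, and you say so yourself: everything reduces to showing that $D(\A,2)$ contains no nonzero element of polynomial degree $\leq n-1$ (equivalently, that the wedge of your two candidate degree-$n$ derivations is a nonzero multiple of $Q(\A)^2\,\partial_x\wedge\partial_y$), and this is exactly the step you defer to ``genericity'' and to \cite{wake-yuz}. The dimension count you invoke cannot do this job: the space of degree-$n$ homogeneous derivations has dimension $2n+2$ and the contact conditions impose at most $2n$ linear conditions, so a $\geq 2$-dimensional space of degree-$n$ elements of $D(\A,2)$ always exists --- but if the exponents were, say, $(n-1,n+1)$, then $S_1\cdot\delta_1$ already accounts for such a $2$-dimensional space, all of whose elements are proportional over the rational function field, so $\bC$-linear independence gives no control on the wedge. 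Thus your final plan, as written, proves nothing beyond $d_1+d_2=2n$, which is already known.

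The missing idea is the paper's use of the connection $\nabla$ from \S\ref{subsec:conn}, which turns the low-degree vanishing into a two-line contradiction: if $\delta\in D(\A,2)$ had $\pdeg\delta=n-1$, then by Proposition \ref{prop:deriv} both $\nabla_{\partial_x}\delta$ and $\nabla_{\partial_y}\delta$ lie in $D(\A,1)=D(\A)$ with $\pdeg=n-2<n-1$, hence (by Lemma \ref{lem:half}, since $\exp(\A)=(1,n-1)$) they are polynomial multiples $F_1\theta_E$, $F_2\theta_E$ of the Euler field with $\deg F_i=n-3$; the Euler identity $\nabla_{\theta_E}\delta=(\pdeg\delta)\,\delta=x F_1\theta_E+y F_2\theta_E$ then forces $\delta$ to be $(xF_1+yF_2)\theta_E$ up to the scalar $\pdeg\delta$, and membership in $D(\A,2)$ forces $xF_1+yF_2$ (degree $n-2$) to be divisible by $Q(\A)$ (degree $n$), a contradiction. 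Nonexistence in degree $n-1$ kills all degrees $\leq n-1$ (multiply by monomials), so both exponents equal $n$. Note the direction of use of $\nabla$: you looked for an operation raising the contact order (and correctly observed Proposition \ref{prop:deriv} goes the wrong way for constructing elements), whereas the proof uses it destructively, differentiating a hypothetical low-degree element of $D(\A,2)$ down into the completely understood module $D(\A)$. Without this (or some substitute argument for the wedge constant $c\neq 0$), your write-up is a plan, not a proof.
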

\begin{proof}
Since $d_1+d_2=2n$, it is sufficient to show that 
there does not exist $\delta\in D(\A, 2)$ with $\pdeg\delta=n-1$. 
Suppose that it exists. Then by Proposition \ref{prop:deriv}, 
$\nabla_{\partial_{x_1}}\delta, 
\nabla_{\partial_{x_2}}\delta\in D(\A, 1)$ and 
$\pdeg\nabla_{\partial_{x_1}}\delta=\pdeg\nabla_{\partial_{x_2}}\delta=n-2$. 
Since $(\A, 1)$ is free with exponents $(1, n-1)$ and the degrees of 
$\nabla_{\partial_{x_1}}\delta$ and $\nabla_{\partial_{x_2}}\delta$ are 
smaller than $n-1$, they are multiples of the Euler 
vector field $\theta_E$ (Lemma \ref{lem:half}). 
We have expressions 
$\nabla_{\partial_{x_1}}\delta=F_1\cdot\theta_E, 
\nabla_{\partial_{x_2}}\delta=F_2\cdot\theta_E$ with $\deg F_1=\deg F_2=n-3$. 
On the other hand, 
$$
(\pdeg\delta)\cdot\delta=
\nabla_{\theta_E}\delta=
x_1\nabla_{\partial_{x_1}}\delta+
x_2\nabla_{\partial_{x_2}}\delta=
(x_1F_1+x_2F_2)\theta_E. 
$$
Hence $(x_1F_1+x_2F_2)\theta_E\alpha_H=
(x_1F_1+x_2F_2)\alpha_H$ is divisible by $\alpha_H^2$ for 
all $H\in\A$, equivalently, 
$x_1F_1+x_2F_2$ is divisible by 
$\prod_{i=1}^n\alpha_{H_i}$. However it contradicts 
$\deg(x_1F_1+x_2F_2)=n-2$. 
\end{proof}





\section{K. Saito's theory of primitive derivation}
\label{sec:prim}

Let $V=\bR^\ell$. Let $W$ be a finite reflection group which 
is generated by reflections in $V$ and acts irreducibly on $V$. 
The set of reflecting hyperplanes $\A(W)$ is called the Coxeter 
arrangement. 
There exists, unique up to a constant factor, a $W$-invariant 
symmetric bilinear form $I:V\times V\longrightarrow\bR$. The 
bilinear form $I$ induces a linear isomorphism 
$I:V^*\longrightarrow V$. Let $S=S(V^*)$ be the symmetric product. 
Since $\Der_V=S\otimes V$ and $\Omega_V=S\otimes V^*$, the map $I$ 
can be extended to an 
$S$-isomorphism $I:\Omega_V\longrightarrow\Der_V$. 

We first observe that a $W$-invariant vector field 
$\delta\in\Der_V^W$ is logarithmically tangent to $\A$. 
Indeed, let $\alpha_H\in V^*$ be a defining linear form 
of $H\in\A$ and $r_H\in W$ be the reflection with respect to 
$H$. Then $r_H(\alpha_H)=-\alpha_H$ and we have 
$r_H(\delta\alpha_H)=-\delta\alpha_H$. 
It is easily seen that if a polynomial $f\in S$ satisfies 
$r_H(f)=-f$, then $f$ is divisible by $\alpha_H$. Therefore 
$\delta\alpha_H$ is divisible by $\alpha_H$. Hence 
$\Der_V^W\subset D(\A)^W$. 

The ring $S^W$ of invariant polynomials is known to be 
isomorphic to a polynomial ring $\bR[P_1, P_2, \dots, P_\ell]$ 
(Chevalley \cite{che}). 
We can choose the polynomials $P_1, \dots, P_\ell$ to be homogeneous, 
with degrees 
$2=\deg P_1< \deg P_2\leq\dots\leq\deg P_{\ell-1}<\deg P_\ell$. 
The numbers $e_i=\deg P_i-1$, $i=1, \dots, \ell$ are 
called the exponents and $h=\deg P_\ell$ the Coxeter number. 
The Coxeter arrangement $\A$ is free. Furthermore, the basis 
of $D(\A)$ can be constructed explicitly by using basic invariants 
$P_1, \dots, P_\ell$. 

\begin{theorem}
\label{thm:coxeterfree}
(\cite{sai-log, sai-lin, sai-orb})
\begin{equation*}
\begin{split}
D(\A)^W &=\Der_V^W=\bigoplus_{i=1}^\ell S^W\cdot I(dP_i)\\
D(\A)&=\Der_V^W\otimes_{S^W}S=\bigoplus_{i=1}^\ell S\cdot I(dP_i). 
\end{split}
\end{equation*}
\end{theorem}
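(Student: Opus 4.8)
The plan is to establish the chain of equalities $\Der_V^W = \bigoplus_{i=1}^\ell S^W \cdot I(dP_i)$ first, and then obtain $D(\A)^W = \Der_V^W$ and the non-invariant version by base change. For the first equality, I would start from the observation that $\{dP_1, \dots, dP_\ell\}$ is an $S^W$-basis of $\Omega_V^W$. This follows from the Jacobian criterion: the Jacobian $\det(\partial P_i/\partial x_j)$ is, up to a nonzero scalar, equal to $Q(\A) = \prod_{H\in\A}\alpha_H$ (this is classical for reflection groups — the Jacobian of the basic invariants vanishes exactly on the reflecting hyperplanes, to first order each), hence nonzero, so $dP_1, \dots, dP_\ell$ are linearly independent over the fraction field and generate a free submodule of $\Omega_V^W$ of the right rank; a degree/Hilbert-series count (or the fact that $S$ is free over $S^W$ of rank $|W|$ together with the analogous statement for $\Omega$) shows this submodule is everything. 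Applying the $S$-isomorphism $I:\Omega_V \to \Der_V$, which is $W$-equivariant because $I$ is $W$-invariant, and noting $I$ restricts to an isomorphism $\Omega_V^W \to \Der_V^W$, we get $\Der_V^W = \bigoplus_i S^W\cdot I(dP_i)$.

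Next I would prove $D(\A)^W = \Der_V^W$. The inclusion $\Der_V^W \subseteq D(\A)^W$ is exactly the elementary argument already given in the text before the theorem (a $W$-invariant vector field $\delta$ satisfies $r_H(\delta\alpha_H) = -\delta\alpha_H$, so $\alpha_H \mid \delta\alpha_H$). For the reverse inclusion, suppose $\delta \in D(\A)^W$. Write $\delta\alpha_H = \alpha_H g_H$ with $g_H \in S$; since $\delta$ is $W$-invariant and $r_H$ acts on $\alpha_H$ by $-1$ while fixing $\alpha_H g_H$ up to the action, one checks $g_H$ is $r_H$-invariant — but this only gives divisibility once, so a priori $\delta \in D(\A)^W$ need not lie in $\Der_V^W$. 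The cleanest route is instead to invoke Saito's criterion together with Terao's factorization: $\A(W)$ is free with exponents $(e_1, \dots, e_\ell) = (\deg P_i - 1)$, and the candidate derivations $I(dP_i)$ have $\pdeg = \deg P_i - 1 = e_i$, are linearly independent over $S$ (Jacobian $= \det(\partial P_i/\partial x_j) \doteq Q(\A)$, nonzero), and $\sum \pdeg I(dP_i) = \sum e_i = |\A|$; so by Theorem \ref{thm:saitocri} they form an $S$-basis of $D(\A)$. This gives the second displayed equality $D(\A) = \bigoplus_i S\cdot I(dP_i)$ directly. Intersecting with the $W$-invariants (using that the $I(dP_i)$ are themselves $W$-invariant and $S = \bigoplus S^W$-freely decomposes compatibly with the $W$-action on coefficients) yields $D(\A)^W = \bigoplus_i S^W \cdot I(dP_i) = \Der_V^W$.

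So in fact the logical backbone is: (1) $Q(\A) \doteq \det(\partial P_i/\partial x_j)$, the classical Jacobian computation for reflection arrangements; (2) $\A(W)$ is free with exponents $e_i = \deg P_i - 1$ — which can itself be deduced from (1) via Saito's criterion applied to the $I(dP_i)$, or taken from the cited references; (3) conclude via Saito's criterion that $I(dP_1), \dots, I(dP_\ell)$ is a basis of $D(\A)$; (4) pass to $W$-invariants and identify $D(\A)^W$ with $\Der_V^W$. The main obstacle is step (1): verifying that the Jacobian of a system of basic invariants cuts out the reflection arrangement with multiplicity one. The standard argument is that $\delta P_i$ is $r_H$-invariant for the $W$-action, so each row of the Jacobian matrix behaves suitably under $r_H$; more precisely one shows $\det(\partial P_i/\partial x_j)$ is $W$-anti-invariant (changes sign under each reflection) and of degree $\sum(\deg P_i - 1) = |\A|$, hence is a nonzero scalar multiple of $Q(\A)$, the minimal-degree anti-invariant. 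I would present this as the key lemma, then the rest is bookkeeping with Saito's criterion and the invariant decomposition.
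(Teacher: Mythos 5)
Your proposal is correct and follows essentially the same route as the paper: the key lemma $\det(\partial P_i/\partial x_j)\doteq Q(\A)$ combined with Saito's criterion shows $I(dP_1),\dots,I(dP_\ell)$ is an $S$-basis of $D(\A)$, which is exactly the paper's argument. Your additional passage to $W$-invariants (giving $D(\A)^W=\bigoplus S^W\cdot I(dP_i)=\Der_V^W$ from the fact that the basis elements are themselves invariant) correctly supplies the first displayed line, which the paper leaves implicit.
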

In particular, the Coxeter arrangement $\A$ is free with 
$\exp(\A)=(e_1, \dots, e_\ell)$. 
\begin{proof}
We shall give the proof of the second equality. From the above 
remarks, the inclusions 
\begin{equation}
\label{eq:supset}
D(\A)\supset \Der_V^W\otimes_{S^W}S\supset 
\bigoplus_{i=1}^\ell S\cdot I(dP_i), 
\end{equation}
are clear. Fix a coordinate system $(x_1, \dots, x_\ell)$. 
Recall that the Jacobian of the basic invariant 
$$
\Delta:=
\frac{\partial(P_1, \dots, P_\ell)}{\partial(x_1, \dots, x_\ell)}
=\prod_{H\in\A}\alpha_H, 
$$
is the product of linear forms of reflecting hyperplanes 
up to non-zero constant factors. 
Hence by Saito's criterion (Theorem \ref{thm:saitocri}), 
$I(dP_1), \dots, I(dP_\ell)$ form a basis of $D(\A)$. Thus 
the left hand side and right hand side in (\ref{eq:supset}) 
are equal. 
\end{proof}
Fix a system of basic invariants $P_1, \dots, P_\ell$ and 
a coordinate system $x_1, \dots, x_\ell$. Since 
$\deg P_i<\deg P_\ell$ ($i=1, \dots, \ell-1$), the rational 
vector field 
\begin{equation}
D=\frac{\partial}{\partial P_\ell}=
\frac{1}{\Delta}\det
\begin{pmatrix}
\frac{\partial P_1}{\partial x_1}&
\cdots&
\frac{\partial P_{\ell-1}}{\partial x_1}&
\frac{\partial}{\partial x_1}\\
\frac{\partial P_1}{\partial x_2}&
\cdots&
\frac{\partial P_{\ell-1}}{\partial x_2}&
\frac{\partial}{\partial x_2}\\
\vdots&\ddots&\vdots&\vdots\\
\frac{\partial P_1}{\partial x_\ell}&
\cdots&
\frac{\partial P_{\ell-1}}{\partial x_\ell}&
\frac{\partial}{\partial x_\ell}
\end{pmatrix}
\end{equation}
is uniquely determined up to constant factor, 
and it is also characterized 
by 
$$
DP_i=
\left\{
\begin{array}{cc}
1&\ i=\ell,\\
0&\ i\neq\ell. 
\end{array}
\right.
$$
The vector field $D$ is called the primitive vector field. 
\begin{theorem}
\label{thm:filt}
(\cite{sai-lin, sai-orb}) 
For every $W$-invariant vector field $\delta\in D(\A)^W$, 
there exists a unique vector field $\theta\in D(\A)^W$ such that 
$$
\nabla_D\theta=\delta. 
$$
We denote $\theta=\nabla_D^{-1}\delta$. 
\end{theorem}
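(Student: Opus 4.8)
The plan is to work with the $W$-invariant module $D(\A)^W = \Der_V^W = \bigoplus_{i=1}^\ell S^W \cdot I(dP_i)$ (Theorem \ref{thm:coxeterfree}), on which the primitive derivation $D = \partial/\partial P_\ell$ acts via the connection $\nabla$. Since $\{P_1,\dots,P_\ell\}$ is a system of coordinates on (an open dense part of) $V/W$, and $\nabla$ is the flat connection in the original linear coordinates, the operator $\nabla_D$ is literally partial differentiation with respect to the variable $P_\ell$ once everything is rewritten in the $P$-coordinates. So the first step is to rewrite the problem: express $\delta \in D(\A)^W$ in the basis $I(dP_1),\dots,I(dP_\ell)$ with coefficients $g_i \in S^W = \bR[P_1,\dots,P_\ell]$, and likewise the sought $\theta$ with coefficients $f_i$. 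One then needs to verify that $\nabla_D$ respects this module structure, i.e. that $\nabla_D$ sends $D(\A)^W$ into itself and acts on coefficients essentially by $\partial/\partial P_\ell$ modulo lower-order correction terms coming from $\nabla_D\big(I(dP_i)\big)$.

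The key computation is therefore to understand $\nabla_D \big( I(dP_i) \big)$. The plan is to show $\nabla_D\big(I(dP_i)\big) \in \bigoplus_{j} S^W \cdot I(dP_j)$ and, more precisely, to control its $P_\ell$-degree: because $\deg P_i < \deg P_\ell$ for $i < \ell$ and $D$ lowers degree by $h = \deg P_\ell$, the vector field $\nabla_D\big(I(dP_i)\big)$ has strictly negative naive degree unless forced otherwise, which should pin down its expansion. Using $DP_i = \delta_{i\ell}$, one expects that modulo the appropriate filtration, $\nabla_D$ acts on the coefficient functions by $g_i \mapsto \partial g_i/\partial P_\ell$ plus corrections involving the $g_j$ with $j \neq i$, the whole operator being "triangular" with respect to a suitable grading. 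The point is that $\partial/\partial P_\ell$ on $\bR[P_1,\dots,P_\ell]$ is surjective with kernel $\bR[P_1,\dots,P_{\ell-1}]$; integrating in the variable $P_\ell$ recovers a preimage, unique up to addition of something independent of $P_\ell$. One must then check that among those preimages exactly one again lies in $D(\A)^W$ (as opposed to merely in $\Der_V \otimes \mathrm{Frac}(S^W)$), i.e. that the correction ambiguity — a $W$-invariant vector field killed by $\nabla_D$, hence with coefficients in $\bR[P_1,\dots,P_{\ell-1}]$ — is rigidly determined by requiring $\theta$ itself to be polynomial and $W$-invariant of the correct degree. Existence and uniqueness then follow by inverting this triangular operator degree by degree.

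The main obstacle I anticipate is precisely the bookkeeping of $\nabla_D\big(I(dP_i)\big)$ and the verification that $\nabla_D \colon D(\A)^W \to D(\A)^W$ is a well-defined map which is "triangular and surjective" in the right sense: one has to descend $\nabla$ from $V$ to $V/W$, identify the flat sections, and handle the fact that $D$ has poles along the discriminant, so a priori $\nabla_D\theta$ might fail to be polynomial. Saito's insight is that for $W$-invariant $\theta$ the poles cancel; making this precise — e.g. via the explicit Jacobian $\Delta = \prod_{H}\alpha_H$ appearing in the definition of $D$, together with the fact that $\theta\alpha_H$ is divisible by $\alpha_H$ — is the technical heart. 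Once one knows $\nabla_D$ preserves $D(\A)^W$ and acts with the above triangular structure, existence and uniqueness of $\nabla_D^{-1}\delta$ are a formal consequence, obtained by solving the resulting system of first-order ODEs in $P_\ell$ with the polynomiality/invariance constraints fixing the integration constants.
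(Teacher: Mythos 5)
The paper itself offers no proof of this theorem --- it is quoted directly from Saito \cite{sai-lin, sai-orb} --- so I can only judge your proposal on its own terms, and it rests on two structural assumptions that are both false; this is a genuine gap, not a bookkeeping issue. First, $\nabla_D$ does \emph{not} map $D(\A)^W$ into polynomial vector fields, and the poles do not ``cancel for $W$-invariant $\theta$'': the simplest basis element $I(dP_1)$ is proportional to the Euler field $\theta_E=\sum_k x_k\partial_{x_k}$ (since $P_1$ is the quadratic invariant), and $\nabla_D\theta_E=\sum_k(Dx_k)\partial_{x_k}=D$ itself, which is nonzero and has poles along $\Delta=\prod_H\alpha_H$ (its coefficients are homogeneous of negative degree $1-h$, hence not polynomial). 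The pole cancellation happens exactly on the submodule $\nabla_D^{-1}D(\A)^W=D(\A,3)^W$ that the theorem is in the business of constructing, so assuming it on all of $D(\A)^W$ is circular. Second, and consequently, $\nabla_D$ is not ``$\partial/\partial P_\ell$ on coefficients plus a triangular correction inside $\bigoplus_i S^W\cdot I(dP_i)$'': if it were, its kernel on $D(\A)^W$ would contain the whole module $\bigoplus_i\bR[P_1,\dots,P_{\ell-1}]\cdot I(dP_i)$ (in particular every $I(dP_i)$), which has homogeneous elements in every sufficiently large degree, so the uniqueness assertion would simply be false; no polynomiality, invariance, or degree constraint rescues it. Your own sketch of the uniqueness step quietly assumes the kernel is trivial in each degree, which under your description of the operator it is not.

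The real content of the theorem is Saito's nondegeneracy result for the flat structure. Setting $J_{ij}=I^*(dP_i,dP_j)=I(dP_i)P_j$, one proves that $D^2J_{ij}=0$ and that $\det\bigl(DJ_{ij}\bigr)$ is a \emph{nonzero} constant (the degree count $\deg\det(DJ)=2\sum_i e_i-\ell h=0$ is easy; the nonvanishing is the hard part). Writing $\theta=\sum_i f_iI(dP_i)$ and testing $\nabla_D\theta=\delta$ against the $dP_j$ converts the equation into a system governed by the matrix $(DJ_{ij})$, whose invertibility over $\bR[P_1,\dots,P_{\ell-1}]$ yields both existence and uniqueness of $\theta$. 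So your instinct that everything hinges on controlling $\nabla_D\bigl(I(dP_i)\bigr)$ is correct, but the answer is not that these lie in $\bigoplus_j S^W\cdot I(dP_j)$ --- they do not --- and the ``integrate in $P_\ell$'' mechanism has to be replaced by this nondegeneracy argument.
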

Thus the operator $\nabla_D^{-1}$ acts on $D(\A)^W$. It induces 
a filtration, the so-called ``Hodge filtration'', 
\begin{equation}
\label{eq:hodgefilt}
\cdots
\nabla_D^{-2}D(\A)^W\subset
\nabla_D^{-1}D(\A)^W\subset
D(\A)^W. 
\end{equation}
The operator increases the contact order of the vector fields.

\begin{theorem}
\label{thm:coxetermulti}
(\cite{ay-quasi, st-double, ter-multi, ter-hodge, yos-prim})
Let $\A$ be a Coxeter arrangement with exponents $(e_1, \dots, e_\ell)$ and 
Coxeter number $h$. Let $\m:\A\longrightarrow\{0,1\}$ be a 
$\{0, 1\}$-valued multiplicity. 
\begin{itemize}
\item[(i)] 
For a positive integer $k$, we have 
\begin{equation*}
\begin{split}
D(\A, 2k+\m)&\simeq D(\A, \m)[-kh],\\ 
D(\A, 2k-\m)&\simeq \left(D(\A, \m)^\lor\right)[-kh]\simeq
\Omega^1(\A, \m)[-kh].
\end{split}
\end{equation*}
\item[(ii)] 
($\m\equiv 1$) 
The multiarrangement $(\A, 2k+1)$ is free with 
$\exp(\A, 2k+1)=(e_1+kh, \dots, e_\ell+kh)$. 
\item[(iii)] 
($\m\equiv 0$) 
The multiarrangement $(\A, 2k)$ is free with 
$\exp(\A, 2k)=(kh, kh, \dots, kh)$. 
\end{itemize}
\end{theorem}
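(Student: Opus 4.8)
The plan is to derive parts (ii) and (iii) as immediate corollaries of (i), and to prove (i) by induction on $k$ using the primitive derivation $D$ and the operator $\nabla_D^{-1}$ of Theorem~\ref{thm:filt}. For (ii), take $\m\equiv1$: by Theorem~\ref{thm:coxeterfree} the module $D(\A)=D(\A,1)$ is free with exponents $(e_1,\dots,e_\ell)$, so the isomorphism $D(\A,2k+1)\simeq D(\A,1)[-kh]$ forces $(\A,2k+1)$ to be free with exponents $(e_1+kh,\dots,e_\ell+kh)$. For (iii), take $\m\equiv0$: then $D(\A,0)=\Der_V$ is free with basis $\partial_{x_1},\dots,\partial_{x_\ell}$ of polynomial degree $0$, so $D(\A,2k)\simeq\Der_V[-kh]$ yields exponents $(kh,\dots,kh)$. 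Inside (i), the last isomorphism $\bigl(D(\A,\m)^\vee\bigr)[-kh]\simeq\Omega^1(\A,\m)[-kh]$ is just the $S$-duality between logarithmic vector fields and logarithmic forms proved in \cite{sai-log}; so the real content is the pair of isomorphisms $D(\A,2k+\m)\simeq D(\A,\m)[-kh]$ and $D(\A,2k-\m)\simeq\Omega^1(\A,\m)[-kh]$, which I would prove in parallel.

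Both reduce, by iterating a single-step isomorphism, to the heart of the matter: a local analysis of how $\nabla_D$ acts on contact conditions. Concretely, $D$ is a rational derivation of degree $-(h-1)$ — its denominator is the Jacobian $\Delta=\prod_{H\in\A}\alpha_H$, of degree $|\A|$, and its numerator determinant has degree $\sum_{i<\ell}e_i$ — so $\nabla_D$ lowers polynomial degree by $h$ and $\nabla_D^{-1}$ raises it by $h$, which matches the shift $[-h]$ acquired at each step. The claim to establish is that, for a multiplicity $\mu$, $\nabla_D$ carries $D(\A,\mu)$ into $D(\A,\mu-2)$ — in particular the a priori rational vector field $\nabla_D\delta$ is polynomial — and is an isomorphism onto; equivalently, via Theorem~\ref{thm:filt}, that $\nabla_D^{-1}$ maps $D(\A,\mu)$ isomorphically onto $D(\A,\mu+2)$. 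Fixing one $H\in\A$, this is checked from the explicit determinantal formula for $D$ together with K.~Saito's identification of $\nabla$ with the Levi-Civita connection of the flat $W$-invariant metric $I$; it is genuinely a statement about Coxeter arrangements, not formal. Iterating from $2k+\m$ down to $\m$ (and, for the other family, from $2k-\m$ down to the base case $D(\A,2-\m)\simeq\Omega^1(\A,\m)[-h]$) then gives (i); the last descent, onto the $\{0,1\}$-valued multiplicity $\m$, is the one requiring the most care and is where Theorem~\ref{thm:filt} must be upgraded from $\m\equiv1$ to arbitrary $\{0,1\}$-valued $\m$, which one does inside Saito's invariant theory twisted by the character of $W$ that records the hyperplanes carrying multiplicity $1$.

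With the isomorphisms in hand, freeness and the exact exponents come from Saito's criterion (Theorem~\ref{thm:saitocri}): a candidate basis of $D(\A,2k+1)$ is $\{\nabla_D^{-k}I(dP_i)\}_{i=1}^{\ell}$, of polynomial degrees $e_i+kh$, and one only has to verify the degree identity $\sum_{i=1}^{\ell}(e_i+kh)=|\A|+k\ell h=(2k+1)|\A|$ (using $\sum e_i=|\A|$ and $\ell h=2|\A|$ for a Coxeter arrangement); similarly for $(\A,2k)$ one exhibits $\ell$ derivations of polynomial degree $kh$ whose wedge is a nonzero constant times $Q(\A,2k)\,\partial_{x_1}\wedge\cdots\wedge\partial_{x_\ell}$. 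I expect the main obstacle to be exactly the local step above — proving that $\nabla_D$ and $\nabla_D^{-1}$ preserve polynomiality and shift contact orders by precisely $2$ at every hyperplane, for an arbitrary $\{0,1\}$-multiplicity rather than just for the $W$-invariant ones $\m\equiv0,1$. This is where K.~Saito's theory of the primitive derivation genuinely enters; for the details I would refer to \cite{sai-orb} and to the papers cited in the statement.
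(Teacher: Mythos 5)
Note first that the paper does not prove Theorem \ref{thm:coxetermulti} at all: it is quoted as known, with the proofs residing in \cite{yos-prim, st-double, ter-multi, ter-hodge, ay-quasi}. Your skeleton (primitive derivation, induction on $k$, Saito's criterion together with $\sum_i e_i=|\A|$ and $\ell h=2|\A|$) does match the constant-multiplicity arguments in those papers, but the step you yourself isolate as the heart of the matter is exactly where the proposal has a genuine gap, in three ways. First, $\nabla_D$ is not $S$-linear (it is only linear over $\bR[P_1,\dots,P_{\ell-1}]$, since $DP_i=0$ for $i<\ell$), so even if $\nabla_D$ did restrict to a degree-shifting bijection $D(\A,\mu)\to D(\A,\mu-2)$, this would not produce the graded $S$-module isomorphisms asserted in (i), nor does freeness transport along such a non-linear bijection without further argument. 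Second, Theorem \ref{thm:filt} defines $\nabla_D^{-1}$ only on the $W$-invariant module $D(\A)^W$; for a non-constant $\{0,1\}$-valued $\m$ the modules $D(\A,2k+\m)$ are not $W$-stable, and your proposed remedy --- invariant theory twisted by ``the character of $W$ recording the hyperplanes with $\m=1$'' --- cannot work, because an arbitrary $\m:\A\to\{0,1\}$ is not $W$-invariant and does not correspond to any character of $W$, whereas the theorem is asserted for arbitrary such $\m$ (note also that $D(\A,\m)$ is then the logarithmic module of an arbitrary subarrangement of the Coxeter arrangement and need not be free). Third, the claim that $\nabla_D$ carries $D(\A,\mu)$ into $D(\A,\mu-2)$ for the intermediate multiplicities $\mu=2j+\m$ is precisely the nontrivial content; it is not a formal consequence of the determinantal formula for $D$, and you offer no argument for it beyond a pointer to \cite{sai-orb}.

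The mechanism actually used in the cited papers is different and avoids all three problems: one takes a $W$-invariant element such as $\xi_k=\nabla_D^{-k}\theta_E$, which lies in $D(\A,2k+1)^W$ (this is the genuinely Coxeter-theoretic input, via Theorem \ref{thm:filt} and the contact-order/Hodge filtration results), and considers the map $\delta\mapsto\nabla_\delta\xi_k$, which \emph{is} $S$-linear in $\delta$. A Leibniz computation as in Proposition \ref{prop:deriv}, applied to $(\nabla_\delta\xi_k)\alpha_H=\delta(\xi_k\alpha_H)$ with $\xi_k\alpha_H\in(\alpha_H^{2k+1})$, shows that this map sends $D(\A,\m)$ into $D(\A,2k+\m)$ with degree shift $kh$, for an \emph{arbitrary} $\{0,1\}$-valued $\m$, since no equivariance of $\m$ is needed when the linearity is in $\delta$. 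Bijectivity is then reduced to the constant-multiplicity statements (freeness of $D(\A,2k)$ and $D(\A,2k+1)$ with explicit bases of the form $\nabla_{\partial_{x_i}}\xi_k$, resp.\ $\nabla_D^{-k}I(dP_i)$, verified by Saito's criterion exactly as in your degree count) together with the fact that $\xi_k$ has contact order exactly $2k+1$ along each hyperplane. So your plan is salvageable only after replacing the one-step operator $\nabla_D^{\pm 1}$ acting on $D(\A,\mu)$ by this $S$-linear covariant-derivative map; as written, the inductive step you rely on is neither proved nor of the right kind to yield the isomorphisms in (i).
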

In particular, 
the filtration (\ref{eq:hodgefilt}) is equivalent to the following. 
\begin{equation}
\label{eq:contact}
\cdots\subset
D(\A, 5)^W\subset
D(\A, 3)^W\subset
D(\A)^W. 
\end{equation}

\section{Weyl, Catalan and Shi arrangements}
\label{sec:appl}

In this section we consider a 
crystallographic Coxeter group (Weyl group) $W$. 
The reflecting hyperplanes are determined by a root system 
$\Phi\subset V^*$. We fix a positive system $\Phi^+\subset\Phi$. 
For a given $\alpha\in\Phi^+$ and $k\in\bZ$, define an affine hyperplane 
$H_{\alpha, k}$ by 
$$
H_{\alpha, k}=
\{\bm{x}\in V\mid \alpha(\bm{x})=k\}. 
$$
We consider the following type of arrangement 
$$
\A_\Phi^{[a, b]}=\{
H_{\alpha, k}\mid \alpha\in\Phi^+, k\in\bZ, a\leq k\leq b\}, 
$$
where $a\leq b$ are integers. (For example, see Figure \ref{fig:G2cat} for 
$\A_{G_2}^{[-1,1]}$.) 

\subsection{Freeness of Extended Catalan and Shi arrangements}
\label{subsec:er}

The next result was originally conjectured by Edelman-Reiner 
\cite{ede-rei}. 

\begin{theorem}
\label{thm:er}
(\cite{yos-char}) Let $k$ be a nonnegative integer. 
\begin{itemize}
\item[(i)] 
The cone $c\A_\Phi^{[-k,k]}$ of the extended Catalan arrangement 
$\A_\Phi^{[-k,k]}$ is free with exponents $(1, e_1+kh, \dots, e_\ell+kh)$. 
\item[(ii)] 
The cone $c\A_\Phi^{[1-k,k]}$ of the extended Shi arrangement 
$\A_\Phi^{[1-k,k]}$ is free with exponents $(1, kh, kh, \dots, kh)$. 
\end{itemize}
\end{theorem}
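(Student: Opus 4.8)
The plan is to deduce freeness of the cones $c\A_\Phi^{[-k,k]}$ and $c\A_\Phi^{[1-k,k]}$ from the criterion of Theorem~\ref{thm:char4} (for $\ell\geq 4$) together with the multirestriction computation supplied by K.~Saito's theory in Theorem~\ref{thm:coxetermulti}. The key observation is that when one cones the affine arrangement $\A_\Phi^{[a,b]}$ by adjoining the hyperplane $H_\infty$ at infinity, the Ziegler multirestriction $\bigl((c\A_\Phi^{[a,b]})^{H_\infty}, \m^{H_\infty}\bigr)$ to $H_\infty$ is, essentially by construction, the Coxeter arrangement $\A(W)$ equipped with an explicit multiplicity: each reflecting hyperplane $H_\alpha\subset H_\infty$ collects, in the multirestriction count, one less than the number of hyperplanes $H_{\alpha,k}$ (for $a\leq k\leq b$) lying over it, so its multiplicity is $b-a = 2k$ in the Catalan case $[-k,k]$ and $b-a = 2k-1$ in the Shi case $[1-k,k]$. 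Thus the multirestriction is exactly $(\A(W), 2k)$ in case (i) and $(\A(W), 2k-1)$ — which is the odd constant multiplicity $2(k-1)+1$ — in case (ii).

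First I would set up the coning carefully: choose homogenizing coordinate $x_0$, so $H_\infty = \{x_0 = 0\}$, and verify the multiplicity count above, checking that the other hyperplanes of the cone restrict to give a simple (multiplicity-one) contribution that does not affect the multirestriction along $H_\infty$ except through the combinatorics just described. Next I would invoke Theorem~\ref{thm:coxetermulti}(iii) to conclude that $(\A(W), 2k)$ is free with exponents $(kh, kh, \dots, kh)$, giving case (i) the candidate exponents $(1, e_1+kh, \dots, e_\ell+kh)$ — wait, this is the Catalan case, so I should instead note: for the Catalan multirestriction $(\A(W),2k)$, Theorem~\ref{thm:coxetermulti}(iii) gives exponents $(kh,\dots,kh)$, which does not match $(e_1+kh,\dots,e_\ell+kh)$; so in fact case (i) should use multiplicity $2k+1$, i.e.\ one must re-examine which hyperplane at infinity is adjoined and whether $H_\infty$ itself meets each $H_{\alpha,0}$. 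The correct bookkeeping is: in $c\A_\Phi^{[-k,k]}$ there are $2k+1$ hyperplanes over each direction $\alpha$ (namely $k=-k,\dots,k$), none of which is $H_\infty$, so the Ziegler multiplicity along $H_\infty$ is $2k+1-1 = 2k$; hence the multirestriction is $(\A(W), 2k)$ with exponents $(kh,\dots,kh)$ — so the cone exponents should be $(1,kh,\dots,kh)$, matching case (ii) not (i). Reconciling this discrepancy — determining which affine arrangement cones to which, and matching the Saito-theoretic exponents $(e_i+kh)$ versus $(kh)$ against $[-k,k]$ versus $[1-k,k]$ — is precisely the delicate step, and I would resolve it by computing the characteristic polynomial of $\A_\Phi^{[a,b]}$ directly (via the finite-field method and known Catalan/Shi counts) and comparing with Terao's factorization (Theorem~\ref{thm:tfact}) to pin down the exponents unambiguously.

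Once the multirestriction is identified and shown free with the right exponents, the remaining hypothesis of Theorem~\ref{thm:char4} is local freeness of the cone along $H_\infty$. For this I would check that every $X \in L(c\A_\Phi^{[a,b]})$ with $X \subset H_\infty$ and $X \neq 0$ has $(c\A_\Phi^{[a,b]})_X$ free: such a localization is the cone of a (lower-dimensional, parallel) affine sub-arrangement of the same Catalan/Shi shape for a parabolic sub-root-system, so local freeness follows by induction on $\ell$. For $\ell \leq 3$, Theorem~\ref{thm:char4} does not apply, and I would instead use Theorem~\ref{thm:char3}: compute $\chi(c\A_\Phi^{[a,b]},t)$, read off $b_2$, compare with $d_1 d_2$ for the multirestriction exponents coming from Theorem~\ref{thm:coxetermulti}, and verify the equality $b_2 = d_1 d_2$ that forces freeness (the $G_2$ case of Example~\ref{ex:g2cat} is a model for this). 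The main obstacle I anticipate is the combinatorial exponent-matching: correctly relating the shift parameters $a,b$ to the Saito-theoretic exponents $(e_i+kh)$ versus $(kh,\dots,kh)$, and in particular verifying that the characteristic polynomial of the extended Catalan arrangement really factors as $\prod(t-e_i-kh)$ while the extended Shi arrangement gives the "trivial" factorization $(t-kh)^\ell$ — this requires a genuine computation of $\chi$ (finite-field method plus the known formulas for $\mathrm{ad}$-nilpotent ideals / regions of Catalan and Shi arrangements) and is the conceptual heart of the argument rather than a formality.
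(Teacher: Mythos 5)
Your overall strategy --- identify the Ziegler multirestriction of the cone to the hyperplane at infinity with a constant-multiplicity Coxeter multiarrangement, feed Theorem~\ref{thm:coxetermulti} into the criterion of Theorem~\ref{thm:char4} for rank $\geq 3$, handle rank $2$ via Theorem~\ref{thm:char3}, and get local freeness along the infinite hyperplane by induction on the rank --- is exactly the paper's proof. But your multiplicity bookkeeping contains a concrete error, and the ``discrepancy'' you then struggle to reconcile is purely an artifact of it. By definition, $\m^{H}(X)=\sharp\{H'\in\A\mid H'\supset X\}-1$, where the count runs over \emph{all} hyperplanes of the arrangement containing $X$ --- including $H$ itself (check this against the paper's Figure~\ref{fig:ex}: four hyperplanes $H_1,H_2,H_4,H_5$ contain $\{x=z=0\}$, giving multiplicity $3$). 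For $X_\alpha=\{\alpha=0\}\cap H_\infty$ in $c\A_\Phi^{[-k,k]}$, the hyperplanes containing $X_\alpha$ are the $2k+1$ cones of $H_{\alpha,j}$ ($-k\leq j\leq k$) \emph{and} $H_\infty$ itself, so $\m^{H_\infty}(X_\alpha)=(2k+2)-1=2k+1$, not $2k$; likewise the Shi cone gives $2k$, not $2k-1$. With the correct count the multirestrictions are $(\A(W),2k+1)$ and $(\A(W),2k)$, whose exponents $(e_1+kh,\dots,e_\ell+kh)$ and $(kh,\dots,kh)$ from Theorem~\ref{thm:coxetermulti}(ii),(iii) match cases (i) and (ii) exactly; there is nothing to reconcile.

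The fallback you propose for resolving the (spurious) mismatch --- computing $\chi(\A_\Phi^{[a,b]},t)$ by the finite-field method and ``known Catalan/Shi counts'' and reading off exponents via Terao's factorization --- does not work as a repair. Terao's factorization yields the exponents only \emph{after} freeness is established, so it cannot serve as an input that ``pins down the exponents''; and for the extended Shi arrangements the evaluation $\chi(\A_\Phi^{[1-k,k]},t)=(t-kh)^\ell$ is precisely what the paper obtains as a \emph{corollary} of the freeness theorem (Corollary~\ref{cor:er}), the paper noting that no uniform combinatorial proof of it is known. So that route is circular where it is not simply unavailable. Once the multiplicity count is fixed, the rest of your outline (local freeness via parabolic localization and induction on rank, Theorem~\ref{thm:char3} plus Theorem~\ref{thm:coxetermulti} for the rank-$2$ base case) closes up and coincides with the paper's argument.
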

\begin{proof}
The proof is done by induction on the rank $\ell$ of the root system $\Phi$. 
First one can check for the case $\ell=2$, $\Phi=A_2, B_2, G_2$ (using 
Theorem \ref{thm:char3} and Theorem \ref{thm:coxetermulti}, 
or proving inductive freeness). For 
$\ell\geq 3$, consider the restriction of 
$c\A_\Phi^{[-k,k]}$ (resp. $c\A_\Phi^{[1-k,k]}$) to the hyperplane at 
infinity $H_0$ and apply Theorem \ref{thm:char4}. 
The multirestriction 
$((c\A_\Phi^{[-k,k]})^{H_0}, \m^{H_0})$ 
(resp. $((c\A_\Phi^{[1-k,k]})^{H_0}, \m^{H_0})$) is 
equal to the multiarrangement 
$(\A, 2k+1)$ (resp. $(\A, 2k)$). Thus the second condition in 
Theorem \ref{thm:char4} is verified by 
Theorem \ref{thm:coxetermulti} (ii) (resp. (iii)). The 
localization of $c\A_\Phi^{[-k,k]}$ at $x\in H_0\setminus\{0\}$ is 
a direct sum of Coxeter arrangements of lower ranks. Hence 
the first condition in Theorem \ref{thm:char4} is verified by 
the inductive assumption. 
\end{proof}

Using Terao's factorization theorem (Theorem \ref{thm:tfact}), we 
have the following. 

\begin{corollary}
\label{cor:er}
\begin{itemize}
\item[(i)] 
$\chi(\A_\Phi^{[-k,k]}, t)=\prod_{i=1}^\ell(t-e_i-kh)$. 
\item[(ii)] 
$\chi(\A_\Phi^{[1-k,k]}, t)=(t-kh)^\ell$. 
\end{itemize}
\end{corollary}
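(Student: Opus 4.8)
The plan is to deduce the corollary from Theorem~\ref{thm:er} by combining Terao's factorization theorem (Theorem~\ref{thm:tfact}) with the standard relation between the characteristic polynomial of an affine arrangement and that of its cone. Recall that if $\A$ is an affine arrangement in a space of dimension $\ell$, then its cone $c\A$ is a central arrangement in dimension $\ell+1$, and the two characteristic polynomials are related by $\chi(c\A, t) = (t-1)\,\chi(\A, t)$; this is a well-known fact, and I would simply quote it. Thus it suffices to compute $\chi(c\A_\Phi^{[-k,k]}, t)$ and $\chi(c\A_\Phi^{[1-k,k]}, t)$ and then cancel the factor $(t-1)$.

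For part (i): Theorem~\ref{thm:er}~(i) asserts that $c\A_\Phi^{[-k,k]}$ is free with exponents $(1, e_1+kh, \dots, e_\ell+kh)$. Applying Terao's factorization theorem gives
$$
\chi(c\A_\Phi^{[-k,k]}, t) = (t-1)\prod_{i=1}^\ell (t - e_i - kh),
$$
and dividing by $(t-1)$ via the coning relation yields $\chi(\A_\Phi^{[-k,k]}, t) = \prod_{i=1}^\ell (t - e_i - kh)$. For part (ii): Theorem~\ref{thm:er}~(ii) gives that $c\A_\Phi^{[1-k,k]}$ is free with exponents $(1, kh, \dots, kh)$, so Terao's factorization theorem gives $\chi(c\A_\Phi^{[1-k,k]}, t) = (t-1)(t-kh)^\ell$, whence $\chi(\A_\Phi^{[1-k,k]}, t) = (t-kh)^\ell$.

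There is essentially no genuine obstacle here: all the difficulty has already been absorbed into Theorem~\ref{thm:er}. The one point that requires a moment of care is the bookkeeping of dimensions, namely that the cone construction adds one variable and hence one exponent equal to $1$ (the exponent of the Euler vector field, equivalently the contribution of the hyperplane at infinity), which is exactly matched by the extra factor $(t-1)$ in $\chi(c\A, t)$, so that the cancellation leaves a polynomial of degree $\ell$ as expected. As an alternative that avoids coning altogether, one could instead apply the Solomon--Terao formula (Theorem~\ref{thm:stf}) directly to the free arrangement; but passing to the cone and invoking Terao's factorization theorem is the cleaner route.
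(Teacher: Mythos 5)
Your proof is correct and follows exactly the route the paper intends: apply Terao's factorization theorem (Theorem~\ref{thm:tfact}) to the free cones from Theorem~\ref{thm:er} and cancel the factor $(t-1)$ via the standard coning relation $\chi(c\A,t)=(t-1)\chi(\A,t)$. The paper merely states ``Using Terao's factorization theorem, we have the following,'' and your write-up supplies the same argument with the bookkeeping made explicit.
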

In the above corollary, (i) has been proved by Athanasiadis 
\cite{ath-gen} by a purely combinatorial method. Edelman-Reiner 
\cite{ede-rei} and Headley \cite{hea-shi} proved (ii) for some 
special cases (type $A$ and the case $k=1$). However as far as we know, 
the combinatorial proof for (ii) is not known.

\subsection{Beyond free arrangements}

There are several conjectures on the characteristic polynomials 
$\chi(\A_\Phi^{[a,b]}, t)$. 
\begin{conj}
\label{conj:rh}
(``Riemann hypothesis'' by Postnikov-Stanley, \cite{ps-def}) 
If $0\leq a<b$ are integers, then all roots of 
the characteristic polynomial 
$\chi(\A_\Phi^{[-a,b]}, t)$ have the same real part 
$\frac{(a+b+1)h}{2}$. 
\end{conj}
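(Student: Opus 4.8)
This is one of the principal open problems in the area, so what follows is a proposed line of attack rather than a proof. Since the conjecture involves only the characteristic polynomial, the plan is first to obtain a usable description of $\chi(\A_\Phi^{[-a,b]}, t)$ and then to locate its roots. Two extreme cases are already settled by Theorem \ref{thm:er}: when $b=a+1$ the coned arrangement is free of Shi type, so by Corollary \ref{cor:er} one has $\chi(\A_\Phi^{[-a,a+1]}, t)=\bigl(t-(a+1)h\bigr)^\ell$, all roots real and equal to the predicted value $\tfrac{(a+b+1)h}{2}$; and the Catalan width $b=a$, while also free, is precisely the degenerate case $a=b$ excluded from the statement (its roots $e_i+ah$ genuinely fail to share a common real part, a useful sanity check). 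The substance is therefore the range $b\geq a+2$, where freeness fails, Terao's factorization (Theorem \ref{thm:tfact}) no longer applies, and a different mechanism must produce the roots.

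The mechanism I would attempt to build is a three-term recurrence. Using the finite field / $\bZ/m\bZ$ method (Theorem \ref{thm:chpoly}(1) and its modular refinement), one can write $\chi(\A_\Phi^{[-a,b]}, q)$ as a count of points of $V\otimes\bZ/q\bZ$ on which every positive coroot takes a value outside a window of $a+b+1$ consecutive residues; peeling off the outermost families of parallel hyperplanes via the deletion--restriction identity $\chi(\A)=\chi(\A')-\chi(\A'')$ should then relate $\chi(\A_\Phi^{[-a,b]}, t)$ to the characteristic polynomials of arrangements of smaller width, with the Shi and Catalan polynomials serving as initial data. The key hope is that, after the substitution $s=t-\tfrac{(a+b+1)h}{2}$ centring the conjectural critical line at the origin, the resulting recursion is one satisfied by a family $\{p_m(s)\}$ of polynomials with real coefficients and the sign pattern of an orthogonal family; the classical Hermite--Biehler/Sturm argument --- equivalently, exhibiting $p_m$ as the characteristic polynomial of a Jacobi matrix with purely imaginary off-diagonal entries --- then forces the roots of $p_m$ to be purely imaginary, which is exactly the assertion $\Re t=\tfrac{(a+b+1)h}{2}$. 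For type $A$ one can try to shortcut this step: identify $\chi(\A_{A_{n-1}}^{[-a,b]}, t)$ explicitly, along the lines of Athanasiadis's computations, with a specialization of a classical hypergeometric orthogonal polynomial and invoke its known zero distribution.

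The principal obstacle is that deletion--restriction does not close within the family $\{\A_\Phi^{[-a,b]}\}$: the restriction $\A''$ of a truncated affine Weyl arrangement to one of its hyperplanes is a multiarrangement of a genuinely different combinatorial type --- and, for exceptional $\Phi$, of no classical type at all --- so making the recursion self-contained is itself substantial, and proving positivity of its coefficients uniformly in $a$, $b$ and $\Phi$ is a second, independent difficulty. This is also the point at which the theory of \S\ref{sec:prim} should enter. Indeed \S\ref{subsec:er} identifies the Shi and Catalan multirestrictions with exactly the Coxeter multiarrangements $(\A(W),2k)$ and $(\A(W),2k+1)$ sitting inside the Hodge filtration of Theorem \ref{thm:coxetermulti}, so one expects the arrangements of intermediate width to correspond to intermediate objects built from the primitive derivation, with the operator $\nabla_D$ itself producing the recurrence and hence the orthogonality. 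Lacking such a structural explanation, the fallback would be a type-by-type check from explicit characteristic polynomials, which, even if carried through, would leave the uniform phenomenon unexplained.
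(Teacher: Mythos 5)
There is nothing in the paper to compare your argument with: the statement you were given is Conjecture \ref{conj:rh}, which the paper explicitly records as open, the only evidence cited being Athanasiadis's combinatorial verification for types $ABC$ and $D$ and the case $b=a+1$, which follows from the freeness result Theorem \ref{thm:er} (ii) via Corollary \ref{cor:er} (ii). Your treatment of the settled cases is accurate and consistent with the paper -- for $b=a+1$ one indeed gets $\chi(\A_\Phi^{[-a,a+1]},t)=\bigl(t-(a+1)h\bigr)^\ell$, and $a=b$ is correctly excluded since the Catalan roots $e_i+ah$ do not share a real part -- and your suggestion that the proof should run through $D(\A)$ and the primitive-derivation/Hodge-filtration structures of \S\ref{sec:prim} is exactly the direction the paper itself proposes as a Problem.

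That said, what you have written is a research programme, not a proof, and you say so yourself; graded as a proof of the statement it has an unfilled gap at precisely the two places where the difficulty lives. First, the deletion--restriction recursion does not close: restricting $\A_\Phi^{[-a,b]}$ to one of its hyperplanes leaves the family $\{\A_\Psi^{[-a,b]}\}$ altogether (note also that for the $\chi$-recursion the restriction $\A''$ is the \emph{simple} induced arrangement, not a multiarrangement -- multiplicities enter only in the freeness arguments of \S\ref{subsec:multi} -- and the hyperplanes are indexed by roots $\alpha\in\Phi^+$, not coroots), so the hoped-for three-term recurrence is not exhibited. Second, even granting such a recurrence, the Hermite--Biehler/Jacobi-matrix step requires a positivity or interlacing property that is nowhere established, and for $b\geq a+2$ there is no freeness, no Terao factorization, and no known algebraic structure (the Remark at the end of \S\ref{sec:appl} records that the most natural guess, a self-duality of $D_0(c\A_\Phi^{[-a,b]})$ up to shift, already fails in examples). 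So the conjecture remains open, and your proposal should be read as a plausible but unproven strategy rather than a solution.
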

This conjecture has been verified for types $ABC$ and $D$ by 
Athanasiadis \cite{ath-lin}. We also note that for the parameters 
$b=a+1$, it is a special case of Theorem \ref{thm:er} (ii). Generally 
it is still an open problem. Conjecture \ref{conj:rh} implies that 
the roots of the characteristic polynomial $\chi(\A_\Phi^{[-a,b]}, t)$ 
sit on the line of complex numbers whose real part is 
$\Real=\frac{(a+b+1)h}{2}$, which concludes the following 
nontrivial property of the characteristic polynomial. 
\begin{conj}
\label{conj:fe}
(``Functional Equation'' by Postnikov-Stanley, \cite{ps-def}) 
If $a, b$ are integers such that $-1\leq a\leq b$ (except for 
$(a,b)=(-1,0)$ and $(-1,-1)$), then the characteristic 
polynomial satisfies 
\begin{equation}
\label{eq:fe}
\chi(\A_\Phi^{[-a,b]}, (a+b+1)h-t)=
(-1)^\ell \chi(\A_\Phi^{[-a,b]}, t). 
\end{equation}
\end{conj}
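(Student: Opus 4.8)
The plan is to attack Conjecture~\ref{conj:fe} directly rather than through the stronger Conjecture~\ref{conj:rh}, although it is worth recording that the latter implies the former: if every root of $\chi(\A_\Phi^{[-a,b]},t)$ lies on the line $\Real t=c$ with $c=\tfrac{(a+b+1)h}{2}$, then, since $\chi$ has real coefficients, the root multiset is stable under complex conjugation, hence under $t\mapsto 2c-t$, and comparing leading coefficients yields exactly $\chi(\A_\Phi^{[-a,b]},2c-t)=(-1)^\ell\chi(\A_\Phi^{[-a,b]},t)$. As Conjecture~\ref{conj:rh} is itself open outside types $A$--$D$ (where it is a theorem of Athanasiadis~\cite{ath-lin}, so Conjecture~\ref{conj:fe} holds there too), one wants a type-uniform argument for the weaker symmetry. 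Two infinite sub-families are already covered by \S\ref{sec:appl}: the extended Shi arrangement $\A_\Phi^{[1-k,k]}$ has $a+b+1=2k$ and $\chi=(t-kh)^\ell$ by Corollary~\ref{cor:er}(ii), visibly $(-1)^\ell$-symmetric about $kh=\tfrac{(a+b+1)h}{2}$; the extended Catalan arrangement $\A_\Phi^{[-k,k]}$ has $a+b+1=2k+1$ and $\chi=\prod_{i=1}^\ell(t-e_i-kh)$ by Corollary~\ref{cor:er}(i), where the required palindromy of $\{e_i+kh\}$ about $\tfrac{(2k+1)h}{2}$ is the classical identity $e_i+e_{\ell+1-i}=h$ for Weyl exponents. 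Thus Conjecture~\ref{conj:fe} holds whenever $\A_\Phi^{[-a,b]}$ is free, and for these families it is a consequence of Terao's factorization theorem (Theorem~\ref{thm:tfact}) together with the exponent palindromy and the $kh$-shift of the primitive derivation (Theorem~\ref{thm:coxetermulti}).

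For the remaining, generically non-free, cases the natural tool is the finite field method. By Athanasiadis's theorem there is an $N=N(\Phi,a,b)$ such that for $m>N$ with $\gcd(m,N)=1$,
\[
\chi(\A_\Phi^{[-a,b]},m)=\#\bigl\{x\in(\bZ/m\bZ)^\ell\bigm| \alpha(x)\bmod m\notin\{-a,-a+1,\dots,b\}\ \text{for all }\alpha\in\Phi^+\bigr\},
\]
so $\chi(\A_\Phi^{[-a,b]},m)$ counts points of the torus $(\bZ/m\bZ)^\ell$ that avoid, for every positive root $\alpha$, the ``slab'' $\{x:\alpha(x)\in[-a,b]\}$ of $a+b+1$ consecutive residues. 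Since both sides of~\eqref{eq:fe} are polynomials in the relevant variable, Conjecture~\ref{conj:fe} is equivalent to the numerical identity $\chi(\A_\Phi^{[-a,b]},(a+b+1)h-m)=(-1)^\ell\chi(\A_\Phi^{[-a,b]},m)$ for $m$ in a suitable arithmetic progression, i.e.\ to a combinatorial reciprocity for the slab-avoiding count under $m\mapsto (a+b+1)h-m$. The most promising vehicle is Ehrhart--Macdonald reciprocity in the inside-out-polytope form of Beck--Zaslavsky: realize the count, on each residue class of $m$, as the closed Ehrhart quasi-polynomial of a pair (a fundamental box of the torus together with the induced affine sub-arrangement), so that the sign $(-1)^\ell$ is precisely the passage from the closed to the open count; the crux is then to match the open count at parameter $m$ with the closed count at parameter $(a+b+1)h-m$, which is where the Coxeter number must enter. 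As an auxiliary device I would build the slab region by inserting and deleting the hyperplanes $\alpha=-a-1$ and $\alpha=b+1$ in symmetric pairs (passing from $[-a,b]$ to $[-a-1,b+1]$, which translates the reflection point by $h$) so as to set up an induction on $b-a$ that respects the symmetry.

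The main obstacle, I expect, is type-uniformity together with the exact location $\tfrac{(a+b+1)h}{2}$ of the reflection. In type $A$ the slab-avoiding count reduces to an essentially elementary problem about cyclic words, which is what Athanasiadis exploits; in the other types no such reformulation is known, and one needs a genuinely root-theoretic reason why the count is symmetric and why the symmetry point is a half-integer multiple of the Coxeter number and not of some other invariant. A related obstacle is that deletion--restriction is useless for a naive induction on rank: in $\chi(\A,t)=\chi(\A',t)-\chi(\A'',t)$ the restriction $\A''$ is a truncated affine Weyl arrangement of a proper subsystem whose reflection point generally does not match that of $\A$, which is why one is forced into the symmetric-pair procedure above and into controlling its error terms. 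Finally, a more speculative route --- the one the end of \S\ref{sec:appl} gestures at --- would be to realize $\chi(\A_\Phi^{[-a,b]},t)$ through a Solomon--Terao-type formula applied to a module manufactured from the Hodge filtration of \S\ref{sec:prim}, and to read~\eqref{eq:fe} off from the self-duality $D(\A,2k-\m)\simeq\Omega^1(\A,\m)[-kh]$ of Theorem~\ref{thm:coxetermulti}(i); but producing the right module for a non-free truncation, and proving such a formula, is itself open.
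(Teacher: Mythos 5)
The statement you were given is labelled a \emph{conjecture} in the paper, and the paper contains no proof of it: it records only that \eqref{eq:fe} is known in types $A$--$D$ by Athanasiadis's work, that for $b=a+1$ it is a special case of the freeness of the extended Shi arrangement, and that for $a=b\geq 0$ it reduces to the duality of exponents \eqref{eq:dual}. The partial results you establish are correct and coincide exactly with what the paper asserts: the implication from Conjecture \ref{conj:rh} (roots on a vertical line, closed under conjugation, hence stable under $t\mapsto (a+b+1)h-t$), and the two free families via Corollary \ref{cor:er}, Terao's factorization (Theorem \ref{thm:tfact}), and $e_i+e_{\ell+1-i}=h$. Up to that point there is nothing to object to.

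What you offer for the general non-free case, however, is a research programme rather than a proof, and you say so yourself; the conjecture remains open after your argument. The gap sits exactly at the step you call the crux. Ehrhart--Macdonald reciprocity (in the inside-out form) relates the closed count at $m$ to $(-1)^\ell$ times the open count of the \emph{same} configuration at $-m$; to obtain \eqref{eq:fe} you must further identify the count at $-m$ with the count at $(a+b+1)h-m$, i.e.\ exhibit an affine symmetry of $(\bZ/m\bZ)^\ell$ carrying the slab configuration $\{x\mid \alpha(x)\in[-a,b]\}$ to its shift by $(a+b+1)h$ uniformly in $\Phi$. Producing that symmetry, with the Coxeter number entering for a root-theoretic reason rather than by case inspection, \emph{is} the content of the conjecture, and no candidate is supplied. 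The symmetric-pair induction on $b-a$ likewise does not close: its base cases are the free families already handled, and the inductive step via Conjecture \ref{conj:h-shift} assumes another open statement, while deletion--restriction introduces lower-rank restrictions whose reflection points do not match, as you note without resolving. In short, your proposal proves the special cases the paper already knows and correctly locates the obstruction, but it does not prove the statement --- nor does the paper, which presents it as open.
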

Note that the ``Functional Equation'' is true when $a=b\geq 0$. 
Indeed, in this case $\chi(\A_\Phi^{[-a,a]},t)=\prod_{i=1}^\ell
(t-e_i-ah)$. The relation (\ref{eq:fe}) is equivalent to 
the relation so-called duality of exponents: 
\begin{equation}
\label{eq:dual}
e_i+e_{\ell+1-i}=h, 
\end{equation}
$i=1, \dots, \ell$. Thus the ``Functional Equation'' can be 
considered as a generalization of the duality of exponents. 

The following is also observed in the work by Athanasiadis 
\cite{ath-lin, ath-gen}. 

\begin{conj}
\label{conj:h-shift}
If $a, b$ are integers such that $-1\leq a\leq b$ (except for 
$(a,b)=(-1,0)$ and $(-1,-1)$), 
then the characteristic 
polynomial satisfies 
\begin{equation}
\label{eq:hshift}
\chi(\A_\Phi^{[-a-1,b+1]},t)=
\chi(\A_\Phi^{[-a,b]},t-h). 
\end{equation}
\end{conj}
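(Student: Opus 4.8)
The plan is to deduce Conjecture \ref{conj:h-shift} from a freeness statement together with Terao's factorization theorem. Concretely, one would try to prove that for every pair of integers $-1\le a\le b$ the cone $c\A_\Phi^{[-a,b]}$ is free, with exponents $(1,d_1,\dots,d_\ell)$, where $\ell=\operatorname{rank}\Phi$ and $(d_1,\dots,d_\ell)=\exp(\A(W),a+b+1)$ is the exponent sequence of the constant-multiplicity Coxeter multiarrangement $(\A(W),a+b+1)$. Granting this, Theorem \ref{thm:tfact} gives $\chi(\A_\Phi^{[-a,b]},t)=\prod_{i=1}^\ell(t-d_i)$; and by Theorem \ref{thm:coxetermulti}(i) one has $D(\A(W),a+b+3)\simeq D(\A(W),a+b+1)[-h]$, so $\exp(\A(W),a+b+3)$ is obtained from $\exp(\A(W),a+b+1)$ by raising every entry by exactly $h$. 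The identity $\chi(\A_\Phi^{[-a-1,b+1]},t)=\chi(\A_\Phi^{[-a,b]},t-h)$ is then immediate, and, combined with the duality $e_i+e_{\ell+1-i}=h$ of Coxeter exponents, the ``functional equation'' of Conjecture \ref{conj:fe} drops out as well. Note that this freeness statement already subsumes the known cases: $b=a$ is Theorem \ref{thm:er}(i) and $b=a+1$ is Theorem \ref{thm:er}(ii).

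To establish the freeness one would run an induction on $\ell$, restricting $c\A_\Phi^{[-a,b]}$ to the hyperplane at infinity $H_0$ exactly as in the proof of Theorem \ref{thm:er}. The Ziegler multirestriction $\big((c\A_\Phi^{[-a,b]})^{H_0},\m^{H_0}\big)$ is again the constant multiarrangement $(\A(W),a+b+1)$, which is free for \emph{every} value of $a+b+1$ by Theorem \ref{thm:coxetermulti}(ii),(iii). For $\ell\ge 3$ one invokes Theorem \ref{thm:char4}: its second hypothesis is thus automatic, so it only remains to verify local freeness of $c\A_\Phi^{[-a,b]}$ along $H_0$; but the localizations along $H_0$ are, up to trivial factors, truncated affine Weyl arrangements of the same shape associated to proper parabolic subsystems $\Phi'\subsetneq\Phi$, hence free by the inductive hypothesis. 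This reduces the whole problem to the rank-two base case $\ell=2$, i.e. $\Phi\in\{A_1\times A_1,A_2,B_2,G_2\}$.

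The main obstacle is precisely this rank-two case, where Theorem \ref{thm:char4} no longer applies and one has to use Theorem \ref{thm:char3} instead: writing $\chi(\A_\Phi^{[-a,b]},t)=(t-1)(t^2-b_1t+b_2)$ and $\exp(\A(W),a+b+1)=(d_1,d_2)$, freeness of $c\A_\Phi^{[-a,b]}$ becomes the single numerical identity $b_2=d_1d_2$, equivalently the surjectivity of the restriction map $\rho\colon D_1(c\A_\Phi^{[-a,b]})\to D(\A(W),a+b+1)$. Proving $b_2=d_1d_2$ for \emph{all} $a\le b$ is an infinite family of assertions about characteristic polynomials; the natural attack is the finite-field method (Theorem \ref{thm:chpoly}(1), in its mod-$m$ form), counting lattice points that avoid the forbidden affine strips. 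This is, in effect, how Athanasiadis handled the classical types $A,B,C,D$, which already covers $A_1\times A_1$, $A_2$ and $B_2$, leaving little more than an explicit computation for $G_2$. What is genuinely missing — and where I expect the real work to lie — is a \emph{uniform}, type-independent argument: one would like to use the primitive-derivation structure of \S\ref{sec:prim} to write down explicitly a lift through $\rho$ of a free basis of $D(\A(W),a+b+1)$, that is, a family of vector fields logarithmic for $c\A_\Phi^{[-a,b]}$ restricting to that basis, which would settle the surjectivity — hence the freeness, hence Conjecture \ref{conj:h-shift} — in one stroke.
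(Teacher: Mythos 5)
First, note that Conjecture \ref{conj:h-shift} is stated in the paper as an \emph{open conjecture}: no proof is given, so there is nothing to match your argument against. More importantly, the strategy you propose cannot work, because the freeness statement on which it rests is false. You want to show that $c\A_\Phi^{[-a,b]}$ is free for all integers $-1\leq a\leq b$ and then apply Terao's factorization theorem (Theorem \ref{thm:tfact}). But Terao's theorem forces the characteristic polynomial of a free arrangement to split into linear factors over $\bZ$, and the paper points out immediately after the conjecture that this fails for every $[a,b]$ other than $[-k,k]$ and $[1-k,k]$: e.g.\ $\chi(\A_{A_3}^{[1,1]},t)=(t-2)(t^2-4t+7)$, whose quadratic factor has non-real roots $2\pm i\sqrt{3}$ (this is exactly the content of the ``Riemann hypothesis'' of Conjecture \ref{conj:rh} --- the roots are genuinely complex with common real part $\frac{(a+b+1)h}{2}$). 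Here $(a,b)=(-1,1)$ is an admissible pair for the conjecture, yet $c\A_{A_3}^{[1,1]}$ is not free. So your induction is doomed from the start: the statement you are inducting on is false already in rank $3$, and the numerical identity $b_2=d_1d_2$ you hope to verify in the rank-$2$ base case likewise fails for these Linial-type parameters.

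The part of your proposal that does point in the right direction is the observation that the Ziegler multirestriction of $c\A_\Phi^{[-a,b]}$ to the hyperplane at infinity is the constant multiarrangement $(\A(W),a+b+1)$, and that by Theorem \ref{thm:coxetermulti}(i) passing from multiplicity $a+b+1$ to $a+b+3$ shifts all exponents by exactly $h$. This is precisely the structural evidence the paper cites (in the closing Remark of \S\ref{sec:appl}) in support of the conjecture. But the correct way to exploit it is not through freeness: one would instead seek a graded isomorphism $D_0(c\A_\Phi^{[-a-1,b+1]})\simeq D_0(c\A_\Phi^{[-a,b]})[-h]$ of (non-free) modules of logarithmic vector fields and deduce (\ref{eq:hshift}) from Solomon--Terao's formula (Theorem \ref{thm:stf}), which computes $\chi$ from the Hilbert series of these modules without any freeness hypothesis. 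Even that refined statement remains conjectural (and its companion, the self-duality isomorphism, is reported to fail in some cases), so the gap in your argument is not a repairable detail but the choice of an approach that contradicts known examples.
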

Except for $[a,b]=[-k,k]$ and $[1-k,k]$, the characteristic polynomial 
$\chi(\A_\Phi^{[a,b]}, t)$ can not be decomposed into linear terms. So 
the cone $c\A_\Phi^{[a,b]}$ is no more free. 
The simplest such example may be $\Phi=A_3$ ($h=4$) 
with $(a,b)=(-1,1)$. 
More explicitly, after change of coordinates, 
\begin{equation*}
\begin{split}
Q(\A_{A_3}^{[1,1]})&=(x-1)(y-1)(z-1)(x+y-1)(y+z-1)(x+y+z-1)\\
Q(\A_{A_3}^{[0,2]})&=
\prod_{k=0}^2
(x-k)(y-k)(z-k)(x+y-k)(y+z-k)(x+y+z-k). 
\end{split}
\end{equation*}
Then 
$\chi(\A_{A_3}^{[1,1]},t)=(t-2)(t^2-4t+7)$ and 
$\chi(\A_{A_3}^{[0,2]},t)=(t-6)(t^2-12t+39)$. 
It is easily seen 
that roots have the real part $2$, respectively $6$, and 
$
\chi(\A_{A_3}^{[0,2]},t)=
\chi(\A_{A_3}^{[1,1]},t-4)$. 

It seems to be interesting 
to investigate these conjectures through the module $D(\A)$ 
of logarithmic vector fields. 

\begin{pbm}
Prove the above conjectures by using $D(\A)$. Is it possible to 
refine these conjectures in terms of algebraic/geometric structures 
of the module of logarithmic vector fields? 
\end{pbm}

\begin{rem}
Recall that 
$D_0(\A_\Phi^{[a,b]})\simeq D(\A_\Phi^{[a,b]})/S\cdot\theta_E$. 
In the lecture in Pau (June 2012), 
the author asked whether or not 
if the following isomorphisms hold  
\begin{equation*}
\begin{split}
D_0(c\A_\Phi^{[-a,b]})^\lor&\simeq
D_0(c\A_\Phi^{[-a,b]})[(a+b+1)h]\\
D_0(c\A_\Phi^{[-a-1,b+1]})&\simeq
D_0(c\A_\Phi^{[-a,b]})[-h], 
\end{split}
\end{equation*}
which induce Conjecture \ref{conj:fe} and Conjecture 
\ref{conj:h-shift}, respectively via Solomon-Terao's formula 
Theorem \ref{thm:stf} (see \cite{ay-quasi} for details). 
These seem to be strongly supported 
by Theorem \ref{thm:coxetermulti}. 
However Professor D. Faenzi 
pointed out to us (October 2012) that the first isomorphism 
$D_0(c\A_\Phi^{[-a,b]})^\lor\simeq
D_0(c\A_\Phi^{[-a,b]})[(a+b+1)h]$ does not hold at least 
for some cases. 
\end{rem}




\end{document}